\numberwithin{equation}{section}
\begin{document}
	\renewcommand{\contentsname}{Table of Contents}
	\renewcommand{\abstractname}{Abstract}
	
	\title{A Viscosity Method for the Min-Max Construction of Closed Geodesics}
	\author{Alexis Michelat\footnote{Department of Mathematics, ETH Zentrum, CH-8093 Zürich, Switzerland.}\; and Tristan Rivière$^{*}$\setcounter{footnote}{0}}
	\date{\today} 
	
	\maketitle
	
	\begin{center}
		\textit{Dedicated to Jean-Michel Coron at the occasion of his $60^{th}$ anniversary}
	\end{center}
	
	\vspace{1.5em}
	
	\begin{abstract}
		We present a viscosity approach to the min-max construction of closed geodesics on compact Riemannian manifolds of arbitrary dimension. We also construct counter-examples in dimension $1$ and $2$ to the $\varepsilon$-regularity in the convergence procedure. Furthermore, we prove the lower semi-continuity of the index of our sequence of critical points converging towards a closed non-trivial geodesic.
	\end{abstract}
	
	\tableofcontents
	
	\makeatletter
	\tracinggroups=1
	\theoremstyle{plain}
	\newtheorem{theorem}{Theorem}[section]
	\newtheorem{theoremdef}{Théorème-Définition}[section]
	\newtheorem{lemme}[theorem]{Lemma}
	\newtheorem{propdef}[theorem]{Définition-Proposition}
	\newtheorem{prop}[theorem]{Proposition}
	\newtheorem{cor}[theorem]{Corollary}
	\theoremstyle{definition}
	\newtheorem{defi}[theorem]{Definition}
	\newtheorem{rem}[theorem]{Remark}
	\newtheorem{exemple}[theorem]{Example}
	\newcommand{\N}{\ensuremath{\mathbb{N}}}
	\renewcommand\hat[1]{%
		\savestack{\tmpbox}{\stretchto{%
				\scaleto{%
					\scalerel*[\widthof{\ensuremath{#1}}]{\kern-.6pt\bigwedge\kern-.6pt}%
					{\rule[-\textheight/2]{1ex}{\textheight}}
				}{\textheight}%
			}{0.5ex}}%
		\stackon[1pt]{#1}{\tmpbox}%
	}
	\parskip 1ex
	\renewcommand{\H}{\ensuremath{\mathbb{H}}}
	\newcommand{\vc}[3]{\overset{#2}{\underset{#3}{#1}}}
	\newcommand{\conv}[1][]{\ensuremath{\xrightarrow[n\rightarrow\infty]{#1}}}
	\newcommand{\cn}{\ensuremath{\mathrm{cn}}}
	\newcommand{\sn}{\ensuremath{\mathrm{sn}}}
	\newcommand{\dn}{\ensuremath{\mathrm{dn}}}
	\newcommand{\A}{\ensuremath{\mathscr{A}}}
	\newcommand{\D}{\ensuremath{\nabla}}
	\renewcommand{\N}{\ensuremath{\mathbb{N}}}
	\newcommand{\Z}{\ensuremath{\mathbb{Z}}}
	\newcommand{\K}{\ensuremath{\mathscr{K}}}
	\newcommand{\R}{\ensuremath{\mathbb{R}}}
	\newcommand{\Q}{\ensuremath{\mathbb{Q}}}
	\newcommand{\C}{\ensuremath{\mathbb{C}}}
	\newcommand{\I}{\ensuremath{\mathbb{I}}}
	\renewcommand{\P}{\ensuremath{\mathscr{P}}}
	\newcommand{\Res}{\ensuremath{\mathrm{Res}}}
	\newcommand{\res}{\ensuremath{\mathcal{R}}}
	\newcommand{\fond}{\ensuremath{\mathbb{I}}}
	\newcommand{\lp}[2]{\ensuremath{\mathrm{L}^{#1}(#2)}}
	\newcommand{\W}[2]{\ensuremath{\mathrm{W}^{#1}(#2)}}
	\newcommand{\WW}{\ensuremath{\mathrm{W}^{2,2}(S^1,M)}}
	\newcommand{\ww}{\ensuremath{\mathrm{W}^{2,2}(S^1,M)}}
	\newcommand{\wt}{\ensuremath{\mathrm{W}_{u}^{2,2}(S^1,TM)}}
	\newcommand{\wtn}{\ensuremath{\mathrm{W}_{u_n}^{2,2}(S^1,TM)}}
	\newcommand{\im}{\ensuremath{\mathrm{W}_{\iota}^{2,2}(S^1,M)}}
	\renewcommand{\wp}[1]{\enuremath{\dot{W}^{#1}(\R^n)}}
	\newcommand{\nw}[2]{\ensuremath{\left\Vert #1\right\Vert_{\mathrm{W}^{#2}}}}
	\newcommand{\nwp}[3]{\ensuremath{\left\Vert #1\right\Vert_{\mathrm{W}^{#2}(#3)}}}
	\newcommand{\np}[3][S^1]{\ensuremath{\left\Vert #2\right\Vert_{\mathrm{L}^{#3}(#1)}}}
	\newcommand{\npl}[2]{\ensuremath{\left\Vert #1\right\Vert_{\mathrm{L}^{#2}([0,L_n])}}}
	\newcommand{\npr}[2][2]{\ensuremath{\left\Vert #2\right\Vert_{\mathrm{L}^{#1}(C_r)}}}
	\newcommand{\nprj}[2][2]{\ensuremath{\left\Vert #2\right\Vert_{\mathrm{L}^{#1}(C_{2^jR})}}}
	\newcommand{\NP}[3]{\ensuremath{\left\Vert #1\right\Vert_{\mathrm{L}^{#2}(#3)}}}
	\newcommand{\so}[2]{\ensuremath{\mathrm{H}^{#2,#1}(\R^n)}}
	\newcommand{\ns}[2][s]{\ensuremath{\left\Vert #2 \right\Vert_{\dot{H}^{#1}(\R^n)}}}
	\newcommand{\jap}[1]{\ensuremath{(1+|\xi|^2)^{#1}}}
	\newcommand{\sch}{\ensuremath{\mathscr{S}}}
	\newcommand{\h}{\ensuremath{\mathscr{H}}}
	\newcommand{\hh}[1][s]{\ensuremath{\dot{H}^{#1}(\R^n)}}
	\newcommand{\hhh}{\ensuremath{\dot{H^1}(\R^n)\cap\dot{H}^s(\R^n)}}
	\renewcommand{\Re}{\ensuremath{\mathrm{Re}\,}}
	\renewcommand{\Im}{\ensuremath{\mathrm{Im}\,}}
	\newcommand{\diam}{\ensuremath{\mathrm{diam}\,}}
	\newcommand{\leb}{\ensuremath{\mathscr{L}}}
	\newcommand{\Hom}{\ensuremath{\mathrm{Hom}}}
	\newcommand{\supp}{\ensuremath{\mathrm{supp}\,}}
	\renewcommand{\phi}{\ensuremath{\varphi}}
	\renewcommand{\epsilon}{\ensuremath{\varepsilon}}
	\renewcommand{\bar}{\ensuremath{\overline}}
	\newcommand{\scal}[2]{\ensuremath{\left\langle #1,#2\right\rangle}}
	\newcommand{\norm}[1]{\ensuremath{\left\Vert #1\right\Vert}}
	\newcommand{\ens}[1]{\ensuremath{\left\{ #1\right\}}}
	\newcommand{\lie}[2]{\ensuremath{\left[#1,#2\right]}}
	\newcommand{\ffg}{\ensuremath{\vec{\Phi}}}
	\newcommand{\g}{\ensuremath{g_{\ffg}}}
	\newcommand{\vg}{\ensuremath{\mathrm{vol}_{\g}}}
	\newcommand{\inj}{\ensuremath{\mathrm{inj}}}
	\renewcommand{\qedsymbol}{}
	\renewcommand{\mathring}{\ensuremath{\mathrm}}
	\renewcommand{\refname}{References}
	\renewcommand{\abstractname}{Abstract}
	\renewcommand{\proofname}{Proof}
\renewcommand*{\thefootnote}{\arabic{footnote}}
\newpage

 \addtocontents{toc}{\protect\setcounter{tocdepth}{1}}

\section{Introduction}
\subsection{General Framework}

This article intends at motivating the approach developed in \cite{maxou} in the simpler case of the construction of closed geodesics. We present first the general framework of the kind of problems this method aims at tackling. 

Suppose we want to construct a critical point of a $C^1$ function $f:X\rightarrow \R_+$, where $X$ is a complete $C^{1,1}$ Finsler manifold, which we interpret as the energy of a geometric or physical problem. A critical point $x\in X$ of $f$ is non-trivial if its energy is positive, \textit{i.e.} if $Df(x)=0$ and $f(x)>0$. If\, $\inf f(X)=0$, we cannot simply minimise $f$ to create a non-trivial critical point, so we use a so-called min-max construction. Let us denote $\P^*(X)=\P(X)\setminus \ens{\varnothing}$ the set of non-empty subsets of $X$. If $\mathscr{A}\subset \mathscr{P}^*(X)$, we define the min-max
\begin{align*}
\beta=\inf_{A\in\mathscr{A}}\sup_{x\in A}f(x).
\end{align*}
Thanks of general theorem such as "mountain pass" (see for example \cite{struwevar}), if $\beta<\infty$ and if the function $f$ satisfies the Palais-Smale condition on $X$, under suitable assumptions on $\mathscr{A}$, $\beta$ is a critical value of $f$. So if $\beta>0$, we get a non-trivial critical point of $f$. We recall that $f$ satisfies the Palais-Smale condition at $c\in \R$ if for all sequence $\ens{x_n}_{n\in\N}\subset X$, such that
\begin{align*}
f(x_n)\conv c,\quad\text{and}\quad Df(x_n)\conv 0,
\end{align*}
there exists $x\in X$ and a subsequence of $\ens{x_n}_{n\in\N}$ converging to $x$. 
In general, a lack of coerciveness can prevent the energy to verify the Palais-Smale condition. The viscosity method consists in approximating $f$ by a function satisfying the Palais-Smale condition. If $g:X\rightarrow\R_+$ is $C^1$, we set, for all $\sigma\geq 0$,
\begin{align*}
f_\sigma(x)=f(x)+\sigma^2 g(x),
\end{align*}
and define
\begin{align*}
\beta(\sigma)=\inf_{A\in\mathscr{A}}\sup_{x\in A}f_\sigma(x).
\end{align*}
If for all $\sigma>0$, $f_\sigma$ verifies the Palais-Smale condition, and $\beta(\sigma)<\infty$, then we can get a critical point $x_\sigma\in X$ such that
\begin{align*}
f_\sigma(x_\sigma)=\beta(\sigma).
\end{align*}
We can easily see that
\begin{align*}
\beta(\sigma)\underset{\sigma\rightarrow 0}{\longrightarrow} \beta(0)>0,
\end{align*}
and at this point if we can construct a sequence of positive numbers $\ens{\sigma_n}_{n\in\N}$, and a sequence $\ens{x_{\sigma_n}}_{n\in\N}$ of critical points associated to $\ens{f_{\sigma_n}}_{n\in\N}$ such that
\begin{align*}
\sigma_n^2\, g(x_{\sigma_n})\conv 0,
\end{align*}
if we manage to extract a subsequence of $\ens{x_{\sigma_n}}_{n\in\N}$ converging strongly enough to an element $x\in X$, such that
\begin{align*}
f(x_{\sigma_n})\conv f(x),\;\text{ and }\; Df(x_{\sigma_n})\conv Df(x), 
\end{align*}
then $x\in X$ will be critical point of $f$ of non-trivial energy $\beta=\beta(0)>0$. 

 One new feature of our work is the absence of $\epsilon$-regularity, as the counter-examples shows in section \ref{counterexamples}. The convergence is assured instead by the existence of a quasi-conservation law.

\subsection{Construction of Closed Geodesics}

The problem of the construction of closed geodesics on compact manifolds is an ancient problem which has stimulated great developments in the field of calculus of variations, dynamical systems (\cite{bangert}, \cite{Geodesicsannulushomeomorphisms}) and algebraic topology (\cite{bott1}, \cite{sullivan}). After the pioneering work of Hadamard (\cite{hadamard}) and Poincaré (\cite{poinca}), the first existence results on $2$-dimensional spheres equipped with arbitrary metric were obtained by Birkhoff in 1917 (\cite{birkhoff}) and in 1927 for the general case of spheres of higher dimension (we refer to \cite{coldingminicozzi1} for a modern proof). We refer to \cite{Poincare}, \cite{type} and \cite{Openproblems} for historical developments, and to \cite{bott1} for a more mathematical treatment of the subject.

Let $(M^m,g)$ a compact Riemannian manifold of class $C^\nu$ ($\nu\geq 3$). According to the notations in the beginning of the introduction, we have $X=\im$, where
\renewcommand*{\thefootnote}{\fnsymbol{footnote}}
\begin{equation}
\im=\mathrm{W}^{2,2}(S^1,M)\cap\ens{u:u(t)\in M,\;\text{and}\; \dot{u}(t)\neq 0 \;\text{for all}\; t\in S^1}\footnote{Note that this makes sense thanks of the Sobolev imbedding $\mathrm{W}^{2,2}(S^1,M)$.}
\end{equation}
\renewcommand*{\thefootnote}{\arabic{footnote}}
\setcounter{footnote}{0}
and $f=\mathfrak{L}$, the length of curve, such that for all $u\in\im$,
\begin{align*}
\mathfrak{L}(u)=\int_{S^1}\left|\dot{u}\right|d\leb^1.
\end{align*}
and $g=\kappa^2$, where
\begin{align*}
\kappa(u)=\left|\D_{\frac{\dot{u}}{|\dot{u}|}}\frac{\dot{u}}{|\dot{u}|}\right|
\end{align*}
is the geodesic curvature of a curve $u\in \im$. We then consider for all $\sigma\geq 0$, the energy $E_\sigma:\im\rightarrow \R$, defined for all $u\in\im$ by
\begin{align*}
E_\sigma(u)=\int_{S^1}(1+\sigma^2\kappa^2(u))|\dot{u}| d\leb^1.
\end{align*}
Of course, we can replace $f$ by the Dirichlet energy, which verifies the Palais-Smale condition : it is a classical way to construct a closed geodesic on compact manifolds (see for instance \cite{struwevar}). However, we are interested in the application of this method to the min-max construction of minimal surfaces, and the Dirichlet energy does not satisfy any more the Palais-Smale condition in dimension $2$. Therefore it makes sense to consider first a simpler case, to see if the method works correctly, and where are the difficulties. Indeed, there are three issues that we might encounter. 

Firstly, we need to construct an appropriate min-max method, giving a $\beta(0)>0$. Secondly, if $\ens{u_n}_{n\in\N}$ is a sequence of critical points associated to $\ens{E_{\sigma}}$ (where $\ens{\sigma_n}_{n\in\N}$ is a sequence of positive numbers converging to $0$)
\begin{align*}
\liminf_{n\rightarrow\infty}\int_{S^1}\sigma_n^2\kappa^2(u_n)d\leb^1\conv 0.
\end{align*}

Thirdly, passing to the limit in the Euler-Lagrange equation is delicate, as we loose the estimates on the second derivative. 

The first problem can easily be solved, using basic properties of the injective radius of compact manifolds. For the second one, there exist indeed counter-examples, and we use a general technique coming from an article of Michael Struwe (\cite{struweart}) to construct an "entropic" sequence of critical points, in the sense that
\begin{align*}
E_{\sigma_n}(u_n)=\beta(\sigma_n),\quad \text{and}\; \int_{S^1}\sigma^2\kappa^2(u)d\leb^1\leq \frac{1}{\log\frac{1}{\sigma_n}}.
\end{align*}
Finally, the limiting procedure depends on a quasi-conservation law of the Euler-Lagrange equation, corresponding to the general scheme of Noether theorem (see \cite{helein}).

We are almost in the position of stating our main result. We first recall that the index of a critical point $u\in\im$ of $E_\sigma$ ($\sigma\geq 0$ arbitrary) is the dimension of the maximal vector subspace of $\mathrm{W}^{2,2}_u(S^1,M)$ where the second derivative $D^2E_\sigma(u)$ is negative semi-definite.

For the definition of admissible sets and of the families of maps $\mathscr{A},\mathscr{A}_0$, we refer to section \ref{bonnesuite}.

\begin{theorem}\label{mainresult}
	Let $(M^m,g)$ a compact Riemannian manifold of class $C^\nu$, ($\nu\geq 3$). There exists an admissible set $\A$ for $\mathrm{W}^{2,2}(S^1,M)$, a sequence of positive numbers $\ens{\sigma_n}_{n\in\N}$ converging to $0$ and a sequence of critical points $\ens{u_n}_{n\in\N}$ associated to $\ens{E_{\sigma_n}}_{n\in\N}$, such that if we define
	\[
	\beta(\sigma_n)=\inf_{A_0\in\A_0}\sup_{u\in A_0}E_{\sigma_n}(u)<\infty,\quad \beta(0)=\inf_{A\in\A}\sup_{u\in A}\mathfrak{L}(u)>0,
	\]
	then
	\[
	E_{\sigma_n}(u_n)=\beta(\sigma_n), \quad \sigma_n^2\int_{S^1}\kappa^2(u_n)|\dot{u}_n|d\leb^1\leq \frac{1}{\log\frac{1}{\sigma_n}},
	\]
	and 
	\begin{align}\label{strong}
	u_n\overset{\mathrm{L}^\infty}\conv u,\quad  \text{and}\quad\dot{u}_n\overset{a.e.}{\conv } \dot{u}
	\end{align}
	where $u$ is a closed non-trivial $C^\nu$ geodesic of length $\beta(0)>0$. Furthermore, we have lower semi-continuity of the index, \text{i.e.}
	\begin{align*}
	\mathrm{Ind}(u)\leq \liminf_{n\rightarrow\infty}\mathrm{Ind}(u_n).
	\end{align*}
\end{theorem}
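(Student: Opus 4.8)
The plan is to build the proof in four stages, following the general viscosity scheme sketched in the introduction: (1) set up the min-max and produce the numbers $\beta(\sigma)$ and a Palais-Smale-compatible approximation; (2) run Struwe's monotonicity trick to obtain the ``entropic'' critical points $u_n$ with the logarithmic bound on $\sigma_n^2\int\kappa^2|\dot u_n|$; (3) pass to the limit in the Euler-Lagrange equation using a quasi-conservation law; (4) prove lower semi-continuity of the index.

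\medskip

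\textbf{Step 1: The min-max setup.} I would first fix an admissible family $\A$ of subsets of $\WW$ whose min-max value $\beta(0)=\inf_{A\in\A}\sup_{u\in A}\mathfrak L(u)$ is strictly positive. The natural choice (detailed in the referenced section) detects a nontrivial homotopy or homology class of the free loop space; positivity of $\beta(0)$ follows from the standard fact that loops shorter than the injectivity radius of the compact manifold $(M^m,g)$ are contractible, so any sweepout realizing a nontrivial class must contain a loop of definite length. For $\sigma>0$ one forms the perturbed energy $E_\sigma(u)=\int_{S^1}(1+\sigma^2\kappa^2(u))|\dot u|\,d\leb^1$; since the curvature term is coercive with respect to the $\mathrm{W}^{2,2}$ norm, $E_\sigma$ satisfies the Palais-Smale condition on $\im$ for each fixed $\sigma>0$, and a corresponding admissible family $\A_0$ (obtained by reparametrizing/regularizing the sweepouts in $\A$) gives $\beta(\sigma)<\infty$. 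Then mountain-pass/min-max theory on the $C^{1,1}$ Finsler manifold $\im$ produces a critical point of $E_\sigma$ at level $\beta(\sigma)$. Finally one checks $\beta(\sigma)\to\beta(0)>0$ as $\sigma\to0$, since $\beta(\sigma)\geq\beta(0)$ always and $\beta(\sigma)\leq\sup_A\mathfrak L+\sigma^2\sup_A\kappa^2|\dot u|$ on a fixed near-optimal $A$.

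\medskip

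\textbf{Step 2: Entropy via Struwe's argument.} To get the quantitative bound $\sigma_n^2\int_{S^1}\kappa^2(u_n)|\dot u_n|\,d\leb^1\leq 1/\log(1/\sigma_n)$, I would apply the monotonicity argument of \cite{struweart}: the function $\sigma\mapsto\beta(\sigma)$ is monotone and hence differentiable for a.e.\ $\sigma$, and at points of differentiability one can choose near-optimal sweepouts whose sup is attained on curves with controlled curvature energy, quantified by $\beta'(\sigma)$. Comparing $\beta$ to the logarithmically rescaled comparison function forces, along a suitable sequence $\sigma_n\to0$, the bound $\sigma_n^2\int\kappa^2|\dot u_n|\leq 1/\log(1/\sigma_n)$ for the associated critical points $u_n$ with $E_{\sigma_n}(u_n)=\beta(\sigma_n)$. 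In particular $\sigma_n^2\int\kappa^2(u_n)|\dot u_n|\,d\leb^1\to0$, and $\mathfrak L(u_n)\to\beta(0)$, while the lengths $\mathfrak L(u_n)$ are bounded above and below away from $0$.

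\medskip

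\textbf{Step 3: Passage to the limit.} Parametrize each $u_n$ proportionally to arclength on a circle of length $L_n=\mathfrak L(u_n)$, so $|\dot u_n|\equiv L_n$ with $L_n\to\beta(0)$. From the length bound the $u_n$ are uniformly bounded in $\mathrm{W}^{1,\infty}$, hence (after passing to a subsequence) $u_n\to u$ in $\mathrm{L}^\infty$ and weakly-$*$ in $\mathrm{W}^{1,\infty}$, with $u\in\mathrm{W}^{1,\infty}(S^1,M)$. The crucial point, and the \emph{main obstacle}, is that the $\mathrm{W}^{2,2}$ bound degenerates like $\sigma_n^{-2}/\log(1/\sigma_n)$, so one cannot pass to the limit directly in the Euler-Lagrange equation; the standard $\varepsilon$-regularity that would upgrade convergence is unavailable, as the counter-examples in the paper show. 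Instead I would exploit the quasi-conservation law associated (via Noether's theorem, cf.\ \cite{helein}) to the reparametrization invariance of $E_\sigma$: the Euler-Lagrange system can be written in divergence form $\frac{d}{dt}\big(V_n + \sigma_n^2 W_n\big)=(\text{error})_n$, where $V_n$ is the first-order ``momentum'' term and $\sigma_n^2 W_n$ the curvature correction. Using the entropy bound $\sigma_n^2\int\kappa^2|\dot u_n|\leq1/\log(1/\sigma_n)$ together with interpolation, one shows $\sigma_n^2 W_n\to0$ and $(\text{error})_n\to0$ in a distributional sense, so $V_n$ converges to a conserved quantity for the limit; combined with the a.e.\ convergence $\dot u_n\to\dot u$ (extracted from the almost-everywhere compactness forced by the conservation law and the pointwise bound on $|\dot u_n|$), this identifies $u$ as a weak, hence by elliptic bootstrap a classical $C^\nu$, geodesic. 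The length is preserved because $\mathfrak L(u)\leq\liminf\mathfrak L(u_n)=\beta(0)$ by lower semicontinuity, while $\mathfrak L(u)\geq\beta(0)$ since $u$ is a limit of a min-max sweepout realizing $\beta(0)$; non-triviality ($\beta(0)>0$) is guaranteed by Step 1, so $\dot u$ does not vanish identically, and by the geodesic equation $|\dot u|$ is constant, hence $u\in\im$.

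\medskip

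\textbf{Step 4: Lower semicontinuity of the index.} For the index inequality $\mathrm{Ind}(u)\leq\liminf_n\mathrm{Ind}(u_n)$, I would argue by contradiction: if $\mathrm{Ind}(u)=k$, pick a $k$-dimensional subspace $V\subset\mathrm{W}^{2,2}_u(S^1,M)$ on which $D^2\mathfrak L(u)$ is negative definite (strictly, after a small perturbation within the negative cone). Since the limit geodesic is smooth and the $u_n\to u$ in $\mathrm{L}^\infty$ with $\dot u_n\to\dot u$ a.e., one can transplant $V$ to a $k$-dimensional subspace $V_n\subset\mathrm{W}^{2,2}_{u_n}(S^1,M)$ via parallel transport along a geodesic homotopy, and show $D^2 E_{\sigma_n}(u_n)$ restricted to $V_n$ remains negative definite for $n$ large: the second variation of $\mathfrak L$ converges because it involves only first derivatives of $u_n$ and the curvature tensor of $M$ evaluated along $u_n$ (which converge by the established convergences), while the extra term $\sigma_n^2 D^2(\int\kappa^2|\dot u|)$, tested on the \emph{fixed} smooth fields $V_n$, is $O(\sigma_n^2)\to0$ because those fields have bounded higher derivatives. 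Hence $\mathrm{Ind}(u_n)\geq k$ for $n$ large, giving the claimed lower semicontinuity. The delicate point here, as in Step 3, is controlling the $\sigma_n^2$-term of the Hessian: one must test only against smooth, $n$-independent (up to transport) variations so that the $\mathrm{W}^{2,2}$-norm blow-up of $u_n$ never enters.
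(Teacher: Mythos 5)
Your proposal follows the same architecture as the paper: four stages matching the paper's theorems on the entropic extraction (Struwe's monotonicity trick), the limiting procedure (quasi-conservation law), the admissible family (injectivity radius), and the index lower semicontinuity (transplanting a negative subspace and testing convergence of the Hessian). Steps 1--3 are essentially the paper's argument, sketched at a slightly higher level.

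There is, however, a genuine gap in Step 4. You claim that the transplanted test fields $V_n$ ``have bounded higher derivatives'' and that therefore the correction $\sigma_n^2 D^2\bigl(\int\kappa^2|\dot u|\bigr)$ is $O(\sigma_n^2)$, so that ``the $\mathrm{W}^{2,2}$-norm blow-up of $u_n$ never enters.'' Neither half of this claim is correct. Whether you transplant by orthogonal projection $v_n=P(u_n)v$ or by parallel transport along a geodesic homotopy, the transplanted field $v_n$ lives over $u_n$, so $D_t v_n$ and $D_t^2 v_n$ necessarily involve $\dot u_n$ and $D_t\dot u_n$; one has only $\|D_t v_n\|_\infty$ bounded, while $\sigma_n^2\|D_t^2 v_n\|_{L^2}^2$ tends to $0$ only because of the entropy bound (this is precisely the content of the paper's Lemma \ref{lemmealacon}, whose proof uses $\sigma_n^2\int\kappa^2\to 0$). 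Moreover, the Hessian formula \eqref{hessienne} itself contains terms such as $\sigma_n^2\int\langle D_t^2 v_n,D_t\dot u_n\rangle^2$, $\sigma_n^2\int\kappa^2|D_t v_n|^2$, and $\sigma_n^2\int\langle\D_v R(v,\dot u)\dot u,D_t\dot u\rangle$, in which $D_t\dot u_n$ and $\kappa^2(u_n)$ appear without being tested against anything; their $L^2$- and $L^1$-norms blow up like $\sigma_n^{-1}(\log\tfrac1{\sigma_n})^{-1/2}$, so these terms are \emph{not} $O(\sigma_n^2)$. What actually closes the argument (the paper's Lemma \ref{lemmeindice}) is a term-by-term estimate in which every such contribution is bounded by a product of a bounded quantity and $K_n=(\sigma_n^2\int\kappa^2(u_n)|\dot u_n|\,d\leb^1)^{1/2}\to 0$ (or $K_n^2$), so the entropy bound from Step 2 is doing the real work here and is not an ingredient you can drop. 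Without invoking it, the convergence $D^2E_{\sigma_n}(u_n)[v_n,v_n]\to D^2E_0(u)[v,v]$ does not follow, and your argument for $\mathrm{Ind}(u_n)\geq k$ would stall.
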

\begin{proof}
	The proof is the reunion of theorems \ref{bonnextraction}, \ref{géodésique}, \ref{admissiblefamily} and \ref{indextheorem}.
\end{proof}

Methods of viscosity were already successfully used in the past in various contexts: in elliptic partial differential equations (\cite{struweart}), hyperbolic partial differential equations (\cite{tartar1}, \cite{tartar2}) harmonic maps from surfaces (\cite{sacks}, \cite{tamashi}), and recently by the second author for free boundary problems (\cite{kekkon}), Yang-Mills equations (\cite{tian}). One general feature in these pieces is the $\epsilon$-regularity that one can get independently of $\sigma$. For example, in \cite{tamashi}, we consider immersions of a Riemannian surface $(M^2,g)$ into spheres $S^k$ ($k\in\N$, and $k\geq 2$), with
\begin{align*}
E_\sigma(u)=\int_{M}\left(|\D u|^2+\sigma^2|\Delta u|^2\right)d\mathrm{vol}_g,
\end{align*}
then the $\epsilon$-regularity means that there exists $\epsilon>0$, and $\delta>0$, such that for all $x\in M$, and $r>0$, there exists a constant $C=C(r,\epsilon)$ such that for all $\sigma>0$, for all critical point $u_\sigma$ of $E_\sigma$, the inequality
\begin{align*}
\int_{B_r(x)}\left(|\D u_\sigma|^2+\sigma^2|\Delta u_\sigma|^2\right)d\mathrm{vol}_g<\epsilon,
\end{align*}
implies that for all $k\in\N$, for all $0<\alpha<1$,
\begin{align*}
\Vert u_\sigma\Vert_{C^{k,\alpha}(B_{\delta r}(x))}\leq C,
\end{align*}
and this ensures that the limits of $\ens{u_\sigma}_{\sigma>0}$ are smooth, using classical results on the resolution of singularities for harmonic maps (see the references cited in \cite{tamashi}).
One new phenomena is the absence of $\epsilon$-regularity in this construction, as the following counter-examples shows (see section \ref{counterexamples} for the proof). 

\begin{prop}\label{counterintro}
	On $S^2$ equipped with its standard metric, let $\A$ the admissible set of curves given by the canonical sweep-out on $S^2$.  There exists a sequence $\ens{\sigma_n}_{n\in\N}$ of positive real numbers converging to $0$ and a sequence of critical points $\ens{u_n}_{n\in\N}$ of $\ens{E_{\sigma_n}}_{n\in\N}$, and a curve $u\in \mathrm{W}^{1,2}(S^1,M)$, such that \vspace{-0.4em}
	\begin{align*}
	E_{\sigma_n}(u_n)\conv \beta(0)=\pi, \quad \mathfrak{L}(u_n)\conv \frac{\pi}{2}
	\end{align*}
	and
	\begin{align*}
	u_n\conv[\mathrm{L}^\infty] u\quad\text{strongly},\quad \text{and}\quad u_n\xrightarrow[n\rightarrow\infty]{\mathrm{W}^{1,2}} u\quad\text{weakly},\quad \dot{u}_n\centernot{\xrightarrow[n\rightarrow\infty]{}} \dot{u} \quad \text{a.e.}
	\end{align*}
	Furthermore, there exists a negligible subset $N\subset S^1$ such that
	$\ens{\dot{u}_n(t)}_{n\in\N}$ has no limit point for all $t\in S^1\setminus N$,
	and for all open interval $I\subset S^1$,
	\begin{align*}
	\mathfrak{L}(u|I)<\liminf_{n\rightarrow\infty}\mathfrak{L}(u_n|I).
	\end{align*}
\end{prop}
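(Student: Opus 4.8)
The plan is to produce an explicit sequence of critical points of $E_{\sigma_n}$ on the round $S^2$ that oscillates at a scale going to zero. The natural candidates are the curves traced out at fixed latitude — the parallels — together with their finite covers. On $S^2$ a parallel of colatitude $\theta$ is a circle of geodesic curvature $\kappa = \cot\theta$ (constant along the curve), and its length is $2\pi\sin\theta$. For the canonical sweep-out of $S^2$ by parallels, $\beta(0) = \pi$ is achieved by the equator. To build a critical point of $E_\sigma$ for a given $\sigma$, I would look among curves of the form ``$p$-fold cover of a parallel of colatitude $\theta$''; by symmetry (rotational invariance about the polar axis), the Euler–Lagrange equation of $E_\sigma$ restricted to this $2$-parameter family $(p,\theta)$ reduces to a single scalar equation in $\theta$, which one solves for each $\sigma$. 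The key computation is to write $E_\sigma$ on such a curve explicitly as
\begin{align*}
E_\sigma(\theta,p) = 2\pi p \,\sin\theta\,\bigl(1 + \sigma^2\cot^2\theta\bigr),
\end{align*}
and to check that the rotationally symmetric critical points of the full functional on $\im$ coincide with the critical points of this function (a standard symmetric-criticality argument à la Palais, using that the round metric has an $S^1$ of isometries fixing the axis).

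Next I would choose, for each $n$, an integer $p_n \to \infty$ and a colatitude $\theta_n \to \pi/2$ so that the resulting curve $u_n$ is a critical point of $E_{\sigma_n}$ lying on a min-max level tending to $\beta(0) = \pi$; the parameters are tuned so that $p_n \sigma_n^2 \to 0$ (killing the curvature energy) while $p_n \sin\theta_n \to 1/2$, forcing $\mathfrak{L}(u_n) = 2\pi p_n \sin\theta_n \to \pi/2$. As $p_n\to\infty$ the image of $u_n$ collapses onto a single parallel, which tends to the equator; hence $u_n \to u$ in $L^\infty$ with $u$ a (constant-speed, degree-zero) parametrisation of the equator — note $u$ need not itself be a geodesic parametrisation since the limiting velocity is obtained only weakly. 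For the weak $W^{1,2}$ convergence and the defect in the length, the point is that $|\dot u_n|$ is (after the natural rescaling) roughly constant of size comparable to $p_n$, while $\dot u_n$ winds around the parallel $p_n$ times; so $\dot u_n$ converges weakly but its direction oscillates. The almost-everywhere non-existence of limit points of $\dot u_n(t)$ follows because, for a fixed $t$, the direction of $\dot u_n(t)$ is essentially $p_n t \bmod 2\pi$ (an equidistributing sequence on the circle for a full-measure set of $t$, by Weyl's theorem applied along the subsequence $p_n$), so no subsequence of directions converges; combined with $|\dot u_n(t)|$ staying bounded away from $0$ and $\infty$, this shows $\{\dot u_n(t)\}$ has no limit point for $t$ outside a null set $N$. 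The strict inequality $\mathfrak{L}(u|I) < \liminf \mathfrak{L}(u_n|I)$ on every open interval is then immediate from $\mathfrak{L}(u|I) = \tfrac14\,|I|/\pi \cdot (\text{length of equator})$-type bookkeeping versus $\liminf \mathfrak{L}(u_n|I) = \tfrac12\,|I|/(2\pi)\cdot 2\pi$, i.e. the oscillation strictly increases length locally, which is just lower semicontinuity of length made strict by the oscillation.

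The main obstacle I anticipate is twofold. First, one must verify that the parameters $(p_n,\theta_n,\sigma_n)$ can be chosen \emph{simultaneously} so that (a) $u_n$ is genuinely a critical point of $E_{\sigma_n}$ on all of $\im$ (not merely within the symmetric family), (b) the critical value equals the min-max value $\beta(\sigma_n)$ of the \emph{deformed} sweep-out, and (c) the asymptotics $p_n\sigma_n^2\to 0$, $p_n\sin\theta_n\to 1/2$, $E_{\sigma_n}(u_n)\to\pi$ all hold; this is a compatibility-of-constraints issue and requires solving the scalar critical-point equation asymptotically and checking monotonicity of $E_\sigma$ along the sweep-out so that the max is attained where claimed. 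Second, identifying $\beta(\sigma_n)$ with $E_{\sigma_n}(u_n)$ needs the admissible family (the $\sigma$-deformed canonical sweep-out on $S^2$) to be set up so that the mountain-pass level is exactly the symmetric critical level — this is where one invokes the construction of $\A$ and $\A_0$ from section \ref{bonnesuite} and the fact that on $S^2$ the canonical sweep-out is, up to reparametrisation, the unique optimal one. Once these bookkeeping points are settled, the convergence statements and the oscillation pathology are soft consequences of the explicit form of $u_n$.
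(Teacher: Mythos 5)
Your overall scaffolding is the same as the paper's: use parallels of small colatitude (constant geodesic curvature), traverse them many times, and tune the multiplicity $p_n$, the colatitude $\theta_n$, and $\sigma_n$ so that the curvature term $\sigma_n^2\kappa^2\mathfrak{L}$ stays of order one while the length tends to $\pi/2$. The paper writes these curves explicitly — the circle at colatitude $\theta$ with $\sin\theta = \sigma/(1-2\sigma^2)^{1/2}$, in arclength parametrisation — and checks the Euler–Lagrange equation on $S^2$ directly (constant $k = -(1-2\sigma^2)^{1/2}/\sigma$ solving \eqref{bonneq} with $K_M=1$, $p=0$), then takes $\sigma_n = 1/(4n)$, $m(\sigma_n)=n$. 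So the ``symmetric criticality'' step you anticipate as an obstacle is in fact a one-line explicit computation rather than an abstract Palais argument.

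However, your asymptotic picture contains a genuine error that propagates into the final claims. Imposing $p_n\to\infty$ and $p_n\sin\theta_n\to 1/2$ forces $\sin\theta_n\to 0$, i.e.\ the parallel shrinks to the pole, not (as you write) to the equator. If it tended to the equator one would have $\sin\theta_n\to 1$ and hence $p_n\to 1/2$, contradicting $p_n\to\infty$; your picture is internally inconsistent. Consequently the $L^\infty$ limit $u$ is the constant curve at the north pole, $u\equiv(0,0,1)$, and $\mathfrak{L}(u|I)=0$ for every interval $I$. The strict inequality $\mathfrak{L}(u|I)<\liminf\mathfrak{L}(u_n|I)$ then reads $0<\frac{\pi}{2}\leb^1(I)$, not the ``$\tfrac14|I|/\pi$ versus $\tfrac12|I|/(2\pi)$'' bookkeeping you give, which presupposes a nonconstant equatorial limit. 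Your equidistribution argument for the nonexistence of limit points of $\dot u_n(t)$ is in the right spirit (the paper uses density of $\{\cos(n\alpha)\}_n$ for irrational $\alpha$), but it should be stated for the actual curves: the direction of $\dot u_n(t)$ rotates at angular speed roughly $1/\sigma_n$, not $p_n$. Finally, note the proposition only asserts $E_{\sigma_n}(u_n)\to\beta(0)=\pi$; you need not establish $E_{\sigma_n}(u_n)=\beta(\sigma_n)$, so the ``compatibility of constraints'' you flag as a second obstacle is not required.
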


Due to the absence of $\epsilon$-regularity, we had to exploit quasi-conservation law issued from "almost Noether theorem", in order to apply technics from compensated compactness for getting the strong convergence in \eqref{strong}.

Finally, we note that our approach can also be applied for the construction of non-compact manifolds admitting non-trivial closed geodesics thanks of the article of Benci and Giannoni \cite{noncompactRiemannian}.

\section{Analytic and Geometric Preliminaries}

Let $(M^m,g)$ be a compact Riemannian manifold of dimension $m$ greater than $2$, and of class $C^\nu$ (where $\nu\geq 3$). We always assume that $M$ is equipped with its Levi-Civita connection $\D$ (we refer for definitions in Riemannian geometry to \cite{paulin}, \cite{jost} and \cite{lee}, and to \cite{federer} for the definitions and notations on measures). Let us recall the definition of  Sobolev spaces used in the following. One possible construction is to embed isometrically $M$ into an euclidean space $\R^q$ ($q\in\N$) thanks of Nash isometric embedding theorem, which we can apply here because $M$ is a $C^\nu$ manifold and $\nu\geq 3$. In the following, we can suppose that $M$ is a submanifold of $\R^q$. Let us denote $S^1=\C\cap\ens{z:|z|=1}$.

\begin{defi}
The Sobolev space $\ww$ is defined as follow
\begin{align*}
\ww=\mathrm{W}^{2,2}(S^1,M)\cap \ens{u:u(t)\in M\;\text{for all}\; t\in S^1}.
\end{align*}
The space of Sobolev immersions $\im$ is
\begin{align*}
\im=\ww\cap\ens{u:\dot{u}(t)\neq 0\;\text{for all}\; t\in S^1}.
\end{align*}
Finally, the vector space of tangent vector fields along an immersion $u\in\im$ is denotes by
\begin{align*}
\wt=\mathrm{W}^{2,2}(S^1,TM)\cap\ens{v:v(t)\in T_{u(t)}M\;\text{for all}\; t\in S^1}.
\end{align*}
\end{defi}

\begin{rem}
All these conditions make sense, because of the Sobolev embedding theorem, there is a continuous injection $\mathrm{W}^{2,2}(S^1,\R^q)$ into the space $C^{1,\frac{1}{2}}(S^1,\R^q)$ of differentiable mappings with $\frac{1}{2}$-Hölder continuous derivative. 
\end{rem}

We first remark that $\im$ has a natural smooth Finsler manifold structure, modelled on the Hilbert space $\mathrm{W}^{2,2}(S^1,\R^q)$, as it is an open set of the Hilbert manifold $\mathrm{W}^{2,2}(S^1,M)$. Furthermore that for all $u\in\im$, the tangent space of $\im$ is equal to $\wt$.

\begin{defi}
The covariant derivative along an immersion $u\in\im$ induced by the Levi-Civita connexion $\D$ with be denoted $D_t$ when there is no ambiguity on the curve.
\end{defi}

We recall that an immersion $u:S^1\rightarrow M$ is said to be a \textit{geodesic} if
\begin{align*}
D_t\dot{u}=0.
\end{align*}

\section{First Variation of Energy}

For all $\sigma\geq 0$, let $E_\sigma:\im\rightarrow\R$ be given by 
\begin{equation}
E_\sigma(u)=\int_{S^1}(1+\sigma^2\kappa(u)^2)|\dot{u}|d\leb^1
\end{equation}
for all $u\in\im$, where
\[
\kappa(u)=\left| \nabla_{\frac{\dot{u}}{|\dot{u}|}}\frac{\dot{u}}{|\dot{u}|}\right|
\]
is the geodesic curvature, and $\leb^1$ is the Lebesgue measure. If $\sigma=0$, then $E_0$ coincides with the length of curve and we note 
\begin{align*}
\mathfrak{L}(u)=E_0(u)=\int_{S^1}|\dot{u}|d\leb^1
\end{align*}
for all $u\in \im$.

We will state and prove some elementary lemmas before we proceed with the derivation of the first and second variations of the energy. 

\begin{lemme} For all $\sigma\geq 0$, the energy $E_\sigma:\im\rightarrow\R$ is a $C^{\nu-1}$ function. 
\end{lemme}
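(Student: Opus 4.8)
The plan is to show $E_\sigma$ is $C^{\nu-1}$ by exhibiting it as a composition of smooth-enough maps on the Hilbert manifold $\im \subset \mathrm{W}^{2,2}(S^1,M)$, using the Nash embedding $M \hookrightarrow \R^q$ so that all computations take place in $\mathrm{W}^{2,2}(S^1,\R^q)$. First I would note that $\im$ is an open subset of $\mathrm{W}^{2,2}(S^1,M)$, hence a $C^{\nu-1}$ Hilbert manifold (the chart transition maps inherit the $C^\nu$ regularity of $M$, losing one derivative in the usual way for manifolds of maps), and for $u \in \im$ the derivative $\dot u \in \mathrm{W}^{1,2}(S^1,\R^q) \hookrightarrow C^{0,1/2}(S^1,\R^q)$ is continuous and nowhere zero, so $|\dot u|$ is bounded below by a positive constant on a neighborhood of $u$. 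Thus the unit tangent $T = \dot u/|\dot u|$ is a well-defined element of $\mathrm{W}^{1,2}(S^1,\R^q)$ depending on $u$, and the map $u \mapsto \dot u/|\dot u|$ is $C^{\nu-1}$ (indeed real-analytic in $\dot u$ away from $\dot u = 0$, composed with the $C^{\nu-1}$ map $u \mapsto \dot u$).

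Next I would write the geodesic curvature in terms of the embedding. Writing $\D_{T}T = \ddot u$-type expression corrected by the second fundamental form of $M \subset \R^q$: concretely, with arclength-normalized derivative, $\nabla_{T}T = \Pi_{u}\big(\tfrac{d}{ds}(\dot u/|\dot u|)\big)$ where $\Pi_u : \R^q \to T_u M$ is orthogonal projection and $\tfrac{d}{ds} = \tfrac{1}{|\dot u|}\tfrac{d}{dt}$; equivalently one can use $\nabla_T T = \tfrac{1}{|\dot u|}\tfrac{d}{dt}\big(\tfrac{\dot u}{|\dot u|}\big) + A_u(T,T)$-style formulas, the point being that $\kappa(u)^2 = \big|\nabla_T T\big|^2$ is a pointwise algebraic expression built from $u$ (through the $C^{\nu-1}$ maps $x\mapsto \Pi_x$ and the second fundamental form, which are $C^{\nu-2}$), from $\dot u$, and from $\ddot u \in \mathrm{L}^2(S^1,\R^q)$. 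So $\kappa(u)^2 |\dot u|$ is, pointwise, of the form $F(u,\dot u,\ddot u)$ where $F$ is quadratic in $\ddot u$ with coefficients that are $C^{\nu-2}$ functions of $(u,\dot u)$ — and $u, \dot u$ live in the Banach algebra $C^{0}$ while $\ddot u \in \mathrm{L}^2$.

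Then I would invoke the standard $\Omega$-lemma (Palais): if $F$ is $C^{k}$ in all its arguments and the $\ddot u$-dependence is polynomial of degree $2$, then $u \mapsto \int_{S^1} F(u,\dot u,\ddot u)\,d\leb^1$ is $C^{k}$ on $\mathrm{W}^{2,2}$; here $k = \nu - 2$ from the coefficients, but the quadratic $\mathrm{L}^2 \times \mathrm{L}^2 \to \mathrm{L}^1$ pairing costs nothing extra, so combined with the $C^{\nu-1}$ regularity of $u \mapsto \dot u$ and $u \mapsto \dot u/|\dot u|$ we land at $C^{\nu-1}$ after being slightly more careful — in fact the curvature term only involves the coefficients $\Pi, A$ through $\mathrm{W}^{2,2}$ data which gains back a derivative via Sobolev embedding, and a careful bookkeeping of which terms carry how many derivatives of the metric gives exactly $\nu-1$. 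The $\sigma$-independent part $\mathfrak L(u) = \int |\dot u|$ is manifestly $C^\infty$ in $\dot u$ away from zero, hence $C^{\nu-1}$ on $\im$, and $E_\sigma = \mathfrak L + \sigma^2 \int \kappa^2 |\dot u|$.

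The main obstacle I expect is the precise derivative-counting: making sure that the second fundamental form / projection coefficients (which are only $C^{\nu-2}$ as functions on $M$) do not degrade the regularity below $\nu-1$. The resolution is that these coefficients are composed with $u \in \mathrm{W}^{2,2} \hookrightarrow C^{1,1/2}$, and composition $u \mapsto \phi \circ u$ with $\phi \in C^{\nu-2}$ followed by the Sobolev product structure recovers the lost derivative — one checks this by the usual induction on the order of differentiation of the composition operator, exactly as in the proof that $\mathrm{W}^{2,2}(S^1,M)$ itself is a $C^{\nu-1}$-manifold; the length functional does not even see this issue. I would also remark that continuity of $u \mapsto (\inf_{S^1}|\dot u|)^{-1}$ on $\im$ (from the $C^0$ embedding) is what makes "all the above is happening on a fixed open neighborhood with uniform lower bounds" rigorous, so that the abstract differentiability lemmas apply locally and hence globally on $\im$.
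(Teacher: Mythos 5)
Your overall strategy --- rewriting the integrand as a function $F_\sigma(u,\dot u,\ddot u)$ of the $2$-jet and invoking an abstract differentiability-under-the-integral result --- is the paper's (the paper cites dominated convergence; the $\Omega$-lemma you quote is the same idea in abstract form). Your identity $\nabla_T T = \Pi_u\bigl(\tfrac{d}{ds}(\dot u/|\dot u|)\bigr)$ is also precisely the expression the paper uses.

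The derivative bookkeeping, however, goes astray. You lump the projection and the second fundamental form together as both ``only $C^{\nu-2}$ as functions on $M$,'' and then try to recover the apparently missing derivative through composition with $u\in\mathrm{W}^{2,2}\hookrightarrow C^{1,1/2}$ and ``Sobolev product structure.'' That recovery step is not valid: composing with a $C^{\nu-2}$ outer function cannot make the resulting composition operator $C^{\nu-1}$, and Sobolev embedding does not gain derivatives. The paper sidesteps this entirely with a cleaner observation: the second fundamental form never appears in the formula for $\kappa$; only the orthogonal projection $P:M\times\R^q\to TM$ does, and $P$ is $C^{\nu-1}$, not $C^{\nu-2}$ (it depends on the tangent spaces, losing exactly one derivative from the $C^\nu$ manifold). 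The integrand $F_\sigma(x,y,z)=\bigl(1+\sigma^2\langle P_x(z),P_x(z)\rangle_x\bigr)\langle y,y\rangle_x^{1/2}$, suitably normalized by powers of $|y|$, is therefore $C^{\nu-1}$ and quadratic in $z$, and the conclusion follows with no patching. So: drop the $A_u(T,T)$ alternative formula as a red herring, correct the stated regularity of $P$ to $C^{\nu-1}$, and your own $\Omega$-lemma invocation then delivers $C^{\nu-1}$ directly.
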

\begin{proof}
Indeed, if $P:M\times\R^q\rightarrow TM$ is the orthogonal projection, then is it a $C^{\nu-1}$ function. If $F_{\sigma}:M\times\R^q\setminus\ens{0}\times \R^q$ is the mapping defined by
\[
(x,y,z)\mapsto F_\sigma(x,y,z)=(1+\sigma^2\scal{P_{x}(z)}{P_{x}(z)}_{x})\scal{y}{y}_x^{\frac{1}{2	}},
\]
then $F_\sigma$ is a $C^{\nu-1}$ function. The claim is therefore a simple consequence of Lebesgue's dominated convergence theorem, as
\[
E_\sigma(u)=\int_{S^1}F_\sigma(u,\dot{u},\ddot{u})d\leb^1.
\]
This concludes the proof of the first lemma.
\end{proof}

We will now derive formulae for the derivatives of the curvature and other geometric quantities. A variation of a curve $u\in \im$ is an differentiable map $\gamma:I\times S^1\rightarrow M$ such that $I$ is an open interval of $\R$ containing $0$, and $\gamma(0,\cdot)=u$, and for all $s\in I$, $\gamma(s,\cdot)\in\im$. The variation vector field $v\in\wt$ is defined as
\begin{align*}
v=\partial_s\gamma(s,\cdot)|_{s=0}.
\end{align*}
As a consequence, if $X=\im$, we have
\begin{align*}
DE_\sigma(u)\cdot v=\scal{DE_\sigma(u)}{v}_{T^*X,TX}=\frac{d}{ds}E_\sigma(\gamma(s,\cdot))|_{s=0}.
\end{align*}

We denote $D_t$ (resp. $D_s$) the covariant derivative along the curve $t\mapsto \gamma(\cdot, t)$ (resp. $s\mapsto \gamma(s,\cdot)$). We have the following commutation result.
\begin{lemme}\label{l1}
Under the afore mentioned hypothesis, we have
\[
D_t\partial_s \gamma(s,t)=D_s\partial_t\gamma(s,t)
\]
for all $(s,t)\in I\times S^1$,
and if $\lie{\cdot\,}{\cdot}$ is the Lie bracket, then
\begin{align*}
[\partial_s\gamma,\partial_t\gamma]=0.
\end{align*}
\end{lemme}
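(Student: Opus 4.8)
The plan is to reduce the commutation of the mixed covariant derivatives to the vanishing of the torsion of the Levi-Civita connection, and to do this pointwise in local coordinates so that the result is genuinely a statement about $\gamma$ as a $C^2$ map into $M$.

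First I would fix a point $(s_0,t_0)\in I\times S^1$ and work in a coordinate chart of $M$ around $\gamma(s_0,t_0)$, writing $\gamma^k$ for the components of $\gamma$ and $\Gamma^k_{ij}$ for the Christoffel symbols of $\D$. In these coordinates the coordinate vector fields $\partial_t\gamma$ and $\partial_s\gamma$ have components $\partial_t\gamma^k$ and $\partial_s\gamma^k$, and the covariant derivatives are
\begin{align*}
(D_t\partial_s\gamma)^k=\partial_t\partial_s\gamma^k+\Gamma^k_{ij}(\gamma)\,\partial_t\gamma^i\,\partial_s\gamma^j,\qquad
(D_s\partial_t\gamma)^k=\partial_s\partial_t\gamma^k+\Gamma^k_{ij}(\gamma)\,\partial_s\gamma^i\,\partial_t\gamma^j.
\end{align*}
Subtracting the two expressions, the second-derivative terms cancel by Schwarz's theorem (here one uses that $\gamma$ is at least $C^2$, which is guaranteed since $\gamma(s,\cdot)\in\im\subset \mathrm{W}^{2,2}\hookrightarrow C^{1,1/2}$ and the variation is differentiable in $s$), and the Christoffel terms cancel because $\Gamma^k_{ij}=\Gamma^k_{ji}$, i.e. the Levi-Civita connection is torsion-free. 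Hence $D_t\partial_s\gamma=D_s\partial_t\gamma$ at the chosen point, and since the point was arbitrary the identity holds on all of $I\times S^1$.

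For the Lie bracket statement I would observe that $\partial_s\gamma$ and $\partial_t\gamma$ are, by construction, the pushforwards under $\gamma$ of the coordinate vector fields $\partial_s$ and $\partial_t$ on $I\times S^1$; since these commute on the domain and the bracket is natural under smooth maps, $[\partial_s\gamma,\partial_t\gamma]=\gamma_*[\partial_s,\partial_t]=0$. Alternatively this can be read off directly in coordinates, since $[\partial_s\gamma,\partial_t\gamma]^k=\partial_s\gamma^i\partial_i(\partial_t\gamma^k)-\partial_t\gamma^i\partial_i(\partial_s\gamma^k)=\partial_s\partial_t\gamma^k-\partial_t\partial_s\gamma^k=0$. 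In fact, the torsion-free identity $D_t\partial_s\gamma-D_s\partial_t\gamma=T(\partial_t\gamma,\partial_s\gamma)+[\partial_s\gamma,\partial_t\gamma]$ shows the two claims are equivalent given $T\equiv 0$, so one really only proves one of them.

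The only point requiring a little care — and the closest thing to an obstacle — is the regularity bookkeeping: one must make sure the second mixed partials $\partial_s\partial_t\gamma^k$ and $\partial_t\partial_s\gamma^k$ exist and agree. This is where the definition of a variation as a \emph{differentiable} map $\gamma:I\times S^1\to M$ with $\gamma(s,\cdot)\in\im$ is used, together with the Sobolev embedding recalled in the preliminaries; once that is in place the computation is a one-line coordinate identity. I would state this regularity remark explicitly before the coordinate computation and then conclude.
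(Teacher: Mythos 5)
Your proof of the first identity — the coordinate computation with the Christoffel symbols, cancelling the mixed second partials by Schwarz and the $\Gamma^k_{ij}$ terms by symmetry of the Levi-Civita connection — is exactly the paper's argument, and your regularity remark is a sensible addition that the paper leaves implicit. For the Lie bracket, the paper derives $[\partial_s\gamma,\partial_t\gamma]=0$ as a consequence of the first claim via the torsion-free identity $[X,Y]=\nabla_X Y-\nabla_Y X$, whereas your primary argument establishes it independently by naturality of the bracket under the map $\gamma$; since you also observe that the two statements are equivalent given $T\equiv 0$, the content is the same and this is not a genuinely different route. One small caveat: your alternative coordinate computation of the bracket writes $\partial_i(\partial_t\gamma^k)$, treating $\partial_t\gamma^k$ as a function on $M$, but $\partial_s\gamma$, $\partial_t\gamma$ are vector fields along $\gamma$ and need not extend to vector fields on $M$ where $\gamma$ is not an immersion of $I\times S^1$; the pushforward/naturality phrasing (or the torsion-free route used by the paper) avoids this, so lean on that version.
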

\begin{proof}
Let $\gamma=(\gamma_1,\cdots,\gamma_d)$ be the local expression of $\gamma$ in a local coordinates system. We have
\[
\partial_s\gamma=\sum_{k=1}^d \partial_s\gamma_k\partial_k,\quad\partial_t\gamma=\sum_{k=1}^d\partial_t\gamma_k\partial_k
\]
Thanks of the defining properties of a connexion, we have
\begin{align*}
D_t\partial_s\gamma=\sum_{i,j,k=1}^d\left(\partial^2_{ts}\,\gamma_k+\partial_s\gamma_i\,\partial_t\gamma_j\,\Gamma_{ij}^k\right)\partial_k\\
D_s\partial_t\gamma=\sum_{i,j,k=1}^d\left(\partial^2_{st}\,\gamma_k+\partial_t\gamma_i\,\partial_s\gamma_j\,\Gamma_{ij}^k\right)\partial_k
\end{align*}
as the connection is symmetric, \textit{i.e.} $\Gamma_{ij}^k=\Gamma_{ji}^k$, if suffices now to exchange the index $i$ and $j$ of one of the two preceding lines.
The second result is a consequence of the absence of torsion of Levi-Civita connection. As
\[
D_t\partial_s \gamma(s,t)=\nabla_{\partial_t\gamma(s,t)}\partial_s\gamma(s,t), \quad D_s\partial_t \gamma(s,t)=\nabla_{\partial_s\gamma(s,t)}\partial_t\gamma(s,t).
\]
we deduce that 
\begin{align*}
[\partial_s\gamma(s,t),\partial_t\gamma(s,t)]=D_t\partial_s \gamma(s,t)&=\nabla_{\partial_t\gamma(s,t)}\partial_s\gamma(s,t)-\nabla_{\partial_s\gamma(s,t)}\partial_t\gamma(s,t)\\
&=D_t\partial_s \gamma(s,t)-D_s\partial_t \gamma(s,t)=0,
\end{align*}
which completes the proof of the second lemma.
\end{proof}

We will denote in the following, if $(x,v)\in TM$,
\[
|v|=\sqrt{g_x(v,v)}=\sqrt{\langle v, v\rangle_x}.
\]
We now aim at calculating the first variation of the curvature.
\[
\kappa(s,t)=\left|D_t\left(\dfrac{\partial_t\gamma(s,t)}{|\partial_t\gamma(s,t)|}\right)\dfrac{1}{|\partial_t\gamma(s,t)|}\right|
\]
where $D_t$ is the covariant derivative along the curve $t\mapsto \gamma(\cdot,t)$. As $\gamma$ is extensive for $s$ close enough to $0$, we have
\[
\kappa(s,t)=\left|\nabla_{\frac{\partial_t\gamma(s,t)}{|\partial_t\gamma(s,t)|}}\frac{\partial_t\gamma(s,t)}{|\partial_t\gamma(s,t)|}\right|.
\]
To simplify notations, let us write $\gamma_t=\partial_t\gamma$, $\gamma_s=\partial_s\gamma$, et $\bar{\gamma_t}=\dfrac{\partial_t\gamma}{|\partial_t\gamma|}$.
\begin{prop}\label{commute}
Under the preceding hypothesis, we have the following identities
\begin{enumerate}
\item $\partial_s|\gamma_t|=\scal{\nabla_{\gamma_t}\gamma_s}{\bar{\gamma_t}}=\alpha |\gamma_t|$, where $\alpha=\scal{\D_{\bar{\gamma}_t}\gamma_s}{\gamma_t}$,
\item $\lie{\gamma_s}{\bar{\gamma_t}}=-\scal{\nabla_{\bar{\gamma_t}}\gamma_s}{\bar{\gamma_t}}\bar{\gamma_t}=-\alpha \bar{\gamma_t}$,
\item $\partial_s\kappa^2=2\scal{\nabla_{\bar{\gamma_t}}^2\gamma_s}{\nabla_{\bar{\gamma_t}}\bar{\gamma_t}}-4\alpha\kappa^2+2\scal{R(\gamma_s,\bar{\gamma_t})\bar{\gamma_t}}{\nabla_{\bar{\gamma_t}}\bar{\gamma_t}}$
 ($\alpha=\scal{\D_{\bar{\gamma_t}}\gamma_s}{\bar{\gamma_t}}$).
\end{enumerate}
\end{prop}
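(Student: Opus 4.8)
The plan is to compute everything by repeatedly applying the commutation relation $D_t\gamma_s = D_s\gamma_t$ from Lemma \ref{l1}, together with the defining properties of the Levi-Civita connection (metric compatibility and vanishing torsion) and the curvature identity $D_sD_t W - D_tD_s W = R(\gamma_s,\gamma_t)W$ for a vector field $W$ along $\gamma$. All three items are purely formal manipulations once one is careful about the normalisation $\bar\gamma_t = \gamma_t/|\gamma_t|$.

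\textbf{Item 1.} I would start from $|\gamma_t|^2 = \scal{\gamma_t}{\gamma_t}$, differentiate in $s$ using metric compatibility to get $2|\gamma_t|\,\partial_s|\gamma_t| = 2\scal{D_s\gamma_t}{\gamma_t}$, then replace $D_s\gamma_t$ by $D_t\gamma_s = \nabla_{\gamma_t}\gamma_s$ via Lemma \ref{l1}. Dividing by $|\gamma_t|$ gives $\partial_s|\gamma_t| = \scal{\nabla_{\gamma_t}\gamma_s}{\bar\gamma_t}$; pulling one factor of $|\gamma_t|$ out of $\nabla_{\gamma_t}\gamma_s$ (the connection is tensorial in the lower slot) identifies this with $\alpha|\gamma_t|$ where $\alpha = \scal{\nabla_{\bar\gamma_t}\gamma_s}{\bar\gamma_t}$.

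\textbf{Item 2.} Here I would use the torsion-free identity $[\gamma_s,\bar\gamma_t] = D_s\bar\gamma_t - D_{\bar\gamma_t}\gamma_s$ (the analogue of the computation in Lemma \ref{l1}, now with the normalised field). Compute $D_s\bar\gamma_t = D_s(\gamma_t/|\gamma_t|) = \frac{1}{|\gamma_t|}D_s\gamma_t - \frac{\partial_s|\gamma_t|}{|\gamma_t|^2}\gamma_t = \frac{1}{|\gamma_t|}D_{\bar\gamma_t}\gamma_s\cdot|\gamma_t| \cdot\frac1{|\gamma_t|}\ldots$; more cleanly, $D_s\bar\gamma_t = D_{\bar\gamma_t}\gamma_s - \alpha\bar\gamma_t$ after substituting item 1 and $D_s\gamma_t = D_t\gamma_s = |\gamma_t|\,D_{\bar\gamma_t}\gamma_s$. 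Subtracting $D_{\bar\gamma_t}\gamma_s$ leaves exactly $-\alpha\bar\gamma_t$.

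\textbf{Item 3.} This is the real computation. Write $\kappa^2 = \scal{\nabla_{\bar\gamma_t}\bar\gamma_t}{\nabla_{\bar\gamma_t}\bar\gamma_t}$ and differentiate in $s$ using metric compatibility: $\partial_s\kappa^2 = 2\scal{D_s(\nabla_{\bar\gamma_t}\bar\gamma_t)}{\nabla_{\bar\gamma_t}\bar\gamma_t}$. The task is to commute $D_s$ past the two $\bar\gamma_t$-derivatives. Using item 2 to handle the bracket terms and the curvature identity to swap $D_sD_{\bar\gamma_t}$ for $D_{\bar\gamma_t}D_s + R(\gamma_s,\bar\gamma_t) + [\ldots]$, one processes $D_s(\nabla_{\bar\gamma_t}\nabla_{\bar\gamma_t}\bar\gamma_t)$ and carefully tracks every $\alpha$ that appears (each commutation of $D_s$ with a $\bar\gamma_t$-differentiation produces a $-\alpha$ from item 2, and $D_s\bar\gamma_t = \nabla_{\bar\gamma_t}\gamma_s - \alpha\bar\gamma_t$ feeds the innermost slot). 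Collecting terms, the ``main'' term $D_s$ hitting the field gives $\nabla_{\bar\gamma_t}^2\gamma_s$, the bracket corrections accumulate to $-4\alpha\kappa^2$ (two commutations, each contributing $-2\alpha\kappa^2$ after pairing with $\nabla_{\bar\gamma_t}\bar\gamma_t$), and the single curvature term is $R(\gamma_s,\bar\gamma_t)\bar\gamma_t$; pairing with $\nabla_{\bar\gamma_t}\bar\gamma_t$ and multiplying by $2$ yields the stated formula.

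\textbf{Main obstacle.} The only genuine difficulty is the bookkeeping in item 3: one must be scrupulous about which terms are $\bar\gamma_t$ (tensorial, producing no extra derivative) versus covariant derivatives of $\bar\gamma_t$, and must verify that the several $\alpha$-contributions — one from differentiating the outer $|\gamma_t|^{-1}$ normalisations implicit in each $\nabla_{\bar\gamma_t}$, and others from the bracket identity in item 2 — combine to exactly the coefficient $-4$. A clean way to organise this is to first establish the intermediate identity $D_s(\nabla_{\bar\gamma_t} W) = \nabla_{\bar\gamma_t}(D_s W) - \alpha\nabla_{\bar\gamma_t} W + R(\gamma_s,\bar\gamma_t)W$ for an arbitrary field $W$ along $\gamma$ (this packages item 2 and the curvature identity), then apply it twice with $W = \bar\gamma_t$ and $W = \nabla_{\bar\gamma_t}\bar\gamma_t$, using $D_s\bar\gamma_t = \nabla_{\bar\gamma_t}\gamma_s - \alpha\bar\gamma_t$ at the bottom. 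Dropping the curvature terms that pair to zero and simplifying gives item 3 with minimal risk of sign errors.
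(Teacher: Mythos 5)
Your proposal is correct and follows essentially the same route as the paper: metric compatibility plus $D_s\gamma_t = D_t\gamma_s$ for items 1--2, and for item 3 the commutation identity $D_s\nabla_{\bar\gamma_t} = \nabla_{\bar\gamma_t}D_s - \alpha\nabla_{\bar\gamma_t} + R(\gamma_s,\bar\gamma_t)$ (the paper's equation \eqref{noncomm}) together with $D_s\bar\gamma_t = \nabla_{\bar\gamma_t}\gamma_s - \alpha\bar\gamma_t$. One small slip in your description of item 3: the commutation identity is applied only once, with $W = \bar\gamma_t$, since $\kappa^2 = |\nabla_{\bar\gamma_t}\bar\gamma_t|^2$ involves a single covariant derivative; the second $-\alpha\nabla_{\bar\gamma_t}\bar\gamma_t$ contribution does not come from a second commutation but from the Leibniz expansion of $\nabla_{\bar\gamma_t}(\alpha\bar\gamma_t)$ inside $\nabla_{\bar\gamma_t}(D_s\bar\gamma_t)$, and the two together pair with $\nabla_{\bar\gamma_t}\bar\gamma_t$ and double to give the coefficient $-4$.
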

\begin{proof}
\begin{enumerate}
\item 
We have
\begin{align*}
\partial_s|\gamma_t|=\frac{1}{|\gamma_t|}\scal{D_s\gamma_t}{\gamma_t}
=\scal{D_t\gamma_s}{\bar{\gamma}_t}
=\scal{\nabla_{\gamma_t}\gamma_s}{\bar{\gamma}_t}.
\end{align*}
\item 	Indeed, thanks of lemma \ref{l1}, we have
\begin{align*}
	[\gamma_s,\bar{\gamma}_t]&=\nabla_{\gamma_s}\bar{\gamma}_t-\nabla_{\bar{\gamma}_t}\gamma_s=\nabla_{\partial_s\gamma}\frac{\partial_t\gamma}{|\partial_t\gamma|}-\nabla_{\frac{\partial_t\gamma}{|\partial_t\gamma|}}\partial_s		\gamma
	=D_s\frac{\partial_t\gamma}{|\partial_t\gamma|}-\frac{1}{|\partial_t\gamma|}			\nabla_{\partial_t\gamma}\partial_s\gamma\\
	&=\frac{D_s\partial_t\gamma}{|\partial_t\gamma|}-\frac{\partial_s|\partial_t			\gamma|}{|\partial_t\gamma|^2}\partial_t\gamma-\frac{D_t\partial_s\gamma}{|				\partial_t\gamma|}
	=-\scal{\nabla_{\bar{\gamma}_t}\gamma_s}{\bar{\gamma}_t}\bar{\gamma}_t
\end{align*}
\item Finally,
\begin{align*}
\partial_s\kappa^2=2
\scal{D_s\nabla_{\bar{\gamma}_t}\bar{\gamma}_t}{\nabla_{\bar{\gamma}_t}\bar{\gamma}_t}=2\scal{\nabla_{\gamma_s}\nabla_{\bar{\gamma}_t}\bar{\gamma}_t}{\nabla_{\bar{\gamma}_t}\bar{\gamma}_t}
\end{align*}
and $\nabla_{\gamma_s}\bar{\gamma}_t=\nabla_{\bar{\gamma}_t}\gamma_s-\alpha \bar{\gamma}_t$, therefore
\begin{align*}
\partial_s\kappa^2&=2\scal{\nabla_{\gamma_s}\nabla_{\bar{\gamma}_t}\bar{\gamma}_t-\nabla_{\bar{\gamma}_t}\nabla_{\gamma_s}\bar{\gamma}_t-\nabla_{[\gamma_s,\bar{\gamma}_t]}\bar{\gamma}_t}{\nabla_{\bar{\gamma}_t}\bar{\gamma}_t}+2\scal{\nabla_{\bar{\gamma}_t}\nabla_{\gamma_s}\bar{\gamma}_t}{\nabla_{\bar{\gamma}_t}\bar{\gamma}_t}\\
&+2\scal{\nabla_{[\gamma_s,\bar{\gamma}_t]}\bar{\gamma}_t}{\nabla_{\bar{\gamma}_t}\bar{\gamma}_t}\\
&=2\scal{R(\gamma_s,\bar{\gamma}_t)\bar{\gamma}_t}{\nabla_{\bar{\gamma}_t}\bar{\gamma}_t}+2\scal{\nabla_{\bar{\gamma}_t}(\nabla_{\bar{\gamma}_t}\gamma_s-\alpha \bar{\gamma}_t)}{\nabla_{\bar{\gamma}_t}\bar{\gamma}_t}+2\scal{\nabla_{-\alpha\bar{\gamma}_t}\bar{\gamma}_t}{\nabla_{\bar{\gamma}_t}\bar{\gamma}_t}\\
&=2\scal{R(\gamma_s,\bar{\gamma}_t)\bar{\gamma}_t}{\nabla_{\bar{\gamma}_t}\bar{\gamma}_t}-4\alpha\kappa^2+2\scal{\nabla_{\bar{\gamma}_t}\nabla_{\bar{\gamma}_t}\gamma_s}{\nabla_{\bar{\gamma}_t}\bar{\gamma}_t}-2\scal{dg(\bar{\gamma}_t)\bar{\gamma}_t}{\nabla_{\bar{\gamma}_t}\bar{\gamma}_t}\\
&=2\scal{R(\gamma_s,\bar{\gamma}_t)\bar{\gamma}_t}{\nabla_{\bar{\gamma}_t}\bar{\gamma}_t}-4\alpha\kappa^2+2\scal{\nabla_{\bar{\gamma}_t}\nabla_{\bar{\gamma}_t}\gamma_s}{\nabla_{\bar{\gamma}_t}\bar{\gamma}_t}
\end{align*}
as $|\bar{\gamma}_t|=1$, we deduce that $0=d\scal{\bar{\gamma}_t}{\bar{\gamma}_t}\cdot \bar{\gamma}_t=2\scal{\nabla_{\bar{\gamma}_t}\bar{\gamma}_t}{\bar{\gamma}_t}$.
\end{enumerate}
This calculation ends the proof of the proposition.
\end{proof}

\begin{prop}
If $u\in \im$, $L=\mathfrak{L}(u)$, and $v\in \wt$, then 
\begin{align}\label{firstvar}
DE_\sigma(u)\cdot v=\int_{0}^{L}\scal{D_tv}{\dot{u}}d\leb^1+\sigma^2\int_{0}^L2 \scal{D_t^2v}{D_t\dot{u}}-3\scal{D_tv}{\dot{u}}\kappa^2(u)+2\scal{R(v,\dot{u})\dot{u}}{D_t\dot{u}}d\leb^1.
\end{align}
if $R$ is the Riemannian curvature tensor on $(M^m,g)$.
\end{prop}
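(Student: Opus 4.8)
The plan is to compute $\frac{d}{ds}E_\sigma(\gamma(s,\cdot))|_{s=0}$ directly by differentiating under the integral sign, parametrizing by arc-length of $u$ at the end. We start from
\[
E_\sigma(\gamma(s,\cdot))=\int_{S^1}\bigl(1+\sigma^2\kappa^2(s,t)\bigr)|\gamma_t(s,t)|\,d\leb^1(t),
\]
so that
\[
\frac{d}{ds}\Big|_{s=0}E_\sigma(\gamma(s,\cdot))=\int_{S^1}\partial_s|\gamma_t|\,d\leb^1+\sigma^2\int_{S^1}\bigl(\partial_s\kappa^2\,|\gamma_t|+\kappa^2\,\partial_s|\gamma_t|\bigr)\,d\leb^1,
\]
and plug in the three identities of Proposition \ref{commute}. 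The first term gives the length contribution $\int_{S^1}\alpha|\gamma_t|\,d\leb^1=\int_{S^1}\scal{\nabla_{\gamma_t}\gamma_s}{\bar\gamma_t}\,d\leb^1$, which at $s=0$ is $\int_0^L\scal{D_tv}{\dot u}\,d\leb^1$ after reparametrizing by arc-length (where $|\dot u|=1$, $\gamma_t=\dot u$, $\gamma_s=v$). For the viscous term, substituting (3) of Proposition \ref{commute} yields
\[
\sigma^2\int_{S^1}\Bigl(2\scal{\nabla_{\bar\gamma_t}^2\gamma_s}{\nabla_{\bar\gamma_t}\bar\gamma_t}-4\alpha\kappa^2+2\scal{R(\gamma_s,\bar\gamma_t)\bar\gamma_t}{\nabla_{\bar\gamma_t}\bar\gamma_t}+\alpha\kappa^2\Bigr)|\gamma_t|\,d\leb^1,
\]
so the two $\alpha\kappa^2$ terms combine to $-3\alpha\kappa^2$, which at $s=0$ is $-3\scal{D_tv}{\dot u}\kappa^2(u)$ (using $\alpha|\gamma_t|=\partial_s|\gamma_t|$ and item (1)). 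The curvature term becomes $2\scal{R(v,\dot u)\dot u}{D_t\dot u}$, matching \eqref{firstvar} since $\nabla_{\bar\gamma_t}\bar\gamma_t=D_t\dot u$ in arc-length gauge.

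It remains to handle the leading second-order term $2\sigma^2\int_{S^1}\scal{\nabla_{\bar\gamma_t}^2\gamma_s}{\nabla_{\bar\gamma_t}\bar\gamma_t}\,|\gamma_t|\,d\leb^1$. The point is that $\nabla_{\bar\gamma_t}=\frac{1}{|\gamma_t|}D_t$, so $\nabla_{\bar\gamma_t}^2\gamma_s\,|\gamma_t|=\frac{1}{|\gamma_t|}D_t\bigl(\frac{1}{|\gamma_t|}D_t\gamma_s\bigr)|\gamma_t|$; evaluated at $s=0$ in the arc-length parametrization of $u$ this is exactly $D_t^2 v$, so the term becomes $2\sigma^2\int_0^L\scal{D_t^2v}{D_t\dot u}\,d\leb^1$. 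The one subtlety to be careful about is that $\partial_s$ and the arc-length reparametrization do not commute, so strictly speaking one should differentiate with $t$ a fixed parameter on $S^1$ and only afterward change variables; alternatively, one invokes the reparametrization invariance of $E_\sigma$ (each $E_\sigma$ depends only on the image curve, since $\kappa$, $|\dot u|\,d\leb^1$ are geometric) to assume directly that $u$ is parametrized proportionally to arc-length, which makes $|\gamma_t(0,t)|\equiv L$ constant and removes all the $|\gamma_t|$ weight bookkeeping at $s=0$. I expect this reparametrization step — justifying that one may compute the first variation in the arc-length gauge and that the boundary terms vanish on $S^1$ (no boundary) — to be the only genuine point requiring care; the rest is assembling Proposition \ref{commute}. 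Finally one notes all integrands are $L^1$ by the $W^{2,2}\hookrightarrow C^{1,1/2}$ embedding and $\dot u\neq 0$, so dominated convergence legitimizes differentiation under the integral, completing the proof.
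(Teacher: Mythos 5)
Your proposal is correct and follows essentially the same route as the paper: differentiate $E_\sigma(\gamma(s,\cdot))$ under the integral sign, substitute the three identities of Proposition \ref{commute} for $\partial_s|\gamma_t|$ and $\partial_s\kappa^2$, observe that the $-4\alpha\kappa^2$ term from $\partial_s\kappa^2$ combines with the $+\alpha\kappa^2$ term from $\kappa^2\,\partial_s|\gamma_t|$ to give the $-3$ coefficient, and finally pass to the arc-length gauge at $s=0$ where $|\gamma_t|=1$ so that $\nabla_{\bar\gamma_t}=D_t$, $\alpha=\scal{D_tv}{\dot u}$, and the curvature and second-order terms take the claimed form. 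The remarks you add on reparametrization invariance and on dominated convergence to justify differentiation under the integral are correct precisions that the paper leaves implicit.
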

\begin{proof}
Thanks of the preceding lemmas, if $\gamma$ is a variation of $u$ such that $\partial_s\gamma|_{s=0}=v$, then we have
\begin{align*}
\dfrac{d}{ds} E_{\sigma}(\gamma(s,\cdot))&=\int_{S^1}\sigma^2(\partial_s\kappa^2(s,t))|\partial_t\gamma(s,t)| dt
+\int_{S^1}(1+\sigma^2\kappa^2(s,t))\partial_s{|\partial_t\gamma(s,t)|}dt\\
&=\sigma^2\int_{0}^L \scal{\nabla_{\bar{\gamma}_t}\nabla_{{\bar{\gamma}_t}}\gamma_s}{\nabla_{\bar{\gamma}_t}{\bar{\gamma}_t}}
-4\alpha\kappa^2+2\scal{R(\gamma_s,\gamma_t)\gamma_t}{D_t\gamma_t}d\leb^1
+\int_{0}^L(1+\sigma^2\kappa^2)\alpha d\leb^1\\
&=\int_{0}^{L}\scal{D_tv}{\dot{u}}+\sigma^2\int_{0}^{L}2\scal{D_t^2v}{D_t\dot{u}}d\leb^1-3\scal{D_tv}{\dot{u}}\kappa^2(u)+2\scal{R(v,\dot{u})\dot{u}}{D_t\dot{u}}d\leb^1.
\end{align*}
So we have the desired result.
\end{proof}
If $u$ is a critical point of $E_\sigma$ of at least class $C^3$, then
\begin{align*}
\frac{d}{ds}E_s(\gamma(s,\cdot))&=\int_{0}^L-2\sigma^2\scal{\nabla_{\bar{\gamma}_t}\gamma_s}{\nabla_{\bar{\gamma}_t}^2{\bar{\gamma}_t}}
+\scal{\nabla_{\bar{\gamma}_t}\gamma_s}{(1-3\sigma^2\kappa^2){\bar{\gamma}_t}}
+2\sigma^2\scal{U_s}{R(\nabla_{\bar{\gamma}_t}{\bar{\gamma}_t},{\bar{\gamma}_t}){\bar{\gamma}_t}}d\tau\\
&=\int_{0}^L\scal{2\sigma^2\nabla_{\bar{\gamma}_t}^3{\bar{\gamma}_t}
+\nabla_{\bar{\gamma}_t}((3\sigma^2\kappa^2-1)\bar{\gamma}_t)
+2\sigma^2R(\nabla_{\bar{\gamma}_t},{\bar{\gamma}_t}){\bar{\gamma}_t}}{\gamma_s}d\tau
\end{align*}
as $\scal{R(\gamma_s,\bar{\gamma}_t)\bar{\gamma}_t}{\nabla_{\bar{\gamma}_t}\bar{\gamma}_t}=\scal{\gamma_s}{R(\nabla_{\bar{\gamma}_t}\bar{\gamma}_t,\bar{\gamma}_t)\bar{\gamma}_t}$. 
As a consequence \eqref{firstvar} is equivalent to the following Euler-Lagrange equation
\begin{equation}\label{presque géodésique}
D_t\dot{u}=\sigma^2\left\{D_t\left(2D^2_t\dot{u}+3\kappa^2\dot{u}\right)+2R(D_t\dot{u},\dot{u})\dot{u}\right\}
\end{equation}
in the distributional sense. According to the forecoming part \ref{Passage}, this equation implies that $u$ is a $C^{\nu-1}$ function.

\section{Second Variation of Energy}

We recall that the second variation or Hessian is defined as follows. Let $u$ be a critical point of $E_\sigma$. For every $v\in\wt$, if $\gamma:I\times S^1$ is a $C^2$ variation of $u$ such that $\partial_s\gamma_{s=0}=v$, we define
\[
D^2E_\sigma(u)[v,v]=\left.\frac{\partial^2}{\partial s^2}E_{\sigma}(\gamma(s,\cdot))\right|_{s=0}
\]
and this definition is independent of the variation.

\begin{prop}
If $u\in\im$ is a critical point of $E_\sigma$, then for all $v\in \wt$, we have
\begin{align}\label{hessienne}
D^2E_{\sigma}(u)[v,v]&=2\sigma^2\int_0^{L}|D_t^2v|^2+|R(v,\dot{u})\dot{u}|^2+2\left(4\scal{D_tv}{\dot{u}}^2+2\scal{D_tv}{\dot{u}}-|D_tv|^2+\scal{R(\dot{u},v)v}{\dot{u}}\right)\kappa^2(u)\nonumber\\
&-\left(\scal{D_t^2v}{\dot{u}}+\scal{D_tv}{D_t\dot{u}}\right)^2-8\scal{D_tv}{\dot{u}}\scal{D_t^2v}{D_t\dot{u}}+\scal{\D_{\dot{u}}R(v,\dot{u})v}{D_t\dot{u}}\nonumber\\
&+\scal{\D_{v}R(v,\dot{u})\dot{u}}{D_t\dot{u}}+\scal{R(D_tv),\dot{u})v}{D_t\dot{u}}-\scal{R(D_t\dot{u},v)v}{D_t\dot{u}}+3\scal{R(v,\dot{u})D_tv}{D_t\dot{u}}\nonumber\\
&+\scal{R(\dot{u},D_tv)\dot{u}}{D_t\dot{u}}+2\scal{R(v,\dot{u})D_tv}{D_t\dot{u}}-6\scal{D_tv}{\dot{u}}\scal{R(v,\dot{u})\dot{u}}{D_t\dot{u}}d\leb^1\nonumber\\
&+4\sigma^2\int_0^{L}\scal{D_tv}{\dot{u}}\left(\scal{D_t^2v}{D_t\dot{u}}-2\scal{D_tv}{\dot{u}}\kappa^2(u)+\scal{R(v,\dot{u})\dot{u}}{D_t\dot{u}}\right)d\leb^1\nonumber\\
&+\int_0^{L}\left(1+\sigma^2\kappa^2(u)\right)\left(|D_tv|^2-\scal{D_tv}{\dot{u}}^2-\scal{R(\dot{u},v)v}{\dot{u}}\right)d\leb^1
\end{align}
\end{prop}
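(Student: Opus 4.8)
The plan is to establish \eqref{hessienne} by differentiating the first variation a second time along a conveniently chosen two--parameter variation. Take the geodesic variation $\gamma(s,t)=\exp_{u(t)}(s\,v(t))$: it satisfies $\gamma(0,\cdot)=u$ and $\partial_s\gamma|_{s=0}=v$, it belongs to $\im$ for $|s|$ small because $\partial_t\gamma(0,\cdot)=\dot u$ is bounded away from $0$ on $S^1$ and $\partial_t\gamma$ is continuous, and, each curve $s\mapsto\gamma(s,t)$ being a geodesic, it enjoys the crucial simplification $D_s\gamma_s\equiv 0$. Since $D^2E_\sigma(u)[v,v]$ does not depend on the chosen variation, this is no loss of generality; we further parametrise $u$ by arc length, so that at $s=0$ one has $|\gamma_t|=1$, $\bar{\gamma_t}=\dot u$, $\nabla_{\bar{\gamma_t}}=D_t$ and $\scal{D_t\dot u}{\dot u}=0$. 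Starting from the identity recorded at the beginning of the proof of the first--variation proposition, namely
\[
\frac{d}{ds}E_\sigma(\gamma(s,\cdot))=\sigma^2\int_{S^1}(\partial_s\kappa^2)\,|\gamma_t|\,d\leb^1+\int_{S^1}(1+\sigma^2\kappa^2)(\partial_s|\gamma_t|)\,d\leb^1,
\]
I differentiate once more in $s$ and evaluate at $s=0$ (where $|\gamma_t|=1$), which gives
\[
D^2E_\sigma(u)[v,v]=\sigma^2\int_{S^1}(\partial_s^2\kappa^2)\,d\leb^1+2\sigma^2\int_{S^1}(\partial_s\kappa^2)(\partial_s|\gamma_t|)\,d\leb^1+\int_{S^1}(1+\sigma^2\kappa^2)(\partial_s^2|\gamma_t|)\,d\leb^1,
\]
all $s$--derivatives being taken at $s=0$; differentiation under the integral is legitimate because $v\in\wt$ and $u\in\im$ is a regular ($C^{\nu-1}$) critical point, by dominated convergence.

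The last two groups of terms are elementary and already reproduce the last two lines of \eqref{hessienne}. By Proposition \ref{commute} one has, at $s=0$, $\partial_s|\gamma_t|=\alpha=\scal{D_tv}{\dot u}$ and $\partial_s\kappa^2=2\scal{D_t^2v}{D_t\dot u}-4\alpha\kappa^2+2\scal{R(v,\dot u)\dot u}{D_t\dot u}$; multiplying these two identities produces exactly the term $4\sigma^2\int_0^L\scal{D_tv}{\dot u}\big(\scal{D_t^2v}{D_t\dot u}-2\scal{D_tv}{\dot u}\kappa^2(u)+\scal{R(v,\dot u)\dot u}{D_t\dot u}\big)\,d\leb^1$ of \eqref{hessienne}. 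To compute $\partial_s^2|\gamma_t|$ I differentiate the relation $\partial_s|\gamma_t|=\scal{D_t\gamma_s}{\bar{\gamma_t}}$ once more: invoking $D_sD_t\gamma_s=D_tD_s\gamma_s+R(\gamma_s,\gamma_t)\gamma_s$ together with $D_s\gamma_s\equiv0$, and $D_s\bar{\gamma_t}=D_tv-\alpha\dot u$ (the second identity of Proposition \ref{commute}), one finds $\partial_s^2|\gamma_t|=|D_tv|^2-\scal{D_tv}{\dot u}^2+\scal{R(v,\dot u)v}{\dot u}=|D_tv|^2-\scal{D_tv}{\dot u}^2-\scal{R(\dot u,v)v}{\dot u}$, which is precisely the bracket multiplying $1+\sigma^2\kappa^2(u)$ in the last line of \eqref{hessienne}.

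It remains to compute $\partial_s^2\kappa^2$ at $s=0$; this step is the bulk of the work and the only real obstacle, since all the remaining $\sigma^2$--terms of \eqref{hessienne}, in particular the covariant--derivative--of--curvature terms $\scal{\D_{\dot u}R(v,\dot u)v}{D_t\dot u}$ and $\scal{\D_vR(v,\dot u)\dot u}{D_t\dot u}$, originate here. I differentiate the formula for $\partial_s\kappa^2$ of Proposition \ref{commute}, item (3), once more in $s$. Rewriting $\nabla_{\bar{\gamma_t}}W=|\gamma_t|^{-1}D_tW$ for any vector field $W$ along $\gamma$, so that $\partial_s$ falls both on $D_t$--factors and on the scalar $|\gamma_t|^{-1}$ (whose $s$--derivative is $-\alpha|\gamma_t|^{-1}$), the differentiation generates, besides the plain $D_s$--derivatives of the vector fields involved: (i) curvature terms $R(\gamma_s,\gamma_t)\,\cdot$ obtained each time $D_s$ is commuted past a $D_t$, and, after a further $D_t$, their covariant derivatives ($\nabla R$ terms, such as $(\nabla_{\dot u}R)(v,\dot u)v$); (ii) terms proportional to $\alpha$ coming from the factors $|\gamma_t|^{-1}$; (iii) from $D_s(R(\gamma_s,\bar{\gamma_t})\bar{\gamma_t})$, the contributions $(\nabla_vR)(v,\dot u)\dot u$, $R(v,D_tv-\alpha\dot u)\dot u$ and $R(v,\dot u)(D_tv-\alpha\dot u)$. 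One then sets $s=0$, substituting $|\gamma_t|=1$, $\bar{\gamma_t}=\dot u$, $D_s\gamma_s=0$, $D_s\bar{\gamma_t}=D_tv-\alpha\dot u$, $\nabla_{\bar{\gamma_t}}^2\gamma_s=D_t^2v$, $\partial_t\alpha=\scal{D_t^2v}{\dot u}+\scal{D_tv}{D_t\dot u}$ and $|D_t\dot u|^2=\kappa^2$, and simplifies using $\scal{D_t\dot u}{\dot u}=0$, the first Bianchi identity, the pair symmetries of $R$ and the identity $\scal{R(a,b)c}{d}=\scal{R(c,d)a}{b}$ to collect like terms; after an integration by parts in $t$ where needed, the result matches the first two lines of \eqref{hessienne}, which completes the proof. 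The difficulty is entirely one of organisation: the three successive covariant derivatives in the definition of $\kappa$ together with the non--unit speed of $\gamma(s,\cdot)$ for $s\neq0$ produce a considerable number of curvature and $\nabla R$ contributions that must be tracked and combined carefully, but no idea beyond Lemma \ref{l1} and Proposition \ref{commute} is required.
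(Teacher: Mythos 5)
Your plan follows the paper's proof quite closely: in both cases one picks a variation satisfying $D_s\partial_s\gamma\equiv 0$, writes
\[
\frac{d^2}{ds^2}E_\sigma(\gamma(s,\cdot))=\int_{S^1}\sigma^2\big(\partial_s^2\kappa^2\big)|\gamma_t|+2\sigma^2\big(\partial_s\kappa^2\big)\big(\partial_s|\gamma_t|\big)+(1+\sigma^2\kappa^2)\big(\partial_s^2|\gamma_t|\big)\,d\leb^1,
\]
and evaluates at $s=0$. Your choice of $\gamma(s,t)=\exp_{u(t)}(sv(t))$ is a cleaner way than the paper's Cauchy--Lipschitz argument to achieve $D_s\gamma_s\equiv0$, and your treatment of the two ``easy'' groups — the cross term $2\sigma^2\int(\partial_s\kappa^2)(\partial_s|\gamma_t|)$ and the $\int(1+\sigma^2\kappa^2)\partial_s^2|\gamma_t|$ term — is correct and identical in substance to the paper's.

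However, there is a genuine gap. For the dominant term $\sigma^2\int\partial_s^2\kappa^2$, which produces the entire first four-line block of \eqref{hessienne} — including the two $\nabla R$ contributions $\scal{\D_{\dot u}R(v,\dot u)v}{D_t\dot u}$, $\scal{\D_vR(v,\dot u)\dot u}{D_t\dot u}$, the five distinct $\scal{R(\cdot,\cdot)\cdot}{D_t\dot u}$ terms with their specific coefficients $1,-1,3,1,2$, and the term $-8\scal{D_tv}{\dot u}\scal{D_t^2v}{D_t\dot u}$ — you merely \emph{describe} the types of terms that should arise (curvature commutators, $\nabla R$ terms, $\alpha$-corrections) and then assert that ``the result matches.'' You do not actually expand $D_s$ of the four pieces of $D_sD_t\bar{\gamma_t}$ (the paper's (I)--(IV)), take the required inner products, simplify via $\scal{\dot u}{D_t\dot u}=0$ and the curvature symmetries, integrate by parts the $\scal{R(v,\dot u)\dot u}{D_t^2v}$ term, and verify the coefficients. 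Because the final formula is an identity whose entire content lies in these coefficients, a qualitative roadmap of the bookkeeping cannot substitute for the calculation: an error of sign or weight in any one term would go undetected. You yourself flag this step as ``the bulk of the work and the only real obstacle,'' and that assessment is exactly right — it is precisely the part that is missing. (A smaller slip: the $\sigma^2\partial_s^2\kappa^2$ term produces the first \emph{four} displayed lines of \eqref{hessienne}, not ``the first two lines.'')
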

\begin{proof}
We may then choose a variation $\gamma$ such that
\begin{equation}\label{edo}
\left\{
\begin{aligned}
&D_s\partial_s\gamma=0\\
&\gamma(0,\cdot)=u\\
&\left.\partial_s\gamma\right|_{s=0}=v
\end{aligned}
\right.
\end{equation}
Indeed, as $u$ is critical point of $E_\sigma$, it is a 
$C^{\nu-1}$ function ($\nu-1\geq 2$). The Cauchy-Lipschitz theorem asserts the existence of a local $C^{\nu-1}$ function defined on an open neighbourhood of $\ens{0}\times S^1$ of this differential system. 

Let us denote with a slight change in the notations $\gamma_t=\dfrac{\partial_t\gamma}{|\partial_t\gamma_t|}$, $\gamma_s=\partial_s\gamma$, $\alpha=\scal{\D_{\gamma_t}\gamma_s}{\gamma_t}$. We will make constant use of the following identity
\begin{equation}\label{noncomm}
\D_{\gamma_s}\D_{\gamma_t}=\D_{\gamma_t}\D_{\gamma_s}+R(\gamma_s,\gamma_t)-\alpha\D_{\gamma_t}.
\end{equation}
which is a direct consequence of \ref{commute}, as $R$ is defined such that
\begin{align*}
R(\gamma_s,\gamma_t)=\D_{\gamma_s}\D_{\gamma_t}-\D_{\gamma_t}\D_{\gamma_s}-\D_{\lie{\gamma_s}{\gamma_t}}.
\end{align*}
As $\lie{\gamma_s}{\gamma_t}=-\alpha\gamma_t$, the preceding equation is equivalent to \eqref{noncomm}.

We shall also use the notations $D_t=\nabla_{\gamma_t}$, $D_s=\nabla_{\gamma_s}$. As a consequence \eqref{noncomm} reads
\[
D_sD_t=D_tD_s+R(\gamma_s,\gamma_t)-\scal{D_t\gamma_s}{\gamma_t}D_t
\]
with $\alpha=\scal{D_t\gamma_s}{\gamma_t}$. Finally, one has $[\gamma_s,\gamma_t]=-\alpha\gamma_t$, so in our new notation, this gives
\begin{align*}
D_s\gamma_t=D_t\gamma_s-\scal{D_t\gamma_s}{\gamma_t}\gamma_t=D_t\gamma_s-\alpha\gamma_t.
\end{align*}
Recall that 
\[
\kappa^2=\scal{D_t\gamma_t}{D_t\gamma_t}.
\]
We shall now proceed with the calculus of the second derivative of $\kappa^2$. By compatibility of the metric with $\D$, we have
\begin{align*}
\partial_s^2\kappa^2=2\scal{D_s^2D_t\gamma_t}{D_t\gamma_t}+2\scal{D_sD_t\gamma_t}{D_sD_t\gamma_t}=2\left\{(1)+(2)\right\}.
\end{align*}
now
\begin{align*}
D_sD_t\gamma_t&=D_tD_s\gamma_t+R(\gamma_s,\gamma_t)\gamma_t-\scal{D_t\gamma_s}{\gamma_t}D_t\gamma_t\\
&=D_t\left(D_t\gamma_s-\scal{D_t\gamma_s}{\gamma_t}\gamma_t\right)+R(\gamma_s,\gamma_t)\gamma_t-\scal{D_t\gamma_s}{\gamma_t}D_t\gamma_t\\
&=D_t^2\gamma_s-(\scal{D_t^2\gamma_s}{\gamma_t}+\scal{D_t\gamma_s}{D_t\gamma_t})\gamma_t-2\scal{D_t\gamma_s}{\gamma_t}D_t\gamma_t+R(\gamma_s,\gamma_t)\gamma_t\\
&=\mathrm{(I)}-\mathrm{(II)}-2\mathrm{(III)}+\mathrm{(IV)}.
\end{align*}
We split the computation into four parts.
\begin{align*}
D_s\mathrm{(I)}=D_sD_t^2\gamma_s&=D_tD_sD_t\gamma_s+R(\gamma_s,\gamma_t)D_t\gamma_s-\alpha D_t^2\gamma_s\\
&=D_t\left(D_tD_s\gamma_s+R(\gamma_s,\gamma_t)\gamma_s-\alpha D_t\gamma_s\right)+R(\gamma_s,\gamma_t)D_t\gamma_s-\alpha D_t^2\gamma_s\\
&=D_tR(\gamma_s,\gamma_t)\gamma_s+R(D_t\gamma_s,\gamma_t)\gamma_s+R(\gamma_s,D_t\gamma_t)\gamma_s+R(\gamma_s,\gamma_t)D_t\gamma_s\\
&\quad\quad\quad\quad\quad\quad\quad\;-(\partial_t\alpha)D_t\gamma_s-\alpha D_t^2\gamma_s+R(\gamma_s,\gamma_t)D_t\gamma_s-\alpha D_t^2\gamma_s\\
&=D_tR(\gamma_s,\gamma_t)\gamma_s+R(D_t\gamma_s,\gamma_t)\gamma_s+R(\gamma_s,D_t\gamma_t)\gamma_s+2R(\gamma_s,\gamma_t)D_t\gamma_s\\
&\quad\quad\quad\quad\quad\quad\quad\quad\quad\quad\quad\quad\quad\quad\;\,-(\partial_t\alpha)D_t\gamma_s-2\alpha D_t^2\gamma_s
\end{align*}

We recall the notation $\alpha=\scal{D_t\gamma_s}{\gamma_t}$. As $\mathrm{(II)}=\partial_t\alpha \gamma_t$,
\begin{align*}
D_s\mathrm{(II)}&=\partial_s\partial_t\alpha\gamma_t+\partial_t\alpha D_s\gamma_t\\
&=\partial_s\partial_t\alpha\gamma_t+\partial_t\alpha D_t\gamma_s-\alpha\partial_t\alpha\gamma_t
\end{align*}
Furthermore,
\begin{align*}
\partial_s\alpha&=\scal{D_sD_t\gamma_s}{\gamma_t}+\scal{D_t\gamma_s}{D_s\gamma_t}\\
&=\scal{D_tD_s\gamma_s+R(\gamma_s,\gamma_t)\gamma_s-\alpha D_t\gamma_s}{\gamma_t}+\scal{D_t\gamma_s}{D_t\gamma_s-\alpha\gamma_t}\\
&=|D_t\gamma_s|^2-2\alpha^2-\scal{R(\gamma_t,\gamma_s)\gamma_s}{\gamma_t}.
\end{align*}
Recalling that $\mathrm{(III)}=\alpha D_t\gamma_t$, one has
\begin{align*}
D_s\mathrm{(III)}&=\left(|D_t\gamma_s|^2-2\alpha^2-\scal{R(\gamma_t,\gamma_s)\gamma_s}{\gamma_t}\right)D_t\gamma_t+\alpha D_sD_t\gamma_t\\
&=\left(|D_t\gamma_s|^2-2\alpha^2-\scal{R(\gamma_t,\gamma_s)\gamma_s}{\gamma_t}\right)D_t\gamma_t
+\alpha\left\{D_t^2\gamma_s-(\partial_t\alpha)\gamma_t-2\alpha D_t\gamma_t+R(\gamma_s,\gamma_t)\gamma_t\right\}\\
&=\left(|D_t\gamma_s|^2-4\alpha^2-\scal{R(\gamma_t,\gamma_s)\gamma_s}{\gamma_t}\right)D_t\gamma_t
+\alpha\left\{D_t^2\gamma_s-(\partial_t\alpha)\gamma_t+R(\gamma_s,\gamma_t)\gamma_t\right\}.
\end{align*}
According to the defining properties of the Riemannian curvature tensor $R$, we have
\begin{align*}
D_s\mathrm{(IV)}=D_s(R(\gamma_s,\gamma_t)\gamma_t)&=D_sR(\gamma_s,\gamma_t)\gamma_t+R(D_s\gamma_s,\gamma_t)\gamma_t+R(\gamma_s,D_s\gamma_t)\gamma_t+R(\gamma_s,\gamma_t)D_s\gamma_t
\\&=D_sR(\gamma_s,\gamma_t)\gamma_t+R(\gamma_s,D_t\gamma_s)\gamma_t+R(\gamma_s,\gamma_t)D_t\gamma_s-2\alpha R(\gamma_s,\gamma_t)\gamma_t
\end{align*}
as $D_s\gamma_t=D_t\gamma_s-\alpha\gamma_t$.
If we parametrise $u$ in arc-length, then
\begin{align*}
\scal{\gamma_t}{D_t\gamma_t}=0.
\end{align*}
In $s=0$, we have
\begin{align*}
\scal{D_s\mathrm{(II)}}{D_t\gamma_t}=\partial_t\alpha\scal{D_t\gamma_s}{D_t\gamma_t}=\left(\scal{D_t^2\gamma_s}{\gamma_t}+\scal{D_t\gamma_s}{D_t\gamma_t}\right)\scal{D_t\gamma_s}{D_t\gamma_t}
\end{align*}
and
\begin{align*}
\scal{D_s\mathrm{(III)}}{D_t\gamma_t}&=\left(|D_t\gamma_s|^2-2\alpha^2-\scal{R(\gamma_t,\gamma_s)\gamma_s}{\gamma_t}\right)\kappa^2+
\scal{D_t\gamma_s}{\gamma_t}\scal{D_t^2\gamma_s}{D_t\gamma_t}.
\end{align*}
The first term of the second derivative is
\begin{align*}
(1)&=\scal{D_tR(\gamma_s,\gamma_t)\gamma_s+R(D_t\gamma_s,\gamma_t)\gamma_s+R(\gamma_s,D_t\gamma_t)\gamma_s+2R(\gamma_s,\gamma_t)D_t\gamma_s}{D_t\gamma_t}\tag*{(I)}\\
&-\left(\scal{D_t^2\gamma_s}{\gamma_t}+\scal{D_t\gamma_s}{D_t\gamma_t}\right)\scal{D_t\gamma_s}{D_t\gamma_t}-2\scal{D_t\gamma_s}{\gamma_t}\scal{D_t^2\gamma_s}{D_t\gamma_t}\tag*{(I)}\\
&-\left(\scal{D_t^2\gamma_s}{\gamma_t}+\scal{D_t\gamma_s}{D_t\gamma_t}\right)\scal{D_t\gamma_s}{D_t\gamma_t}\tag*{(II)}\\
&-2\left(|D_t\gamma_s|^2-4\scal{D_t\gamma_s}{\gamma_t}^2-\scal{R(\gamma_t,\gamma_s)\gamma_s}{\gamma_t}\right)\kappa^2-2\scal{D_t\gamma_s}{\gamma_t}\scal{D_t^2\gamma_s}{D_t\gamma_t}\tag*{(III)}\\
&+\scal{D_sR(\gamma_s,\gamma_t)\gamma_t}{D_t\gamma_t}+\scal{R(\gamma_s,D_t\gamma_s)\gamma_t}{D_t\gamma_t}+\scal{R(\gamma_s,\gamma_t)D_t\gamma_t}{D_t\gamma_s}\tag*{(IV)}\\
&-2\scal{D_t\gamma_s}{\gamma_t}\scal{R(\gamma_s,\gamma_t)\gamma_t}{D_t\gamma_t}\tag*{(IV)}
\end{align*}
while
\begin{align*}
(2)&=\left|D_t^2\gamma_s-(\scal{D_t^2\gamma_s}{\gamma_t}+\scal{D_t\gamma_s}{D_t\gamma_t})\gamma_t-2\scal{D_t\gamma_s}{\gamma_t}D_t\gamma_t+R(\gamma_s,\gamma_t)\gamma_t\right|^2\\
&=|D_t^2\gamma_s|^2+\left(\scal{D_t^2\gamma_s}{\gamma_t}+\scal{D_t\gamma_s}{D_t\gamma_t}\right)^2+4\scal{D_t\gamma_s}{\gamma_t}\kappa^2+|R(\gamma_s,\gamma_t)\gamma_t|^2\\
&-2\left(\scal{D_t^2\gamma_s}{\gamma_t}+\scal{D_t\gamma_s}{D_t\gamma_t}\right)\scal{D_t^2\gamma_s}{\gamma_t}-4\scal{D_t\gamma_s}{\gamma_t}\scal{D_t^2\gamma_s}{D_t\gamma_t}+2\scal{R(\gamma_s,\gamma_t)\gamma_t}{D_t^2\gamma_s}\\
&-4\scal{D_t\gamma_s}{\gamma_t}\scal{R(\gamma_s,\gamma_t)\gamma_t}{D_t\gamma_t}
\end{align*}
We deduce that in $s=0$, we have
\begin{align*}
\partial_s^2\kappa^2&=2|D_t^2\gamma_s|^2+2|R(\gamma_s,\gamma_t)\gamma_t|^2
+4\left(4\scal{D_t\gamma_s}{\gamma_t}^2+2\scal{D_t\gamma_s}{\gamma_t}-|D_t\gamma_s|^2+\scal{R(\gamma_t,\gamma_s)\gamma_s}{\gamma_t}\right)\kappa^2\\
&-2\left(\scal{D_t^2\gamma_s}{\gamma_t}
+\scal{D_t\gamma_s}{D_t\gamma_t}\right)^2
-16\scal{D_t\gamma_s}{\gamma_t}\scal{D_t^2\gamma_s}{D_t\gamma_t}\\
&+2\scal{D_tR(\gamma_s,\gamma_t)\gamma_s}{D_t\gamma_t}+
2\scal{D_sR(\gamma_s,\gamma_t)\gamma_t}{D_t\gamma_t}+2\scal{R(D_t\gamma_s,\gamma_t)\gamma_s}{D_t\gamma_t}\\
&+2\scal{R(\gamma_s,D_t\gamma_t)\gamma_s}{D_t\gamma_t}
+6\scal{R(\gamma_s,\gamma_t)D_t\gamma_s}{D_t\gamma_t}
+2\scal{R(\gamma_t,D_t\gamma_s)\gamma_t}{D_t\gamma_t}\\
&+4\scal{R(\gamma_s,\gamma_t)\gamma_t}{D_t^2\gamma_s}
-12\scal{D_t\gamma_s}{\gamma_t}\scal{R(\gamma_s,\gamma_t)\gamma_t}{D_t\gamma_t}
\end{align*}

Now
\begin{align*}
\partial_s\kappa^2=2\scal{D_t^2v}{D_t\dot{u}}-4\scal{D_tv}{\dot{u}}\kappa^2+2\scal{R(v,\dot{u})\dot{u}}{D_t\dot{u}},
\end{align*}
and
\begin{align*}
&\int_{S^1}\left(1+\sigma^2\kappa^2\right)\partial_s^2|\gamma_t|d\leb^1=\int_{S^1}\left(1+\sigma^2\right)\partial_s\scal{\D_{\gamma_t}\gamma_s}{\frac{\gamma_t}{|\gamma_t|}}d\leb^1\\
&=\int_{S^1}\left(1+\sigma^2\kappa^2\right)\left\{\scal{\D_{\gamma_s}D_{\gamma_t}\gamma_s}{\frac{\gamma_t}{|\gamma_t|}}+\scal{\D_{\gamma_t}\gamma_s}{\frac{\D_{\gamma_s}\gamma_t}{|\gamma_t|}}-\scal{\D_{\gamma_t}\gamma_s}{-\scal{\D_{\gamma_t}\gamma_s}{\gamma_t}\frac{\gamma_t}{|\gamma_t|^3}}\right\}d\leb^1\\
&=\int_{S^1}(1+\sigma^2\kappa^2)\left\{\scal{\D_{\gamma_t}\D_{\gamma_s}\gamma_s}{\frac{\gamma_t}{|\gamma_t|}}+\scal{R(\gamma_s,\gamma_t)\gamma_s}{\frac{\gamma_t}{|\gamma_t|}}+|\D_{\frac{\gamma_t}{|\gamma_t|}}\gamma_s|^2|\gamma_t|-\scal{\D_{\frac{\gamma_t}{|\gamma_t|}}\gamma_s}{\gamma_t}^2|\gamma_t|\right\}d\leb^1.
\end{align*}

As $D_s\gamma_s=0$, at $s=0$, the preceding equation is equal to
\begin{align*}
\int_{0}^L\left(1+\sigma^2\kappa^2(u)\right)\left\{|D_tv|^2-\scal{D_tv}{\dot{u}}^2-\scal{R(\dot{u},v)v}{\dot{u}}\right\}d\leb^1.
\end{align*}
Furthermore
\[
\frac{d^2}{ds^2}E_\sigma(\gamma(s,\cdot))=\int_{S^1}\sigma^2\partial_s^2\kappa^2|\gamma_t|+2\sigma^2\partial_s\kappa^2\partial_s|\gamma_t|+(1+\sigma^2\kappa^2)\partial_s^2|\gamma_t|d\leb^1
\]

Finally, we deduce that
\begin{align}
D^2E_{\sigma}(u)[v,v]&=2\sigma^2\int_0^{L}|D_t^2v|^2+|R(v,\dot{u})\dot{u}|^2+2\left(4\scal{D_tv}{\dot{u}}^2+2\scal{D_tv}{\dot{u}}-|D_tv|^2+\scal{R(\dot{u},v)v}{\dot{u}}\right)\kappa^2(u)\nonumber\\
&-\left(\scal{D_t^2v}{\dot{u}}+\scal{D_tv}{D_t\dot{u}}\right)^2-8\scal{D_tv}{\dot{u}}\scal{D_t^2v}{D_t\dot{u}}+\scal{\D_{\dot{u}}R(v,\dot{u})v}{D_t\dot{u}}\nonumber\\
&+\scal{\D_{v}R(v,\dot{u})\dot{u}}{D_t\dot{u}}+\scal{R(D_tv),\dot{u})v}{D_t\dot{u}}-\scal{R(D_t\dot{u},v)v}{D_t\dot{u}}+3\scal{R(v,\dot{u})D_tv}{D_t\dot{u}}\nonumber\\
&+\scal{R(\dot{u},D_tv)\dot{u}}{D_t\dot{u}}+2\scal{R(v,\dot{u})D_tv}{D_t\dot{u}}-6\scal{D_tv}{\dot{u}}\scal{R(v,\dot{u})\dot{u}}{D_t\dot{u}}d\leb^1\nonumber\\
&+4\sigma^2\int_0^{L}\scal{D_tv}{\dot{u}}\left(\scal{D_t^2v}{D_t\dot{u}}-2\scal{D_tv}{\dot{u}}\kappa^2(u)+\scal{R(v,\dot{u})\dot{u}}{D_t\dot{u}}\right)d\leb^1\nonumber\\
&+\int_0^{L}\left(1+\sigma^2\kappa^2(u)\right)\left(|D_tv|^2-\scal{D_tv}{\dot{u}}^2-\scal{R(\dot{u},v)v}{\dot{u}}\right)d\leb^1
\end{align}
which concludes the proof of the proposition.
\end{proof}
We will use later the result to investigate the index of the curves in section \ref{indice}.

\section{Palais-Smale Condition}

We recall the definition of the Palais-Smale condition.
\begin{defi}
Let $X$ be a Finsler $C^\nu$ manifold ($\nu\in\N\cup\ens{\infty})$, and $f\in C^1(X)$. We say that $f$ satisfies the Palais-Smale condition at the level $c\in\R$ if for every sequence $\ens{x_n}_{n\in\N}\subset X$, if 
\begin{align*}
f(x_n)\conv c,\quad \text{and}\quad Df(x_n)\conv 0
\end{align*}
then there exist a subsequence of $\ens{x_n}_{n\in \N}$ (strongly) converging towards an element $x\in X$.
\end{defi}

The main result of this section is the following (contained in another closed form in \cite{LZ2}).
\begin{theorem}\label{palaissmale}
Let $\sigma,c>0$ two positive real numbers. Then $E_\sigma$ satisfies the Palais-Smale function at the level $c$.
\end{theorem}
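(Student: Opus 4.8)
The plan is to verify the two defining conditions of the Palais-Smale property for $E_\sigma$ at a fixed level $c>0$ with $\sigma>0$ fixed: given a sequence $\ens{u_n}_{n\in\N}\subset\im$ with $E_\sigma(u_n)\to c$ and $DE_\sigma(u_n)\to 0$ in $(\wtn)^*$, one must extract a subsequence converging strongly in $\mathrm{W}^{2,2}(S^1,M)$ to some $u\in\im$. The key structural fact is that the $\sigma^2$-term of the energy controls $\int_{S^1}|D_t\dot u_n|^2/|\dot u_n|^3\,d\leb^1$ (the scale-invariant curvature energy), and since $\sigma>0$ is fixed, an energy bound $E_\sigma(u_n)\le c+1$ gives a genuine bound on $\int \kappa^2(u_n)|\dot u_n|\,d\leb^1$ as well as on the length $\mathfrak{L}(u_n)$.

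First I would pass to the arc-length-type normalization: reparametrize each $u_n$ proportionally to arc length so that $|\dot u_n|\equiv L_n/2\pi$ is constant, where $L_n=\mathfrak{L}(u_n)$. Reparametrization is an isometry of $\im$ for the relevant structure and leaves $E_\sigma$, $DE_\sigma$ and the index invariant, so this is harmless. The length bound $L_n\le C$ gives a uniform $\mathrm{W}^{1,\infty}$ bound, and one needs a lower bound $L_n\ge\delta>0$: this is where $c>0$ enters, since if $L_n\to0$ then (using that the curvature term scales so as to blow up, or directly from $\beta(0)>0$-type considerations) one derives a contradiction with $E_\sigma(u_n)\to c>0$; more carefully, a short closed curve on a compact manifold can be capped, and the first variation being small forces it to be close to a point, contradicting positivity of $c$ — alternatively one invokes that $E_\sigma(u_n)\ge\mathfrak{L}(u_n)=L_n$ so $L_n\to0$ would give $c=0$. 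With $\delta\le L_n\le C$ we get $|\dot u_n|$ bounded above and below by positive constants, hence $\kappa^2(u_n)=|D_t\dot u_n|^2|\dot u_n|^{-4}$ and the curvature-energy bound yield a uniform bound on $\norm{D_t\dot u_n}_{\lp2{S^1}}$, i.e. on $\norm{\ddot u_n}_{\lp2{S^1}}$ (the Christoffel-symbol correction being controlled by the $\mathrm{W}^{1,\infty}$ bound and compactness of $M$). Thus $\ens{u_n}$ is bounded in $\mathrm{W}^{2,2}(S^1,\R^q)$.

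Next, by weak compactness and the compact embedding $\mathrm{W}^{2,2}(S^1)\hookrightarrow C^{1,\alpha}(S^1)$, extract a subsequence with $u_n\rightharpoonup u$ weakly in $\mathrm{W}^{2,2}$ and $u_n\to u$ in $C^1$; the $C^1$ convergence and the lower bound on $|\dot u_n|$ ensure $|\dot u|\ge\delta/2\pi>0$, so $u\in\im$, and $u(t)\in M$ for all $t$ since $M$ is closed in $\R^q$. It remains to upgrade to strong $\mathrm{W}^{2,2}$ convergence. For this I would test the Euler–Lagrange-type expression $DE_\sigma(u_n)\cdot v$ against the natural comparison vector field $v_n=\Pi_{u_n}(\ddot u_n-\ddot u)$, the tangential projection of the difference of second derivatives (which lies in $\wtn$), and similarly use $DE_\sigma(u_n)\to0$ together with the already-established $C^1$ convergence to show that all lower-order terms in \eqref{firstvar} converge, leaving the leading term $2\sigma^2\int_0^{L_n}\scal{D_t^2 v_n}{D_t\dot u_n}$, which is (up to controlled errors) $2\sigma^2\norm{D_t^2(u_n-u)}_{\lp2{}}^2$; since $\sigma>0$ this forces $D_t\dot u_n\to D_t\dot u$ strongly in $\mathrm{W}^{1,2}$, hence $\ddot u_n\to\ddot u$ in $\lp2{S^1}$. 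Combined with $C^1$ convergence this gives $u_n\to u$ strongly in $\mathrm{W}^{2,2}(S^1,M)$.

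The main obstacle I expect is the strong-convergence (third) step: one must carefully choose an admissible test field $v_n\in\wtn$ realizing the difference of the highest-order terms, handle the fact that $v_n$ must be tangent to $M$ along $u_n$ (so $\ddot u_n-\ddot u$ has to be projected, generating curvature-of-$M$ error terms that are only controlled in $C^0$, which is enough), and show that the fourth-order bilinear form $v\mapsto 2\sigma^2\int|D_t^2 v|^2+(\text{lower order})$ appearing as the leading part of $DE_\sigma(u_n)\cdot v_n - DE_\sigma(u)\cdot v_n$ is coercive on $\wtn$ uniformly in $n$ — this uses the uniform two-sided bounds on $|\dot u_n|$ and a Poincaré/Gårding-type inequality on $S^1$. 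The subsidiary delicate point is the lower bound $L_n\ge\delta>0$, which is exactly where the hypothesis $c>0$ is indispensable and without which the statement is false.
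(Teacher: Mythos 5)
Your overall strategy is the right one — obtain a $\mathrm{W}^{2,2}$ bound, extract a weakly convergent subsequence with $C^1$ convergence, and then exploit $DE_\sigma(u_n)\to 0$ by testing against a well-chosen vector field to upgrade to strong $\mathrm{W}^{2,2}$ convergence — and the paper's proof follows the same skeleton. However there is a genuine error in the decisive third step. Your test field $v_n=\Pi_{u_n}(\ddot u_n-\ddot u)$ is only in $\mathrm{L}^2(S^1,TM)$, not in $\mathrm{W}^{2,2}_{u_n}(S^1,TM)$: one is projecting an $\mathrm{L}^2$ object, so $v_n$ inherits no more regularity. But $DE_\sigma(u_n)$ is a continuous linear functional on $\mathrm{W}^{2,2}_{u_n}(S^1,TM)$ only — the variation formula \eqref{firstvar} involves $D_t^2 v$ explicitly — so the pairing $DE_\sigma(u_n)\cdot v_n$ is undefined for your $v_n$; one would have to speak of $D_t^2 v_n$, i.e. a fourth derivative of $u_n$, which does not exist in $\mathrm{L}^2$. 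What the paper actually does, and what you should do, is take the much lower-order test field $v_n=P(u_n)(u_n-u)\in\mathrm{W}^{2,2}_{u_n}(S^1,TM)$. Then $D_t^2 v_n=P(u_n)(\ddot u_n-\ddot u)+\text{(lower order terms involving }DP,D^2P)$, and in the leading contribution $2\sigma^2\int\scal{D_t^2 v_n}{D_t\dot u_n}|\dot u_n|$ the quantity you wanted appears of its own accord after writing $D_t\dot u_n=P(u_n)\ddot u_n$ and separating $|P(u_n)(\ddot u_n-\ddot u)|^2$ from the cross-term $\scal{P(u_n)\ddot u}{P(u_n)(\ddot u_n-\ddot u)}$, the latter vanishing by weak $\mathrm{L}^2$ convergence of $D_t\dot u_n$. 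So the correct test field is two derivatives lower than what you proposed; with your choice the argument cannot even be started.

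A secondary gap concerns the length lower bound. The argument ``$E_\sigma(u_n)\ge\mathfrak{L}(u_n)=L_n$, so $L_n\to 0$ would give $c=0$'' is false: $L_n\to 0$ together with $E_\sigma(u_n)\ge L_n$ yields only $c\ge 0$, which is no contradiction, because the curvature term $\sigma^2\int\kappa^2(u_n)|\dot u_n|$ could keep $E_\sigma(u_n)$ bounded away from zero. The correct mechanism is the opposite one: if $L_n\to 0$, a Fenchel-type inequality $\int_0^{L_n}\kappa(u_n)\,ds\ge 2\pi-o(1)$ combined with Cauchy--Schwarz gives $\int_0^{L_n}\kappa^2(u_n)\,ds\ge (2\pi-o(1))^2/L_n\to\infty$, so $E_\sigma(u_n)\to\infty$, contradicting $E_\sigma(u_n)\to c<\infty$. (The paper asserts the bound $L_n\ge\epsilon c$ without proof, but your stated justification would not deliver it.) The remaining steps — arc-length normalization, Poincaré--Wirtinger to bound $\int|\dot u_n|^2$ via the curvature energy, and the compact embeddings giving $C^1$-convergence to a genuine immersion — agree with the paper and are fine.
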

\begin{proof}
Let $\ens{u_n}_{n\in\N}\subset \im$ a sequence such that
\begin{align*}
E(u_n)\conv c,\quad\text{and}\quad DE(u_n)\conv 0
\end{align*}
The second hypothesis should be interpreted as
\begin{equation}
\lim_{n\rightarrow\infty}\sup\ens{DE_\sigma(u)\cdot v,\;\nwp{v}{2,2}{S^1}\leq 1}=0
\end{equation}
where we recall that
\begin{equation}
DE_\sigma(u)\cdot v=\int_{S^1}\left(2\sigma^2\scal{D_t\dot{u}_n}{D_t^2v}+(1-3\sigma^2\kappa^2(u_n))\scal{\dot{u_n}}{D_t v}+2\sigma^2\scal{R(D_t\dot{u}_n,\dot{u}_n)\dot{u}_n}{D_t v}\right)|\dot{u}|d\leb^1
\end{equation}
In particular, there exists a constant $\epsilon>0$ such that
\[
L_n=\mathfrak{L}(u_n)=\int_{S^1}|\dot{u}_n|d\leb^1\geq  \epsilon c,\;\text{et}\; E_\sigma(u_n)\leq 2c
\]
for all $n$ great enough. We may then assume this property for all $n\in\N$.
As the manifold $(M,g)$ is compact, 
\[
\sup_{n\in\N}\np{u_n}{\infty}<\infty,
\]
so obviously
\[
\sup_{n\in\N}\np{u_n}{2}<\infty.
\]
Furthermore, the periodicity of $u_n$ implies that
\[
\int_{S^1}\dot{u_n}\,d\leb^1=0,
\]
so according to Poincaré-Wirtinger inequality, 
\begin{align*}
\int_{0}^{L_n}|\dot{u}_n|^2d\leb^1\leq \left(\frac{L_n}{2\pi}\right)^2\int_{0}^{L_n}|D_t\dot{u}_n|^2d\leb^1=\left(\frac{L_n}{2\pi}\right)^2\int_{S^1}\kappa^2(u_n)|\dot{u}_n|d\leb^1\leq \left(\frac{L_n}{2\pi}\right)^2\frac{2c}{\pi\sigma^2}\leq \frac{2c^3}{\pi^2\sigma^2}
\end{align*}
thus
\begin{align*}
\np{\dot{u_n}}{2}^2&=\int_{S^1}|\dot{u}_n|^2d\leb^1=\int_{0}^{L_n}|\dot{u}_n|d\leb^1
\leq \sqrt{L_n}{\left(\int_0^{L_n}|\dot{u}_n|^2d\leb^1\right)}^{\frac{1}{2}}
\leq \frac{2c^2}{\pi\sigma}.
\end{align*}
We deduce that
\begin{equation}
\sup_{n\in\N}\nwp{u_n}{2,2}{S^1}<\infty.
\end{equation}
By Cauchy-Schwarz inequality, for all $(x,y)\in S^1\times S^1$,
\begin{align*}
|u_n(x)-u_n(y)|&=\left|\int_{S^1}\dot{u}_n(t)d\leb^1t\right|\\
&\leq \sqrt{|x-y|}\np{\dot{u}_n}{2}
\end{align*}
so the sequence $\ens{u_n}_{n\in\N}$ is equicontinuous, and likewise
\begin{equation}
|\dot{u}_n(x)-\dot{u}_n(y)|\leq \sqrt{|x-y|}\np{D_t\dot{u}_n}{2}.
\end{equation}
By Sobolev embedding theorem, $\W{2,2}{S^1}\hookrightarrow C^{1,\frac{1}{2}}(S^1)$, so according to Arzelà–Ascoli and Rellich-Kondrachov theorems, there exist a subsequence of $\ens{u_n}_{n\in\N}$ (still denoted $\ens{u_n}_{n\in\N}$), and a function $u\in \W{2,2}{S^1}$ such that
\begin{align}\label{convergence}
&u_n\rightharpoonup u\quad \text{weakly in}\;\,\W{2,2}{S^1}\nonumber\\
&u_n\rightarrow u\quad\text{strongly}\;\, \lp{\infty}{S^1}\\
&\dot{u}_n\rightarrow \dot{u}\quad \text{strongly}\;\, \lp{\infty}{S^1}\nonumber
\end{align}
In particular,
\begin{equation}\label{w12}
\lim_{n\rightarrow\infty}\nwp{u_n-u}{1,2}{S^1}=0.
\end{equation}
Furthermore, assume for one moment that $\ens{\dot{u}_n}_{n\in\N}$ is given in normal parametrization, such that $|\dot{u}_n|=1$ on $[0,L_n]$. Then, thanks of \eqref{convergence}, if $L=\mathfrak{L}(u)$, we have 
\begin{align*}
	L_n\conv L
\end{align*}
and by uniform convergence of $\ens{\dot{u}_n}_{n\in\N}$ towards $\dot{u}$, we have $|\dot{u}|=1$ on $[0,L]$, so $u$ is a $\mathrm{W}^{1,2}(S^1,M)$ immersion. \vspace{0.2em}By compactness of $S^1$, we deduce that $\ens{D_t\dot{u}_n}_{n\in\N}$ converge in $\mathrm{L}^2(S^1,M)$ to $D_t\dot{u}$ if and only if
\begin{align*}
	\int_{S^1}|P(u_n)(\ddot{u}_n-\ddot{u})|^2|\dot{u}_n|d\leb^1\conv 0.
\end{align*}
As $\ens{u_n}_{n\in\N}$ is bounded in $\im$, we deduce that if $v_n=P(u_n)(u_n-u)$, then
\begin{align*}
\lim_{n\rightarrow\infty}DE_\sigma(u_n)\cdot v_n =0
\end{align*} 
and
\begin{align}
\label{e1}&DE_\sigma(u_n)\cdot v_n=\int_{S^1}\left(2\sigma^2\scal{D_t\dot{u}_n}{P(u_n)(\ddot{u}_n-\ddot{u})+2 DP(u_n)(\dot{u}_n)(\dot{u}_n-\dot{u})
+D^2P(u_n)(\dot{u}_n,\dot{u}_n)(u_n-u))}\right.\\
\label{e2}&+(1-3\sigma^2\kappa^2(u_n))\scal{\dot{u}_n}{P(u_n)(\dot{u_n}-\dot{u})+DP(u_n)(\dot{u}_n)(u_n-u)}\\
\label{e3}&\left.+2\scal{R(D_t\dot{u}_n,\dot{u}_n)\dot{u_n}}{P(u_n)(u_n-u)}\right)|\dot{u}_n| d\leb^1.
\end{align}
As $(M,g)$ is a $C^\nu$ compact manifold, and $P$ is $C^{\nu-1}$ and $\ens{u_n}_{n\in\N}$ is bounded in $\mathrm{W}^{1,\infty}(S^1,M)$. This ensures the existence of a constant $C>0$ independent of $n\in\N$ such that
\begin{align*}
\np{DP(u_n)(\dot{u}_n-\dot{u})}{\infty}&\leq C\np{\dot{u}_n}{\infty}\np{\dot{u}_n-\dot{u}}{\infty}\\
\np{DP(u_n)(\dot{u}_n)(u_n-u)}{\infty}&\leq C\np{\dot{u}_n}{\infty}\np{u_n-u}{\infty}\\
\np{DP(u_n)(\dot{u}_n)(\dot{u}_n-\dot{u})}{2}&\leq C\np{\dot{u}_n}{\infty}\np{\dot{u}_n-\dot{u}}{2}\\
\np{D^2P(u_n)(\dot{u}_n,\dot{u}_n)(u_n-u)}{2}&\leq C\np{\dot{u}_n}{\infty}^2\np{u_n-u}{2}.
\end{align*}

Now $\ens{D_t\dot{u}_n}_{n\in\N}$ is bounded in $\mathrm{L}^2(S^1,TM)$ so by Cauchy-Schwarz inequality,
\begin{align*}
&\left|\int_{S^1}\scal{D_t\dot{u}_n}{2DP(u_n)(\dot{u}_n)(\dot{u}_n-\dot{u})+D^2P(u_n)(\dot{u}_n,\dot{u}_n)(u_n-u)}|\dot{u}_n|d\leb^1\right|\\
&\leq C\np{D_t\dot{u}_n}{2}\np{\dot{u}_n}{\infty}^2\left(2\np{\dot{u}_n-\dot{u}}{2}+\np{\dot{u}_n}{\infty}\np{u_n-u}{2}\right)\conv 0
\end{align*}

We can estimate \eqref{e2} as follows:
\begin{align*}
&\left|\int_{S^1}\left(1+3\sigma^2\kappa^2(u_n)\right)\scal{\dot{u}_n}{P(u_n)(\dot{u_n})(u_n-u)+DP(u_n)(\dot{u}_n)(\dot{u}_n-\dot{u})}|\dot{u}_n|d\leb^1\right|\\
&\leq C\left(1+3\sigma^2\np{D_t\dot{u}_n}{2}^2\right)\np{\dot{u}_n}{2}^3\left(\np{\dot{u}_n-\dot{u}}{2}+\np{u_n-u}{2}\right)\conv 0.
\end{align*}

Finally, the metric $g$ is $C^\nu$, so the $(3,1)$-curvature tensor $R$ is $C^{\nu-2}$, its components are bounded on the compact manifold $(M,g)$ in the following sense: if we write
\begin{align*}
\int_{S^1}\scal{R(D_t\dot{u}_n,\dot{u}_n)\dot{u_n}}{P(u_n)(u_n-u)}|\dot{u}_n|d\leb^1&=
\int_{S^1}\sum_{i,j,k,l=1}^n R_{i,j,k}^lD_t\dot{u}_n^i\dot{u}_n^j\dot{u}_n^k(P(u_n)(u_n-u))^l\,|\dot{u}_n|d\leb^1.
\end{align*}
and define $\displaystyle\np[M]{R}{\infty}=\sup_{1\leq i,j,k,l\leq n}\np[M]{R_{i,j,k}^l}{\infty}$, then $\np[M]{R}{\infty}<\infty$, and
\begin{align*}
&\left|\int_{S^1}\scal{R(D_t\dot{u}_n,\dot{u}_n)\dot{u_n}}{P(u_n)(u_n-u)}|\dot{u}_n|d\leb^1\right|\\
&\leq \np{R}{\infty}\np{D_t\dot{u}_n}{2}{\left(\int_{S^1}|\dot{u}_n|^6|P(u_n)(u_n-u)|^2d\leb^1\right)}^{\frac{1}{2}}\\
&\leq C\np{R}{\infty} \np{D_t\dot{u}_n}{2}\np{\dot{u}_n}{\infty}^3\np{u_n-u}{2}\conv 0.
\end{align*}

The first member of \eqref{e1} is equal to
\begin{align*}
\int_{S^1}\scal{D_t\dot{u}_n}{P(u_n)(\ddot{u}_n-\ddot{u})}|\dot{u}_n|d\leb^1&=\int_{S^1}\left(|P(u_n)(\ddot{u}_n-\ddot{u})|^2+\scal{P(u_n)\ddot{u}}{P(u_n)(\ddot{u}_n-\ddot{u})}\right)|\dot{u}_n|d\leb^1
\end{align*}
As $\ens{D_t\dot{u}_n}_{n\in\N}$ converges weakly towards $D_t\dot{u}$ in $\mathrm{L}^2(S^1,TM)$, and $\ens{P(u_n)\ddot{u}}_{n\in\N}$ is bounded on $\mathrm{L}^2(S^1,TM)$, we have
\[
\lim_{n\rightarrow \infty}\int_{S^1}\scal{D_tu}{P(u_n)(\ddot{u}_n-\ddot{u})}|\dot{u}_n|d\leb^1=0.
\]
We finally deduce that
\[
\lim_{n\rightarrow\infty}\int_{S^1}|D_t\dot{u}_n-P(u_n)\ddot{u}|^2|\dot{u}_n|d\leb^1=0,
\]
so thanks of \eqref{convergence}
\[
\lim_{n\rightarrow\infty}\nwp{u_n-u}{2,2}{S^1}=0.
\]
This concludes the proof of the theorem.
\end{proof}

\section{Min-Max Construction of Adapted Sequence of Critical Points}\label{bonnesuite}

We aim in this section as constructing a sequence of critical points $\ens{u_n}_{n\in\N}$ associated to $\ens{\sigma_n}_{n\in\N}$, where $\ens{\sigma_n}_{n\in\N}$ is a sequence of positive numbers converging to $0$, such that
\[
\sigma_n^2\int_{S^1}\kappa^2(u_n)|\dot{u}_n|d\leb^1\conv 0.
\]
The principle of proof is adapted from  a result of Michael Struwe (see \cite{struweart}).
\begin{defi}\label{admissible}
	A family of non-empty sets $\A\subset\P^*(\mathrm{W}^{2,2}(S^1,M))$ is admissible if the three following conditions are realised by  $\A$:
	\begin{itemize}
		\itemsep-0.8em 
		\item[1)] For all $A\in\A$, for all $u\in\A$, either $u$ is a constant curve either $u\in\im$,\\
		\item[2)] For every homemorphism $\phi$ of $\im$ isotopic to the identity map, for all $A\in\A$, $\phi(A)\in A$,\\
		\item[3)] There exists a positive integer $k\in\N$ such that for all $A\in\A$, we can write $A=\ens{u_t^A}_{t\in [0,1]^k}$, and the map
		\vspace{-0.5em}
		\begin{align*}
		[0,1]^k&\rightarrow\mathrm{W}^{2,2}(S^1,M)\\
		t&\mapsto u_t^A
		\end{align*}
		\vspace{-0.5em}
		is continuous.		
	\end{itemize}
\end{defi}

We now fix an admissible set $\mathscr{A}\subset\mathscr{P}^*\left(\mathrm{W}^{2,2}(S^1,M)\right)$ such that
\begin{align*}
0<\beta(0)=\inf_{A\in\mathscr{A}}\sup \mathfrak{L}(A)<\infty.
\end{align*}
We then define the family $\mathscr{A}_0\in\P^*(\im)$ from $\mathscr{A}$ by
\begin{align*}
\mathscr{A}_0=\ens{A_{0}, A\in\mathscr{A}\;\text{et}\; A_0\neq\varnothing},
\end{align*}
where for all $A\in\mathscr{A}$, 
\begin{align*}
A_0=A\cap\ens{u:\mathfrak{L}(u)\geq \frac{\beta(0)}{2}}.
\end{align*}
We remark that if $\phi$ is an homeomorphism of $\im$ isotopic to the identity, in genral, $\phi(A_0)\neq \phi(A)_0$.
For all $\sigma>0$, we define
\[
\beta(\sigma)=\inf_{A_0\in\mathscr{A}_0} \sup E_\sigma(A_0)<\infty
\]
Indeed, the function $\mathfrak{L}$ is continuous on $\mathrm{W}^{2,2}(S^1,M)$, and for all $A_0\in \mathscr{A}_0$, $A_0=\ens{u_t}_{t\in I}$, where $I$ is a closed subset of $[0,1]^k$, so the application $I\rightarrow \im$, $t\rightarrow u_t$ is continuous thus
\begin{align*}
\sup E_\sigma(A_0)=\sup_{t\in I}E_\sigma(u_t)<\infty
\end{align*}
and $\beta(\sigma)<\infty$. 

We now observe that
\begin{align*}
\beta(\sigma)\conv \beta(0).
\end{align*}
To prove this claim, remark that for all $\sigma>0$, $\beta(\sigma)\geq \beta(0)$ so by definiition of $\beta(0)$, for all $\epsilon>0$, there exists $A\in\mathscr{A}$, such that
\begin{align*}
\sup_{u\in A}\mathfrak{L}(u)<\beta(0)+\epsilon.
\end{align*}
Therefore
\begin{align*}
\sup_{u\in A_0}E_\sigma(u)\leq \beta(0)+\epsilon+\sigma^2\sup_{u\in A_0}\int_{S^1}\kappa^2(u)d\leb^1\leq \beta(0)+\epsilon+C\sigma^2
\end{align*}
so for $C\sigma^2\leq \epsilon$, we have
\begin{align*}
\beta(0)\leq \beta(\sigma)\leq \beta(0)+2\epsilon
\end{align*}
and $\beta$ is increasing, so the claim is proved.

As $\beta$ is monotone, Lebesgue theorem ensures that this real function is differentiable $\leb^1$ almost everywhere. In particular,
\begin{equation}\label{limlog}
\delta=\liminf_{\sigma\rightarrow 0}\left(\sigma\log\frac{1}{\sigma}\beta'(\sigma)\right)=0.
\end{equation}
Let us argue by contradiction. If $\delta>0$, then for all $\sigma>0$ small enough, we have
\begin{align*}
\beta(\sigma)-\beta(0)\geq \int_0^\sigma \beta'(s)ds\geq \frac{1}{2}\delta\int_0^\sigma \frac{ds}{s\log\frac{1}{s}}=\infty,
\end{align*}
which gives the contradiction. 

These observations allow to introduce the following definition.
\begin{defi} Let $\sigma>0$ a fixed positive real number. We say that the function $\beta$ satisfies the entropy condition at a point $\sigma$ if it is differentiable at $\sigma$ and
	\begin{align}\label{entropie}
	\beta'(\sigma)\leq \frac{1}{\sigma\log\frac{1}{\sigma}}.
	\end{align}
\end{defi}

A formal derivation under the min-max would give a sequence of positive $\ens{\sigma_n}_{n\in\N}$ converging to $0$, and a sequence of critical points $\ens{u_{n}}_{n\in\N}$ associated to $\ens{\sigma_n}_{n\in \N}$, such that 
\[
\sigma_n\log\frac{1}{\sigma_n}\frac{dE_{\sigma_n}}{d\sigma}(u_{n})\rightarrow 0\; \text{when}\; n\rightarrow\infty,
\] 
which in turn would imply that
\[
\lim_{n\rightarrow\infty}\sigma_n^2\int_{S^1}\kappa^2(u_n)|\dot{u}_n|d\leb^1=0.
\]

The preceding intuition can be made rigourous thanks of the following proposition.
\begin{prop} There exists a constant $C=C(\beta(0))$, such that for all $0<\sigma\leq C(\beta(0))$ for which $\beta$ satisfies the entropy condition \eqref{entropie},  there exists a critical point $u_\sigma\in\im$ of $E_\sigma$ such that
	\begin{align}
	E_\sigma(u_\sigma)=\beta(\sigma),\quad\text{et}\quad \partial_\sigma E_\sigma(u_\sigma)\leq \beta'(\sigma)+\frac{1}{\sigma\log\frac{1}{\sigma}}.
	\end{align}
\end{prop}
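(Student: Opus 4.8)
The plan is to realize this as a version of Struwe's monotonicity trick, where the differentiability of $\beta$ at $\sigma$ (via the entropy condition) is used to bound the energy of an appropriately chosen "almost-optimal" family from above, and then to combine this with the Palais–Smale condition (Theorem \ref{palaissmale}) to extract a genuine critical point at the level $\beta(\sigma)$.

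First I would fix $\sigma>0$ at which $\beta$ is differentiable and satisfies \eqref{entropie}, and pick a small sequence $\sigma_j \downarrow \sigma$ (and/or $\sigma_j \uparrow \sigma$). For each $j$ choose $A_0^j \in \A_0$ nearly optimal for $\beta(\sigma_j)$, i.e. $\sup E_{\sigma_j}(A_0^j) \le \beta(\sigma_j) + (\sigma_j - \sigma)$. Since for any $u$ one has $E_{\sigma_j}(u) - E_\sigma(u) = (\sigma_j^2 - \sigma^2)\int_{S^1}\kappa^2(u)|\dot u|\,d\leb^1$, i.e. $\sigma^2\int\kappa^2|\dot u| = \bigl(E_{\sigma_j}(u) - E_\sigma(u)\bigr)\sigma^2/(\sigma_j^2-\sigma^2)$, the monotonicity and near-optimality give that on the set of $u \in A_0^j$ with $E_\sigma(u)$ close to $\beta(\sigma)$, the quantity $\sigma^2\int\kappa^2(u)|\dot u|\,d\leb^1$ is controlled by a difference quotient of $\beta$, hence by $\beta'(\sigma) + o(1) \le \tfrac{1}{\sigma\log(1/\sigma)} + o(1)$ as $j \to \infty$. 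This is the standard Struwe selection: the families realizing $\beta(\sigma_j)$ as $\sigma_j \to \sigma$ detect a subset of near-critical level where the second-order term is quantitatively small.

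Next I would run a deformation/pseudo-gradient argument. Suppose, for contradiction, that there is no critical point $u_\sigma$ with $E_\sigma(u_\sigma) = \beta(\sigma)$ and $\partial_\sigma E_\sigma(u_\sigma) \le \beta'(\sigma) + \tfrac{1}{\sigma\log(1/\sigma)}$. Using the Palais–Smale condition at the level $\beta(\sigma) > 0$ (valid since $\sigma > 0$), the set of critical points at that level is compact; combined with the assumed absence of a point with small entropy, one shows $DE_\sigma$ is bounded below in norm on a neighborhood (in $\im$) of the "bad" near-maximal set. One then builds a deformation $\Phi$, isotopic to the identity on $\im$ and compatible with the length constraint defining $\A_0$ (so that $\Phi(A_0) \in \A_0$ by admissibility, condition 2 of Definition \ref{admissible}), that pushes down $E_\sigma$ below $\beta(\sigma)$ on the part of $A_0^j$ not already controlled, while the part that is controlled is handled by the entropy estimate from the previous step. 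This contradicts the definition of $\beta(\sigma)$ as an infimum, yielding the existence of the desired $u_\sigma$. The constant $C(\beta(0))$ enters to guarantee that for $\sigma \le C(\beta(0))$ the level $\beta(\sigma)$ is uniformly bounded away from $0$ and from $\beta(0) + 1$ (using $\beta(\sigma) \to \beta(0)$), so that the deformation stays within a region where all estimates are uniform.

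The main obstacle I expect is the careful bookkeeping in the deformation step: one must simultaneously (i) deform within $\im$ by a map isotopic to the identity so that admissibility is preserved and $\phi(A_0)$ remains an element of $\A_0$ — which is delicate because, as the text itself notes, $\phi(A_0) \ne \phi(A)_0$ in general, so one should work with $A \in \A$ and the length cutoff directly — and (ii) ensure the deformation only acts where $E_\sigma$ is large, leaving untouched the region where the entropy bound $\sigma^2\int\kappa^2(u)|\dot u|\,d\leb^1 \le \tfrac{1}{\sigma\log(1/\sigma)} + o(1)$ already holds, so that in the limit $j \to \infty$ a diagonal/compactness argument produces the single curve $u_\sigma$ with both $E_\sigma(u_\sigma) = \beta(\sigma)$ and the sharp entropy bound. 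Quantifying "large" versus "controlled" with thresholds that close up in the limit, and verifying the pseudo-gradient flow preserves the constraint $\mathfrak{L}(u) \ge \beta(0)/2$ near the critical level, is where the real work lies.
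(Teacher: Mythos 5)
Your high-level plan — Struwe's monotonicity trick to get entropy control from the difference quotient of $\beta$, then a pseudo-gradient deformation within $\A_0$, then the Palais--Smale condition to pass to the limit — is the same skeleton the paper uses, and you correctly flag the two genuine subtleties: (i) preserving membership in $\A_0$ under the flow despite $\phi(A_0)\neq\phi(A)_0$ in general, and (ii) verifying that the flow does not push curves below the length threshold $\mathfrak{L}\geq\beta(0)/2$. The paper handles (i) and (ii) with a single cut-off $\psi_n$ in the pseudo-gradient, which is truncated both in the energy value $E_\sigma$ and in $\mathfrak{L}$, and uses the entropy bound precisely to show that near the max level $\mathfrak{L}(u_n^t)\geq\tfrac34\beta(0)$, so the length cut-off is inactive there (this is where the constant $C(\beta(0))=e^{-4/\beta(0)}$ comes from).

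However, your plan for Step 2 is internally inconsistent in a way that would cost you real effort if you tried to carry it out. In Step 1 you correctly observe that for $u$ in a near-optimal family $A_0^j$ with $E_\sigma(u)$ close to $\beta(\sigma)$ the quantity $\sigma^2\int\kappa^2(u)|\dot u|$ is controlled by the difference quotient of $\beta$. But this control holds for \emph{every} $u$ satisfying the interval constraint \eqref{intervallePS} — it is an automatic consequence of the mean value theorem applied to $s\mapsto E_s(u)$ on $[\sigma,\sigma_n]$, it is not a property of a subset of the near-maximal curves. There is therefore no ``controlled'' versus ``uncontrolled'' region to split the deformation over, and planning to ``leave untouched the region where the entropy bound already holds'' while deforming elsewhere is a phantom problem: after selecting the near-optimal family, \emph{every} near-maximal curve has small entropy, and the pseudo-gradient flow should act uniformly on all of them. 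Organizing the deformation around a threshold for $\sigma^2\int\kappa^2$ rather than purely around $E_\sigma$ and $\mathfrak{L}$ would add genuine bookkeeping with no benefit, and could easily lead you to lose the monotone decay estimate $\frac{d}{dt}B_{A_n}(t)\leq -\delta$ that drives the contradiction.

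A second, more technical point you gloss over: the contradiction hypothesis should be stated as a uniform lower bound $\Vert DE_{\sigma_n}(u)\Vert\geq\delta$ for $u$ in the interval \eqref{intervallePS}, and the pseudo-gradient should be built for $E_{\sigma_n}$, not for $E_\sigma$; then one must show separately (the paper's estimate \eqref{star}) that $DE_{\sigma_n}-DE_\sigma$ is uniformly $O(\sigma_n^2-\sigma^2)$ on the interval, so the $E_{\sigma_n}$-flow also strictly decreases $E_\sigma$ once $n$ is large. Your formulation — assume no critical point of $E_\sigma$ with both the level and the entropy bound, invoke compactness of the critical set — can be made to work via Palais--Smale, but it obscures this two-functional issue, which is exactly where the $o(1)$'s in your sketch need to be made uniform. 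Adopting the paper's version of the contradiction hypothesis directly avoids the detour through the structure of the critical set entirely.
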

\begin{proof}
	\textbf{Step 1} : estimation of the derivative of $E_\sigma$.\newline
	Let $\epsilon>0$ a positive fixed constant. We consider a sequence $\ens{\sigma_n}_{n\in\N}$ strictly decreasing to $\sigma>0$. 
	Let $A_0\in\mathscr{A}_0$ and $u\in A_0$, such that 
	\[
	E_\sigma(u)\geq \beta(\sigma)-\epsilon(\sigma_n-\sigma)
	\]
	and
	\[
	E_{\sigma_n}(u)\leq \beta(\sigma_n)+\epsilon(\sigma_n-\sigma).
	\]
	Such a pair $(u,A_0)$ always exists, for $n$ large enough. As $\beta$ is differentiable at $\sigma$, we have
	\[
	\beta(\sigma_n)\leq \beta(\sigma)+(\beta'(\sigma)+\epsilon)(\sigma_n-\sigma)
	\]
	for $n$ large enough, from which we deduce that
	\begin{equation}\label{intervallePS}
	\beta(\sigma)-\epsilon(\sigma_n-\sigma)\leq E_\sigma(u)\leq E_{\sigma_n}(u)\leq \beta(\sigma)+(\beta'(\sigma)+2\epsilon)(\sigma_n-\sigma)
	\end{equation}
	If $u$ satisfies \eqref{intervallePS}, then
	\begin{align*}
	\frac{E_{\sigma_n}(u)-E_\sigma(u)}{\sigma_n-\sigma}\leq \beta'(\sigma)+3\epsilon
	\end{align*}
	so according to the mean value theorem, there exists $\sigma'\in[\sigma,\sigma_n]$, such that
	\[
	\partial_\sigma E_{\sigma'}(u)\leq \beta'(\sigma)+3\epsilon.
	\]
	But
	\begin{align*}
	\partial_\sigma E_{\sigma'}(u)&=\int_{S^1}2\sigma'\kappa^2(u)|\dot{u}|d\leb^1=\frac{\sigma'}{\sigma}\partial_\sigma E_\sigma(u)
	\end{align*}
	so for all $u\in\im$ satisfying the inequalities \eqref{intervallePS}, 
	\begin{equation}\label{entropiegratuite}
	\partial_\sigma E_\sigma(u)\leq \beta'(\sigma)+3\epsilon.
	\end{equation}

	\textbf{Step 2}: existence of almost Palais-Smale sequences.
	
	We want to show that there exists a sequence $\ens{u_n}_{n\in\N}$ satisfying \eqref{intervallePS} and such that
	\begin{equation}\label{suitepalaissmale}
	\Vert DE_{\sigma_n}(u_n)\Vert\underset{n\rightarrow\infty}{\longrightarrow}0
	\end{equation}
	We shall be careful to distonguish this condition from the Palais-Smale condition for $E_\sigma$, but we will show in the next step that it implies Palais-Smale condition for $E_\sigma$. 
	
	We argue by contradiction, supposing the existence of a positive constant $\delta>0$ such that for all immersion $u\in\im$ satisfying \eqref{intervallePS}, we have for $n$ large enough
	\[
	\Vert DE_{\sigma_n}(u)\Vert\geq \delta.
	\]
	Let $X^0_n$ a pseudo-gradient vector field (see \cite{struwevar}.) for $E_{\sigma_n}$, \textit{i.e.} a locally Lipschitz bounded function $X_n^0:\im\rightarrow T\im$, such that for all $w\in \im$ such that $DE_{\sigma_n}(w)\neq 0$,
	\begin{align*}
	\Vert X_n^0(w)\Vert&<2 \min\ens{\Vert DE_{\sigma_n}(w)\Vert,1}\\
	DE_{\sigma_n}(w)\cdot X_n^0(w)&>\min\ens{\Vert DE_{\sigma_n}(w)\Vert,1}\Vert DE_\sigma(w)\Vert.
	\end{align*}
	
	Let $\psi\in \mathscr{D}(\R)$ a positive non-decreasing cut-off function such that $0\leq \psi\leq 1$, $\supp\psi\subset \R_+$, and $\psi=1$ on $[1,\infty[$. We define \footnote{Note that we use a different cut-off function from the original paper \cite{struweart}.} for all $n\in\N$,
	\[
	\psi_n(u)=\psi\left(\frac{E_{\sigma}(u)-(\beta(\sigma)-\epsilon(\sigma_n-\sigma))}{\epsilon(\sigma_n-\sigma)}\right)\psi\bigg(\frac{4}{\beta(0)}\frac{e^{-\frac{4}{\beta(0)}}}{\sigma}\left(\mathfrak{L}(u)-\frac{\beta(0)}{2}\right)\bigg).
	\]
	Let $\phi_n$ the global flow associated to $-X_n=-\psi_n X_n^0$, defined by
	\begin{equation}\label{eqdiffrem}
	\left\{
	\begin{aligned}
	\frac{d}{dt}\phi_n^t(u)&=-X_n(\phi_n^t(u))\\
	\phi_n^0(u)&=u
	\end{aligned}\right.
	\end{equation}
	Note that $\phi_n$ is $C^{1}$ from respect of the first variable, and that for all  $t\in\R_+$, $\phi_n^t:\im\rightarrow\im$ is a locally Lipschitz homeomorphism.
	We remark that $\A_0$ is invariant under the action of $\phi_n$, and that for all $A_0\in\mathscr{A}_0$, for all $t\geq 0$, $\phi_n^t(A_0)=\phi_n^t(A_0)_0$. We have 
	\begin{align*}
	\frac{d}{dt}E(\phi_n^t(u))=-\psi_n(u)DE_{\sigma_n}(u)\cdot X_n(u)\leq -\delta\psi_n(u)\leq 0
	\end{align*}
	We would like to show that $t\mapsto E_\sigma(\phi_n^t(u))$ is also decreasing. Consider, $u\in \im$, $v\in \mathrm{W}^{2,2}_u(S^1,TM)$, 
	\begin{align*}
	DE_{\sigma_n}(u)\cdot v&=\int_{0}^L\left(2\sigma_n^2 \scal{D_t\dot{u}}{D_t^2v}-(3\sigma^2\kappa(u)^2-1)\scal{\dot{u}}{D_tv}+2\sigma_n^2 \langle R(D_t\dot{u},\dot{u})\dot{u},v\rangle\right)d\leb^1\\
	&=DE_\sigma(u)\cdot v+(\sigma_n^2-\sigma^2)\int_{0}^L2\scal{D_t\dot{u}}{D_t^2v}-3\kappa(u)^2\scal{\dot{u}}{D_tv}+2\langle R(D_t\dot{u},\dot{u})\dot{u},v\rangle d\leb^1
	\end{align*}
	where $L=\mathfrak{L}(u)$ (recall that the arclength parametrization where $|\dot{u}|=1$ is possible because our Lagrangian is invariant under diffeomorphism). As a consequence, we have
	\begin{align*}
	\left|DE_{\sigma_n}(u)\cdot v-DE_\sigma(u)\cdot v\right|&\leq (\sigma_n^2-\sigma^2)\bigg\{2\np[{[0,L]}]{D_t\dot{u}}{2}\np[{[0,L]}]{D_t^2v}{2}+3\np[{[0,L]}]{D_tv}{\infty}\np[{[0,L]}]{D_t\dot{u}}{2}^2\\
	&\quad\quad\quad\quad\quad\quad\quad\quad\quad\quad\quad\quad\quad\quad\quad+2\np[M]{R}{\infty}\np[{[0,L]}]{D_t\dot{u}}{2}\np[{[0,L]}]{v}{\infty}\bigg\}\\
	&\leq C(u)(\sigma_n^2-\sigma^2)\Vert{v}\Vert_{\mathrm{W}_u^{2,2}(S^1,TM)}.
	\end{align*}
	Furthermore, the proof of theorem \ref{palaissmale} (where we prove Palais-Smale condition), shows the existence of a continuous function $f_{M}:\R_+^*\times\R_+\rightarrow\R_+$ increasing in each parameter, depending only on $(M^m,g)$ such that
	\begin{align*}
	C(u)\leq f_{M}(\sigma_n,E_{\sigma_n}(u))
	\end{align*}
	so for all $u$ satisfying \eqref{intervallePS},
	\begin{align}\label{borneunif}
	C(u)\leq f_{M}(\sigma_n,\beta(\sigma)+(\beta'(\sigma)+\epsilon)(\sigma_n-\sigma))
	\end{align}
	therefore $C(u)$ is uniformly bounded by a positive constant independent of $u$.
	We deduce that
	\begin{equation}\label{star}
	\sup\ens{\left|\left(DE_{\sigma_n}(u)-DE_{\sigma}(u)\right)\cdot v\right|, \Vert{v}\Vert_{\mathrm{W}_u^{2,2}(S^1,TM)}\leq 1}\underset{n\rightarrow\infty}{\longrightarrow}0.
	\end{equation}
	Now we can estimate the derivative of $t\mapsto E_\sigma(\phi_n^t(u))$ as follows :
	\begin{align}
	\frac{d}{dt}E_{\sigma}(\phi_n^t(u))&=DE_\sigma(\phi_n^t(u))\cdot X_n(\phi_n^t(u))\nonumber\\
	&\leq -\psi_n(\phi_n^t(u))DE_{\sigma_n}(\phi_n^t(u))\cdot X_n^0(\phi_n^t(u))+C(\phi_n^t(u))(\sigma_n^2-\sigma^2)\Vert{X_n^0(\phi_n^t(u))}\Vert_{\mathrm{W}^{2,2}_{\phi_n^t(u)}(S^1,TM)}\nonumber\\
	&\leq -2\psi_n(\phi_n^t(u))\delta+2C(\phi_n^t(u))(\sigma_n^2-\sigma^2)\label{décroissance}
	\end{align}
	
	For all $n\in\N$, let us a fix an element $A_n\in\mathscr{A}_0$ such that
	\begin{align*}
	\sup_{u\in A_n}E_{\sigma_n}(u)\leq \beta(\sigma_n)+\epsilon(\sigma_n-\sigma).
	\end{align*}
	For all $u\in A_n$, the map $t\mapsto E_{\sigma_n}(\phi_n^t(u))$ is decreasing, so for all $t\geq 0$, according to \eqref{borneunif},
	\begin{align*}
	E_{\sigma_n}(\phi_n^t(u))&\leq E_{\sigma_n}(u)\leq \beta(\sigma_n)+\epsilon(\sigma_n-\sigma)\\ C(\phi_n^t(u))&\leq f(\sigma_n,E_{\sigma_n}(\phi_n^t(u))\leq f(\sigma_0,\beta(\sigma)+\left(\beta'(\sigma)+2\epsilon\right)(\sigma_0-\sigma))
	\end{align*}
	By invariance of $\mathscr{A}_0$ under the action of the semi-flow $\ens{\phi_n^t}_{t\geq 0}$, for all $t\geq 0$, we define
	\[
	B_{A_n}(t)=\sup_{u\in A_n}E_{\sigma}(\phi_n^t(u))\geq \beta(\sigma)
	\]
	and $B_{A_n}(t)$ is attained only at points $u_n^t=\phi_n^t(u)$ satisfying \eqref{intervallePS}, and for such a $u_n^t$, we have
	\begin{align*}
	\beta(\sigma)-\epsilon(\sigma_n-\sigma)\leq E_\sigma(u_n^t)\leq E_{\sigma_n}(u_n^t)\leq \beta(\sigma)+(\beta'(\sigma)+2\epsilon)(\sigma_n-\sigma).
	\end{align*}
	Furthermore,
	\begin{align*}
	\partial_\sigma E_\sigma(u_n^t)\leq \beta'(\sigma)+3\epsilon,
	\end{align*}
	so if we choose $\epsilon={\left(8\sigma\log\dfrac{1}{\sigma}\right)}^{-1}$, as $\beta$ satisfies the entropy condition \eqref{entropie} at $\sigma$, we have	\begin{align*}
	\int_{S^1}\sigma^2\kappa^2(u_n^t)|\dot{u_n^t}|d\leb^1\leq \frac{7}{8}\frac{1}{\log\frac{1}{\sigma}},
	\end{align*}
	so for all $\sigma_n-\sigma\leq \dfrac{1}{8}$
	\begin{align*}
	\mathfrak{L}(u_n^t)&\geq \beta(\sigma)-\epsilon(\sigma_n-\sigma)-\int_{S^1}\sigma^2\kappa^2(u_n^t)|\dot{u_n^t}|d\leb^1\\
	&\geq \beta(0)-\frac{1}{\log\frac{1}{\sigma}}\\
	&\geq \frac{3}{4}\beta(0)
	\end{align*}
	for $\sigma\leq C(\beta(0))=e^{-\frac{4}{\beta(0)}}$. Therefore, for all $\sigma\leq C(\beta(0))$, and $n$ large enough such that $8(\sigma_n-\sigma)\leq 1$, we have $\psi_n(u_n^t)=1$, so thank of \eqref{décroissance}, if $n$ is large enough,
	\begin{equation}\label{dérivée}
	\frac{d}{dt}B_{A_n}(t)\leq -\delta
	\end{equation}
	thus
	\[
	B_{A_n}(t)\leq \beta(\sigma_n)+\epsilon(\sigma_n-\sigma)-\delta t
	\]
	so for $t$ large enough, $B_{A_n}(t)< \beta(\sigma)$, contradicting the definition of $\beta(\sigma)$.

	\textbf{Step 3}: convergence and conclusion.
	
	Thanks of step $2$, we can choose a sequence $\ens{u_n}_{n\in\N}\in\im$ satisfying \eqref{intervallePS}, and such that
	\[
	\lim_{n\rightarrow\infty}\Vert DE_{\sigma_n}(u_n)\Vert=0
	\]
	So \eqref{star} gives
	\[
	\sup_{n\in\N}E_{\sigma}(u_n)<\infty,\quad\text{and}\quad \Vert DE_{\sigma}(u_n)\Vert\underset{n\rightarrow\infty}{\longrightarrow} 0
	\]
	As a consequence, $\ens{u_n}_{n\in\N}$ is a Palais-Smale sequence for $E_\sigma$, \vspace{0.2em} we can suppose thanks of theorem \ref*{palaissmale} that there exists $u\in\im$ such that
	\[
	u_n\underset{n\rightarrow\infty}{\longrightarrow} u\quad\text{strongly in}\; \im
	\]
	In particular, thanks of \eqref{intervallePS}, we have
	\[
	\beta(\sigma)=\lim_{n\rightarrow\infty}E_{\sigma_n}(u_n)=E_{\sigma}(u)
	\]
	and
	\[
	\partial_\sigma E_\sigma(u)\leq\liminf_{n\rightarrow\infty}\partial_\sigma E_{\sigma_n}(u)\leq \beta'(\sigma)+3\epsilon,
	\]
	which concludes the proof of the proposition.
\end{proof}

We now come to the main result of this section.

\begin{theorem}\label{bonnextraction}
	There exists a sequence $\ens{\sigma_n}_{n\in\N}$ of positive numbers converging to $0$, and a sequence of critical points $\ens{u_n}_{n\in\N}$ of $\ens{E_{\sigma_n}}_{n\in\N}$ such that
	\begin{align*}
	\beta(0)\leq E_{\sigma_n}(u_n)=\beta(\sigma_n),\quad\text{et}\quad \lim_{n\rightarrow\infty}\sigma_n^2\int_{S^1}\kappa^2(u_n)|\dot{u}_n|d\leb^1=0.
	\end{align*}
\end{theorem}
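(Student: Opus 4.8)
The plan is to deduce the theorem directly from the preceding proposition, the only extra ingredient being a careful selection of the scales $\sigma_n$ using the monotonicity of $\beta$. First I would exploit \eqref{limlog}: since $\beta$ is monotone, it is differentiable at $\mathscr{L}^1$-almost every point, so the points of differentiability accumulate at $0$, and the identity
\[
\liminf_{\sigma\rightarrow 0}\left(\sigma\log\frac{1}{\sigma}\,\beta'(\sigma)\right)=0
\]
allows me to extract a strictly decreasing sequence $\ens{\sigma_n}_{n\in\N}$ of positive numbers converging to $0$, at each of which $\beta$ is differentiable, such that
\[
\sigma_n\log\frac{1}{\sigma_n}\,\beta'(\sigma_n)\underset{n\rightarrow\infty}{\longrightarrow}0.
\]

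For $n$ large enough one then has simultaneously $\sigma_n\leq C(\beta(0))$ (because $\sigma_n\rightarrow 0$) and $\sigma_n\log\frac{1}{\sigma_n}\,\beta'(\sigma_n)\leq 1$, that is, $\beta'(\sigma_n)\leq\frac{1}{\sigma_n\log\frac{1}{\sigma_n}}$, so $\beta$ satisfies the entropy condition \eqref{entropie} at $\sigma_n$. Discarding finitely many indices I may assume this holds for every $n$. The preceding proposition then provides, for each $n$, a critical point $u_n\in\im$ of $E_{\sigma_n}$ with
\[
E_{\sigma_n}(u_n)=\beta(\sigma_n),\qquad \partial_\sigma E_{\sigma_n}(u_n)\leq \beta'(\sigma_n)+\frac{1}{\sigma_n\log\frac{1}{\sigma_n}}.
\]
Since $\beta(\sigma)\geq\beta(0)$ for every $\sigma>0$, this already gives $\beta(0)\leq E_{\sigma_n}(u_n)=\beta(\sigma_n)$.

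It remains to control the curvature integral. Using $\partial_\sigma E_\sigma(u)=\int_{S^1}2\sigma\,\kappa^2(u)|\dot{u}|\,d\leb^1$, one computes
\[
\sigma_n^2\int_{S^1}\kappa^2(u_n)|\dot{u}_n|\,d\leb^1=\frac{\sigma_n}{2}\,\partial_\sigma E_{\sigma_n}(u_n)\leq \frac{\sigma_n}{2}\,\beta'(\sigma_n)+\frac{1}{2\log\frac{1}{\sigma_n}}.
\]
The second term tends to $0$ since $\sigma_n\rightarrow 0$; for the first, writing $\sigma_n\beta'(\sigma_n)=\big(\sigma_n\log\frac{1}{\sigma_n}\,\beta'(\sigma_n)\big)\big/\log\frac{1}{\sigma_n}$ exhibits a numerator tending to $0$ by the choice of $\ens{\sigma_n}_{n\in\N}$ and a denominator tending to $+\infty$, so $\sigma_n\beta'(\sigma_n)\rightarrow 0$. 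Hence the right-hand side tends to $0$, which is exactly the desired conclusion.

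The genuinely delicate point — really the only one — is the first step: one must be sure that the infimum limit $0$ in \eqref{limlog} is attained along a sequence of points at which $\beta$ is actually differentiable. This is legitimate because a monotone function fails to be differentiable only on an $\mathscr{L}^1$-negligible set, whose complement therefore has full measure in every interval $(0,\epsilon)$ and in particular accumulates at $0$; restricting the $\liminf$ in \eqref{limlog} to this full-measure set leaves its value unchanged. Everything else is a mechanical rearrangement of the estimate furnished by the previous proposition.
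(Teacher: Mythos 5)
Your proof is correct and follows essentially the same route as the paper's: extract, using \eqref{limlog} and the almost-everywhere differentiability of the monotone function $\beta$, a sequence of scales at which the entropy condition \eqref{entropie} holds, then apply the preceding proposition and translate the bound on $\partial_\sigma E_{\sigma_n}(u_n)$ into a bound on $\sigma_n^2\int_{S^1}\kappa^2(u_n)|\dot u_n|\,d\leb^1$. The paper leaves this last arithmetic implicit (and in fact uses the slightly cruder estimate $\beta'(\sigma_n)\le\frac{1}{\sigma_n\log(1/\sigma_n)}$ directly to obtain the quantitative bound $\frac{1}{\log(1/\sigma_n)}$ stated in Theorem~\ref{mainresult}), while you spell it out and even prove the marginally stronger statement $\sigma_n\beta'(\sigma_n)\to0$; both yield the claimed conclusion.
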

\begin{proof}
	Choosing a sequence $\ens{\sigma_n}_{n\in\N}$ converging to $0$ such that for all $n\in\N$, the function $\beta$ satisfies the entropy condition \eqref{entropie} at $\sigma_n$ (which is possible as $\beta$ is differentiable $\leb^1$ almost everywhere and satisfies \eqref{limlog}), the theorem is now an easy consequence of the preceeding proposition.
\end{proof}

\section{Limiting Procedure}\label{Passage}

\begin{theorem}\label{géodésique}
Let $(M^m,g)$ a Riemannian compact manifold of class $C^\nu$ ($\nu\geq 3$), such that there exist an admissible subset $\A\subset \mathscr{P}^*(\mathrm{W}^{2,2}(S^1,M))$, and define $\A_0$ as. For all $\sigma\geq 0$, we define 
\begin{equation}
\beta(\sigma)=\inf_{A_0\in \mathscr{A}_0}\sup_{u\in A}E_\sigma(u).
\end{equation}
If for $\sigma$ small enough, $\beta(\sigma)<\infty$ and $\beta(0)>0$,
 there exists a sequence $\ens{\sigma_n}_{n\in\N}$ of positive numbers converging to $0$, verifying
 \begin{align*}
 E_{\sigma_n}(u_n)=\beta(\sigma_n), \quad\sigma_n^2\int_{S^1}\kappa^2(u_n)d\leb^1\leq \frac{1}{\log\frac{1}{\sigma_n}}
 \end{align*}
 and a closed non-trivial geodesic $u:S^1\rightarrow M$ such that
 $\ens{u_n}_{n\in\N}$ converges to $u$ strongly in $\mathrm{L}^{\infty}(S^1,M)$ and $\ens{\dot{u}_n}_{n\in\N}$ converge to $\dot{u}$ almost everywhere.
\end{theorem}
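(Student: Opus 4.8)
The plan is to take the sequence $\ens{\sigma_n}_{n\in\N}$ and critical points $\ens{u_n}_{n\in\N}$ furnished by Theorem \ref{bonnextraction}, so that $E_{\sigma_n}(u_n)=\beta(\sigma_n)$ and $\sigma_n^2\int_{S^1}\kappa^2(u_n)|\dot u_n|d\leb^1\to 0$, and to parametrise each $u_n$ by a fixed multiple of arclength, so that $|\dot u_n|\equiv L_n/(2\pi)$ with $L_n=\mathfrak{L}(u_n)$. Since $\beta(0)\leq L_n\leq E_{\sigma_n}(u_n)=\beta(\sigma_n)\to\beta(0)$, the lengths $L_n$ are bounded above and below, so after extraction $L_n\to L\in[\beta(0),\beta(0)]$, i.e. $L=\beta(0)>0$. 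The Poincaré–Wirtinger bound used in the proof of Theorem \ref{palaissmale} gives a uniform bound on $\int_{S^1}|D_t\dot u_n|^2 d\leb^1 = \int_{S^1}\kappa^2(u_n)|\dot u_n|\,d\leb^1$; combined with $\sigma_n^2\int\kappa^2(u_n)|\dot u_n|\to 0$, this is exactly the information that is \emph{not} uniform, so one cannot simply pass to the limit in $W^{2,2}$. Instead $\ens{u_n}_{n\in\N}$ is bounded in $W^{1,2}$ and equicontinuous (again by Cauchy–Schwarz as in Theorem \ref{palaissmale}), so Arzelà–Ascoli yields a subsequence with $u_n\to u$ in $L^\infty$ and $u_n\rightharpoonup u$ weakly in $W^{1,2}$, with $|\dot u|\leq L/(2\pi)$ a.e. and $\mathfrak{L}(u)\leq\liminf L_n=\beta(0)$.

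The heart of the matter is upgrading the weak $W^{1,2}$ convergence to the a.e.\ convergence $\dot u_n\to\dot u$ claimed in the statement, and then identifying $u$ as a genuine geodesic. Here one exploits the quasi-conservation law alluded to in the introduction: from the Euler–Lagrange equation \eqref{presque géodésique},
\begin{equation}\label{quasinoeth}
D_t\dot u_n=\sigma_n^2\left\{D_t\left(2D_t^2\dot u_n+3\kappa^2(u_n)\dot u_n\right)+2R(D_t\dot u_n,\dot u_n)\dot u_n\right\},
\end{equation}
one extracts a divergence-form identity whose right-hand side is $\sigma_n^2$ times terms controlled (after integration) by $\int\kappa^2(u_n)|\dot u_n|\,d\leb^1$ and hence $o(1)$. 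Testing \eqref{quasinoeth} against suitable vector fields — in particular against $\dot u_n$ itself, or against Killing-type fields along the curve — produces a conserved (up to $o(1)$) quantity, the ``almost Noether'' current. The strategy is then a compensated-compactness argument: the product of the weakly converging $\dot u_n$ with the quantity controlled by the conservation law passes to the limit, forcing $|\dot u_n|\to |\dot u|$ a.e.\ (so in particular $\mathfrak{L}(u)=\beta(0)$ and no length is lost), and then $\dot u_n\to\dot u$ a.e. The main obstacle — and the genuinely new point of the paper, as the counter-examples in Proposition \ref{counterintro} show — is precisely that there is no $\varepsilon$-regularity to fall back on, so this passage to the limit must be driven entirely by the structure of \eqref{quasinoeth} rather than by interior estimates.

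Once the strong convergence $\dot u_n\to\dot u$ a.e.\ (together with the uniform $L^2$ bound on $D_t\dot u_n$, giving $D_t\dot u_n\rightharpoonup D_t\dot u$ weakly in $L^2$) is in hand, passing to the limit in \eqref{quasinoeth} is straightforward: the left-hand side converges in the distributional sense to $D_t\dot u$, while the right-hand side is $\sigma_n^2$ times a sequence bounded in $W^{-1,1}$ (using $\sigma_n^2\int\kappa^2(u_n)|\dot u_n|\to 0$ to kill the top-order term and the curvature term), hence converges to $0$. Therefore $D_t\dot u=0$ in the distributional sense, so $u$ is a geodesic; bootstrapping the geodesic equation gives $u\in C^\nu$, and since $|\dot u|$ is then constant and $\mathfrak{L}(u)=\beta(0)>0$, the curve $u$ is non-trivial of length $\beta(0)$. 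I would carry this out in the order: (i) fix the arclength parametrisation and record the uniform $W^{1,2}$ and equicontinuity bounds; (ii) extract $L^\infty$ and weak $W^{1,2}$ limits; (iii) derive and integrate the quasi-conservation law from \eqref{presque géodésique}; (iv) run the compensated-compactness argument to get $\dot u_n\to\dot u$ a.e.\ and $\mathfrak{L}(u)=\beta(0)$; (v) pass to the limit in the Euler–Lagrange equation and conclude that $u$ is a non-trivial $C^\nu$ closed geodesic of length $\beta(0)$. Step (iv) is where essentially all the difficulty lies.
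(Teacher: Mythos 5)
Your proposal identifies the correct high‐level architecture — arclength parametrisation, $L_n\to\beta(0)$, extraction of an $L^\infty$/weak‐$W^{1,2}$ limit, then a quasi‐conservation‐law argument in the spirit of compensated compactness to upgrade to a.e.\ convergence, and finally passage to the limit in the Euler–Lagrange equation. This is indeed how the paper proceeds, but the heart of the proof (your step (iv), which you yourself flag as containing ``essentially all the difficulty'') is left entirely as a black box, and your sketch of it does not match what the paper actually does.

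Concretely, the paper does \emph{not} ``test \eqref{presque géodésique} against $\dot u_n$ or Killing‐type fields''; that description is vague and, as written, would not produce the needed pointwise information. What the paper does is define the explicit quasi‐conserved vector field
\[
v_n=\dot u_n-\sigma_n^2\left(2D_t^2\dot u_n+3\kappa^2(u_n)\dot u_n\right),
\]
observe from the Euler–Lagrange equation that $D_tv_n=2\sigma_n^2R(D_t\dot u_n,\dot u_n)\dot u_n$ so that $\|D_tv_n\|_{L^2}=O\bigl(\sigma_n(\sigma_n^2\!\int\kappa^2(u_n)|\dot u_n|)^{1/2}\bigr)\to 0$, hence $v_n$ converges in $L^\infty$ to a constant vector $\bar v$ with $|\bar v|\le 1$, and — crucially — compute the \emph{pointwise} algebraic identity
\[
\scal{\dot u_n}{v_n}=1-\sigma_n^2\kappa^2(u_n),
\]
valid in arclength parametrisation because $\scal{D_t\dot u_n}{\dot u_n}=0$ and $\scal{D_t^2\dot u_n}{\dot u_n}=-\kappa^2(u_n)$. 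Averaging this over small intervals and passing to the (weak) limit gives $\scal{\dot u}{\bar v}=1$ a.e.; since $|\dot u|\le 1$ a.e.\ and $|\bar v|\le 1$, the Cauchy–Schwarz rigidity forces $|\dot u|=1$ and $|\bar v|=1$, which is what yields the a.e.\ convergence of $\dot u_n$ and the non‐degeneracy $L=\beta(0)$. None of this appears in your proposal. Your step (v) is likewise underspecified: the paper does not dispose of the right‐hand side of the Euler–Lagrange equation by a generic ``bounded in $W^{-1,1}$'' argument, but tests against the projected vector fields $v_n=P(u_n)v$ for fixed $v\in\mathrm{W}^{2,2}_u(S^1,TM)$, using a dedicated technical lemma (Lemma~\ref{lemmealacon}) controlling $v_n$, $D_tv_n$, and $\sigma_n^2\!\int|D_t^2v_n|^2$; the estimate $\sigma_n^2\!\int|D_t^2v_n|^2\to 0$, which makes the top‐order term vanish, requires a computation that is not contained in a $W^{-1,1}$ bound. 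Finally, a minor slip: the inequality $\beta(0)\le L_n$ in your opening paragraph does not hold as stated — one only has $\beta(0)\le E_{\sigma_n}(u_n)=L_n+\sigma_n^2\!\int\kappa^2(u_n)|\dot u_n|$, so $L_n\ge\beta(0)-o(1)$, which still gives $L_n\to\beta(0)$ but not via the chain you wrote.
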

\begin{proof}
\textbf{Step 1}: quasi-conservation law and length convergence.

Let $\ens{u_n}_{n\in\N}$ a sequence given by the theorem \ref{bonnextraction}, in arc-length parametrization. We define, for all $n\in\N$, $L_n=\mathfrak{L}(u_n)$. Let $\ens{v_n}_{n\in\N}$ defined by
\[
v_n=\dot{u}_n-\sigma_n^2(2D_t^2\dot{u}_n+3\kappa^2(u_n)\dot{u}_n).
\]
\textit{A priori}, $v_n$ belongs to the dual of $\mathrm{W}^{2,2}_{u_n}(S^1,TM)$. However, thanks of \eqref{presque géodésique}, we have
\[
D_tv_n=2\sigma_n^2R(D_t\dot{u}_n,\dot{u}_n)\dot{u}_n\in \lp{2}{S^1}.
\]
Thus $v_n\in \mathrm{W}_{u_n}^{1,2}(S^1,TM)$, and
\begin{align}\label{scal}
\scal{\dot{u}_n}{v_n}&=1-\sigma_n^2\left(2\scal{D_t^2\dot{u}_n}{\dot{u}_n}+3\kappa^2(u_n)\right)\\
&=1-\sigma_n^2\kappa^2(u_n)
\end{align}
as $|\dot{u}_n|=1$, so $0=2\scal{D_t\dot{u}_n}{\dot{u}_n}$, and we have
\[
0=\scal{D_t^2\dot{u}_n}{\dot{u}_n}+\scal{D_t\dot{u}_n}{D_t\dot{u}_n}=\scal{D_t^2\dot{u}_n}{\dot{u}_n}+\kappa^2(u_n).
\]
Remark that $v_n\in \mathrm{W}^{1,2}([0,L_n])$ implies that $u_n$  is in $C^{\nu-1}([0,L_n],M)$. Indeed, 
\[
2\sigma_n^2 D_t^2\dot{u}_n=\dot{u}_n-v_n-3\sigma_n^2\kappa^2(u_n)\dot{u}_n\in \mathrm{L}^1([0,L_n])
\]
so $D_t^2\dot{u}_n\in L^1([0,L_n])$, and $u\in \mathrm{W}^{3,1}([0,L_n])$. An immediate bootstrap argument implies that $u_n\in \mathrm{W}^{\nu,1}(S^1,M)\subset C^{\nu-1}(S^1,M)$ and $v_n\in \mathrm{W}^{\nu-1,1}(S^1,M)\subset C^{\nu-2}(S^1,M)$.
Furthermore, $D_tv_n\underset{n\rightarrow\infty}{\longrightarrow}0$ in $\lp{2}{S^1}$. Indeed (recall $L_n=\mathfrak{L}(u_n)$),
\begin{align*}
\int_{0}^{L_n}\sigma_n^2 |R(D_t\dot{u}_n,\dot{u}_n)\dot{u}_n|^2d\leb^1
\leq\np[M]{R}{\infty}\int_{0}^{L_n}\sigma_n^2|D_t\dot{u}_n|^2d\leb^1=\np[M]{R}{\infty}\int_{S^1}\kappa^2(u_n)|\dot{u}_n|d\leb^1,
\end{align*}
We deduce that
\[
\np{D_tv_n}{2}\leq 2\np[M]{R}{\infty}\sigma_n{\left(\sigma_n^2\int_{S^1}\kappa^2(u_n)|\dot{u}_n|d\leb^1\right)}^{\frac{1}{2}}\underset{n\rightarrow\infty}{\longrightarrow}0
\]
In particular, there exist $\bar{v}\in M$ such that
\[
\np{v_n-\bar{v}}{\infty}\underset{n\rightarrow\infty}{\longrightarrow} 0.
\]
If we set $\displaystyle\bar{v_n}=\frac{1}{2\pi}\int_{S^1}v_nd\leb^1$, this is equivalent to
\[
\np{v_n-\bar{v_n}}{\infty}\underset{n\rightarrow\infty}{\longrightarrow} 0,
\]
which in turn implies that
\begin{align*}
\int_{0}^{L_n}v_n\cdot \bar{v_n}d\leb^1=\int_{0}^{L_n}|\bar{v_n}|^2d\leb^1+\int_{0}^{L_n}(v_n-\bar{v_n})\cdot \bar{v_n}d\leb^1\\
=L_n|\bar{v_n}|^2+\int_{0}^{L_n}(v_n-\bar{v_n})\cdot \bar{v_n}d\leb^1
\end{align*}
and
\[
\epsilon_n=\int_{0}^{L_n}(v_n-\bar{v_n})\cdot \bar{v_n}d\leb^1\leq L_n|\bar{v_n}|\np{v_n-\bar{v_n}}{\infty}\underset{n\rightarrow\infty}{\longrightarrow}0.
\]
On the other hand,
\[
\int_{0}^{L_n}v_n\cdot \bar{v_n}d\leb^1=\int_{0}^{L_n}\dot{u}_n\cdot \bar{v_n}d\leb^1-3\sigma_n^2\int_{0}^{L_n}\kappa^2(u_n)\dot{u_n}\cdot \bar{v_n}d\leb^1\\
\leq L_n|v_n|+3\sigma_n^2\int_{0}^{L_n}\kappa^2(u_n)d\leb^1|v_n|
\]
so
\[
L_n|\bar{v_n}|^2+\epsilon_n\leq L_n|\bar{v_n}|+3\sigma_n^2\int_{S^1}\kappa^2(u_n)|\dot{u}_n|d\leb^1.
\]
Now, $L_n\geq \beta(0)$ pour tout $n\in\N$, so
\[
|\bar{v_n}|\leq 1+\frac{3}{\beta(0)}\sigma_n^2\int_{S^1}\kappa^2(u_n)|\dot{u}_n|d\leb^1\underset{n\rightarrow\infty}{\longrightarrow} 1.
\]
And we get
\begin{equation}\label{normev}
|\bar{v}|=\lim_{n\rightarrow\infty}\frac{1}{L_n}\int_{0}^{L_n}|v_n|d\leb^1\leq 1.
\end{equation}

\textbf{Step 2} : weak convergence

The sequence $\ens{u_n}_{n\in\N}$ is bounded in $\mathrm{W}^{1,\infty}(S^1,M)$, as $(M,g)$ is compact, and $\ens{L_n}_{n\in\N}$ is bounded. Therefore, Arzelà-Ascoli and Banach-Alaoglu, imply that we can extract a subsequence from $\ens{u_n}_{n\in\N}$ (which is still denoted $\ens{u_n}_{n\in\N}$), such that $\ens{u_n}_{n\in\N}$ in $\mathrm{L}^{\infty}$ and weakly-* to a function $u\in\mathrm{W}^{1,\infty}([0,L],M)$. In particular $\ens{\dot{u}_n}_{n\in\N}$ converges almost everywhere to $\dot{u}$, for all interval $I$ such that for $n$ large enough, $I\subset [0,L_n]$, we have
\begin{align*}
\int_{I}\scal{\dot{u}_n}{\bar{v}_n}d\leb^1\conv \int_{I}\scal{\dot{u}}{\bar{v}}d\leb^1
\end{align*}
and according to \eqref{scal}, 
\[
\frac{1}{\leb^1(I)}\int_{I}\scal{\dot{u}_n}{\bar{v}_n}=1-\frac{1}{\leb^1(I)}\int_{I}\sigma_n^2\kappa^2(u_n)d\leb^1\conv 1.
\]
Furthermore, as $|\dot{u}_n|=1$, and $\ens{\dot{u}_n}_{n\in\N}$ converges almost everywhere to $\dot{u}$ so $|\dot{u}|\leq 1$. According to \eqref{normev}, $|\bar{v}|\leq 1$, so thanks of Cauchy-Schwarz inequality, we have $|\dot{u}|=1$, and $|\bar{v}|=1$. We deduce that
\[
L=\int_{S^1}|\dot{u}|d\leb^1=\lim_{n\rightarrow\infty}L_n\geq \beta(0),
\]
As $E_{\sigma_n}(u_n)=\beta(\sigma_n)$, and $\beta(\sigma_n)\conv \beta(0)$, we get $L=\beta(0)$. Indeed,
\begin{align*}
\beta(0)=\beta(\sigma_n)+o(1)=L_n+\int_{S^1}\sigma_n^2\kappa^2(u_n)|\dot{u}_n|d\leb^1+o(1),
\end{align*}
so $L_n\conv \beta(0)$.

\textbf{Step 3}: limiting equation.

We wish now to pass to the limit in the Euler-Lagrange equation. We need the following technical lemma, stated separetely for the sake of clarity.
\begin{lemme}\label{lemmealacon}
Let $v\in \wt$, and for all $n\in\N$, $v_n=P(u_n)v\in \wtn$, where $P(u_n)$ is the orthogonal projection on $T_{u_n}M$. We have
\begin{align*}
1)\quad&v_n\overset{\mathrm{L}^\infty}\conv v,\\
2)\quad&\ens{D_tv_n}_{n\in\N}\;\text{is bounded in}\;\mathrm{L}^\infty\;\text{et}\; D_tv_n\overset{\mathrm{L}^2}\conv D_tv,\\
3)\quad&\int_{0}^{L_n}\sigma_n^2|D_t^2v_n|d\leb^1\conv 0.
\end{align*}
\end{lemme}
\begin{proof}[Proof of lemma \eqref{lemmealacon}]
If $\mathfrak{n}$ is a normal vector field to $M$, the orthogonal projection $P(u_n):\R^q\rightarrow T_{u_n}M$ is given by
\begin{align*}
P(u_n)v=v-\mathfrak{n}(u_n)\scal{\mathfrak{n}(u_n)}{v}=v-\mathfrak{n}(u_n)\scal{\mathfrak{n}(u_n)-\mathfrak{n}(u)}{v}
\end{align*}
and $u_n\conv u$ in $\mathrm{L}^\infty$, so $P(u_n)v\conv v$ in $L^{\infty}$.
\begin{align*}
D_t(P(u_n)v-v)&=-D\mathfrak{n}(u_n)[\dot{u}_n]\scal{\mathfrak{n}(u_n)-\mathfrak{n}(u)}{v}
-\mathfrak{n}(u_n)\scal{D\mathfrak{n}(u_n)[\dot{u}_n]
-D\mathfrak{n}(u)[\dot{u}]}{v}\\&-\mathfrak{n}(u_n)\scal{\mathfrak{n}(u_n)-\mathfrak{n}(u)}{D_tv}.
\end{align*}
Therefore, $D_t(P(u_n)v)$ is bounded in $\mathrm{L}^{\infty}$, and converges into $\mathrm{L}^2$ to $D_tv$. Finally, 
\begin{align*}
D_t^2(P(u_n)v)=D^2P(u_n)[\dot{u}_n,\dot{u}_n]v+DP(u_n)[D_t\dot{u}_n]v
+2P(u_n)[\dot{u}_n]D_tv+P(u_n)D_t^2v
\end{align*}
and $P$ is $C^{\nu-1}$ (and $\nu-1\geq 2$), so there exist a constant $C$ independent of $n$ such that
\begin{align*}
\int_{0}^{L_n}\sigma_n^2|D_t^2 (P(u_n)v)|^2d\leb^1&\leq
C\sigma_n^2\nwp{v}{2,2}{S^1}+C\sigma_n\np{v}{2}{\left(\int_{0}^{L_n}\sigma_n^2\kappa^2(u_n)d\leb^1\right)}^{\frac{1}{2}}
\conv 0.
\end{align*}
which completes the proof of the lemma.
\end{proof}
For all $\sigma>0$, define $F_\sigma=DE_\sigma-DE_0$. We have
\begin{align*}
\sigma_n^2F_{\sigma_n}(u_n)\cdot v_n&=\sigma_n^2\int_{0}^{L_n}2\scal{D_t^2v_n}{D_t\dot{u}_n}+3\kappa^2(u_n)\scal{\dot{u}_n}{D_tv_n}+2\scal{R(D_t\dot{u}_n,\dot{u}_n)\dot{u}_n}{v_n}d\leb^1\\
\begin{split}
\leq2{\left(\int_{0}^{L_n}\sigma_n^2|D_t^2v_n|d\leb^1\right)}^{\frac{1}{2}}{\left(\int_{0}^{L_n}\sigma_n^2|D_t\dot{u}_n|d\leb^1\right)}^{\frac{1}{2}}+3\np{D_tv_n}{\infty}\int_{0}^{L_n}\sigma_n^2\kappa^2(u_n)d\leb^1\\+2\np[M]{R}{\infty}\np{v_n}{2}\sigma_n{\left(\int_{0}^{L_n}\sigma_n^2\kappa^2(u_n)d\leb^1\right)}^{\frac{1}{2}}\conv 0
\end{split}
\end{align*}
whereas
\begin{align*}
\int_{0}^{L_n}\scal{D_tv_n}{\dot{u}_n}d\leb^1\conv\int_{0}^{L}\scal{D_tv}{\dot{u}}d\leb^1.
\end{align*}
As for all $n\in\N$, $u_n$ is a critical point of $E_{\sigma_n}$, we have
\begin{align*}
DE_{\sigma_n}(u_n)=\int_{0}^{L_n}\scal{D_tv_n}{\dot{u}_n}d\leb^1+\sigma_n^2F_{\sigma_n}(u_n)\cdot v_n=0
\end{align*}
so we deduce
\begin{align*}
\int_{0}^{L}\scal{D_tv}{\dot{u}}d\leb^1=0
\end{align*}
for all $v\in\wt$, \textit{i.e.} $u$ is a distributional solution of
\begin{equation}\label{géodésiquesfinale}
\frac{d^2}{dt^2}u+\fond(\dot{u},\dot{u})=0
\end{equation}
where $\fond$ is the second fundamental form of the immersion $u:S^1\rightarrow (M^m,g)$.
This implies that $\dfrac{d^2}{dt^2}u\in L^{\infty}([0,L],M)$, so $u\in\mathrm{W}^{2,\infty}([0,L])$, and by a immediate bootstrap argument, we get that actually $u\in C^{\nu}([0,L])$, $|\dot{u}|=1$,
\begin{equation}
D_tu=\frac{d^2}{dt^2}u+\fond(\dot{u},\dot{u})=0
\end{equation}
We conclude that $u$ is a non-trivial closed geodesic of length $\beta
(0)>0$.
\end{proof}

\section{Admissible Family Construction}\label{construction}

\begin{theorem}\label{admissiblefamily}
Let $(M^m,g)$ a compact Riemannian manifold of class $C^\nu$ ($\nu\geq 3$). There exists an admissible set $\A$ in $\mathrm{W}^{2,2}(S^1,M)$, in the sense of definition \ref{admissible}.
\end{theorem}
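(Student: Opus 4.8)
The plan is to build $\A$ from a single topologically non-trivial sweep-out, to close it up under the operations allowed by Definition~\ref{admissible}, and then to extract $0<\beta(0)<\infty$ from the injectivity radius of $M$. Being a closed manifold, $M$ is not contractible (its top homology group, with $\Z$ or $\Z/2\Z$ coefficients, is non-trivial), so one of its homotopy groups is non-zero; let $k_0\geq 1$ be minimal with $\pi_{k_0}(M)\neq 0$. The evaluation map $\mathrm{ev}\colon \mathrm{W}^{2,2}(S^1,M)\to M$, $\gamma\mapsto\gamma(0)$, is a fibration with fibre $\Omega M$ split by the inclusion $\iota\colon M\hookrightarrow\mathrm{W}^{2,2}(S^1,M)$ of constant curves, so the homotopy exact sequence of the pair $\big(\mathrm{W}^{2,2}(S^1,M),\iota(M)\big)$ provides a non-trivial class in $\pi_{k}\big(\mathrm{W}^{2,2}(S^1,M),\iota(M)\big)\cong\pi_{k_0}(M)$ with $k:=k_0-1\geq 0$ (append a dummy parameter should the definition require $k\geq 1$). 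I would fix a continuous representative $h_0\colon[0,1]^k\to\mathrm{W}^{2,2}(S^1,M)$ with $h_0(\partial[0,1]^k)\subset\iota(M)$, each of whose values is a constant curve or a smooth immersion; when $\pi_1(M)\neq 0$, so $k_0=1$ and $k=0$, this is just a single non-contractible immersed loop.

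Next I would let $\A$ consist of all sets $A=h([0,1]^k)$, where $h\colon[0,1]^k\to\mathrm{W}^{2,2}(S^1,M)$ is continuous, every $h(t)$ is a constant curve or an element of $\im$, $h(\partial[0,1]^k)\subset\iota(M)$, and $h$ is homotopic to $h_0$ through maps sharing these three properties, together with all images $\psi(A)$ of such $A$ under homeomorphisms $\psi$ of $\im$ isotopic to the identity (extended by the identity on constant curves, as are the homeomorphisms produced by the pseudo-gradient flows of Section~\ref{bonnesuite}). Each such $A$ is non-empty, so $\A\subset\P^*\big(\mathrm{W}^{2,2}(S^1,M)\big)$; condition~(1) of Definition~\ref{admissible} holds by construction and condition~(3) holds with this fixed $k$. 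Condition~(2), namely $\psi(A)\in\A$ for every homeomorphism $\psi$ of $\im$ isotopic to the identity, is built into the definition of $\A$, and it is self-consistent: an isotopy $\{\psi_\tau\}_{\tau\in[0,1]}$ from the identity to $\psi$ deforms any sweep-out $h$ of the above type into $\psi\circ h$ through sweep-outs of the same type, so the homotopy class is unchanged.

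It remains to check $0<\beta(0)<\infty$. Finiteness is immediate: $h_0([0,1]^k)\in\A$, and since $t\mapsto h_0(t)$ is continuous into $\mathrm{W}^{2,2}(S^1,M)\hookrightarrow C^1(S^1,M)$, $[0,1]^k$ is compact and $\mathfrak{L}$ is continuous, $\beta(0)\leq\sup_t\mathfrak{L}(h_0(t))<\infty$. For positivity, put $\rho=\inj(M)>0$ and suppose, for contradiction, that some $A=h([0,1]^k)\in\A$ had $\mathfrak{L}(u)<2\rho$ for all $u\in A$. Then each loop $h(t)$ lies in the geodesic ball $B\big(h(t)(0),\rho\big)$, and the geodesic homotopy
\[
H_s\big(h(t)\big)(\theta)=\exp_{h(t)(0)}\!\Big(s\,\exp_{h(t)(0)}^{-1}\big(h(t)(\theta)\big)\Big),\qquad s\in[0,1],
\]
contracts $h(t)$ to the constant curve $h(t)(0)$; since $\exp$ is $C^{\nu-1}$, this is continuous in $(s,t)$ into $\mathrm{W}^{2,2}(S^1,M)$, is the identity on constant curves, and preserves the inclusion $h(\partial[0,1]^k)\subset\iota(M)$. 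Hence $\{H_s\circ h\}_{s\in[0,1]}$ is an admissible homotopy from $h$ to the all-constant sweep-out $t\mapsto h(t)(0)$, which represents $0$ in $\pi_k\big(\mathrm{W}^{2,2}(S^1,M),\iota(M)\big)$, contradicting $[h]=[h_0]\neq 0$; therefore $\beta(0)\geq 2\inj(M)>0$.

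The two delicate points I expect are: in the simply connected case, realising the non-trivial class by a sweep-out all of whose slices are constant curves or immersions (a transversality argument, easy when $\dim M$ is large compared with $k$, but needing care, or an explicit generator such as the latitude sweep-out of $S^m$, when $\dim M$ is small); and the bookkeeping reconciling homeomorphisms of $\im$ — a priori undefined on constant curves — with sweep-outs that meet the constant curves along $\partial[0,1]^k$, for which it suffices that the only homeomorphisms used in the sequel are the cut-off pseudo-gradient flows of Section~\ref{bonnesuite}, equal to the identity on short and hence on constant curves. The remaining verifications are routine.
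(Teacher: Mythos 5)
Your proof is correct and follows essentially the same strategy as the paper: both extract a non-trivial homotopy class from the non-contractibility of $M$, build $\A$ by closing a topologically non-trivial sweep-out under isotopic homeomorphisms, and derive $\beta(0)>0$ from the injectivity radius via the observation that uniformly short loops can be geodesically contracted, forcing the sweep-out to be null-homotopic. The only differences are packaging: the paper invokes Hurewicz to get $\pi_k(M)\neq 0$, takes a $C^\nu$ representative $f:S^k\to M$, and precomposes with the standard circle sweep-out $g:S^k\to S^k$ of degree one (so the contradiction is that $\phi\circ f\circ g$ would be null-homotopic), whereas you phrase the non-triviality via the relative groups $\pi_k\big(\mathrm{W}^{2,2}(S^1,M),\iota(M)\big)$ coming from the evaluation fibration and define $\A$ to be the full homotopy-and-isotopy closure of $h_0$; both formulations encode the same topological fact and both give an admissible family satisfying Definition~\ref{admissible}.
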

\begin{proof}
According to Hurewicz theorem, if $M$ is a compact manifold, and $\pi_1(M)={1}$ (otherwise, we can minimize directly on a non-trivial homotopy class), then if $k$ is an integer such that $H_i(M)=\ens{1}$ for all $i<k$, $H_k(M)\neq \ens{1}$, then $\pi_i(M)=\ens{1}$ for all $i<k$, and $\pi_k(M)\simeq H_k(M)$. As $H_m(M^m,\Z)\simeq \Z$, there exist $k\leq m$ such that $\pi_k(M)\neq\ens{1}$. Let $f:S^k\rightarrow M$ a continuous homotopically non-trivial. We may assume that $f$ is of class $C^\nu$, because according to Whitney theorem, every continuous map between manifolds is homotopic to a regular map (see \cite{hirsch}, \cite{whitney}). On $S^k$ let us consider the following classical sweepout
\begin{align}\label{sweepout}
\ens{x_3=1-2t_3,\cdots,x_{k+1}=1-2t_{k+1}}
\end{align}
où $t_3,\cdots, t_{k+1}\in [0,1]$. This gives a map $g:S^k\rightarrow S^k$ of degree $1$. Write, for $t\in [0,1]^{k-1}$,  $u_t:S^1\rightarrow S^k$ the circle defined by \eqref{sweepout}. Then for all but finitely $t\in [0,1]^{k-1}$, $u_t$ is an immersed curve.
We define
\begin{align*}
\A=\ens{\ens{\phi\circ f\circ u_t}_{t\in [0,1]^{k-1}}, \phi\in \mathrm{Homeo}_{\,0}(\im)}
\end{align*}
where $\mathrm{Homeo}_{\,0}(\im)$ is the set of locally Lipschitz homeomorphisms of $\im$ isotopic to the identity map.
The manifold $(M^m,g)$ being compact, its injectivity radius $\mathrm{inj}(M)$ is positive. Let us show that for all $\phi\in\mathrm{Homeo}_{\,0}(\im)$, 
\begin{align}\label{injectivite}
\sup_{t\in[0,1]^{k-1}}\mathfrak{L}(\phi\circ f\circ u_t)\geq \mathrm{inj}(M).
\end{align}
We argue by contradiction. If if \eqref{injectivite} is not satisfied, for all $t\in [0,1]^{k-1}$, the curve $\phi\circ f\circ u_t:S^1\rightarrow M$ is null-homotopic. This implies that $\phi\circ f\circ g$ is null-homotopic. Now $\phi$ is isotopic to the identity map, so $\phi\circ f\circ g$ is homotopic to $f\circ g$. But as $g:S^k\rightarrow S^k$ is a degree one map, and $f:S^k\rightarrow M$ homotopically non-trivial, $f\circ g:S^k\rightarrow M$ cannot be null-homotopic. We deduce that
\begin{align*}
\beta(0)=\inf_{A\in \A}\sup_{u\in\A}\mathfrak{L}(u)\geq \mathrm{inj}(M)>0, 
\end{align*}
which concludes the proof of the theorem.
\end{proof}

\section{Lower Semi-Continuity of the Index}\label{indice}

Motivating by the construction by a min-max viscosity method of minimal surfaces of given index, we aim at proving here that the index of the constructed curves in lower semi-continuous.

\begin{defi}
Let $\sigma\geq 0$, and $u$ a critical point of $E_\sigma$. The index of $u$, noted $\mathrm{Ind}(u)\in\N\cup\ens{\infty}$, is equal to the dimension of the larger subspace of $\mathrm{W}_u^{2,2}(S^1,TM)$, on which the second derivative $D^2E_\sigma(u)$ (defined by \eqref{hessienne}) is negative semi-definite.
\end{defi}

The proof of the index lower semi-continuity will be an easy consequence of the following lemma.
\begin{lemme}\label{lemmeindice}
Let $\ens{\sigma_n}_{n\in\N}$ a sequence of positive real numbers converging to $0$. If $\ens{u_n}_{n\in\N}$ is a sequence of critical points associated to $\ens{E_{\sigma_n}}_{n\in\N}$, such that $\ens{u_n}_{n\in\N}$ (resp. $\ens{\dot{u}_n}_{n\in\N}$) converge in $L^{\infty}$ (resp. almost everywhere) to a closed non-trivial geodesic $u\in \im$ (resp. to $\dot{u}$) of length $L>0$. If $\ens{v_n}_{n\in\N}$ is a sequence verifying $v_n\in\mathrm{W}_{u_n}^{2,2}(S^1,TM)$, $v_n\overset{\mathrm{L}^{\infty}}{\conv} v\in \mathrm{W}_u^{2,2}(S^1,TM)$, $D_tv_n\overset{\mathrm{L}^2}\conv D_tv$,  $\ens{v_n}_{n\in\N}$ is bounded in $\mathrm{L}^\infty$, and
\begin{align*}
\int_{S^1}\sigma_n^2\kappa^2(u_n)|\dot{u}_n|d\leb^1\conv 0, \quad \int_{S^1}\sigma_n^2|D_t^2v_n|^2|\dot{u}_n|d\leb^1\conv 0.
\end{align*}
Then
\begin{align*}
D^2E_{\sigma_n}(u_n)[v_n,v_n]\conv D^2E_0(u)[v,v]=\int_{0}^L\left(|D_tv|^2-\scal{D_tv}{\dot{u}}^2-\scal{R(\dot{u},v)v}{\dot{u}}\right)d\leb^1.
\end{align*}
\end{lemme}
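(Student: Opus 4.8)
The plan is to pass to the limit term-by-term in the explicit formula \eqref{hessienne} for $D^2E_{\sigma_n}(u_n)[v_n,v_n]$, using the hypotheses to kill all the $\sigma_n^2$-weighted contributions and to identify the limit of the non-weighted ones. First I would record the a priori bounds that the hypotheses furnish: since $\int_{S^1}\sigma_n^2\kappa^2(u_n)|\dot u_n|\,d\leb^1\to 0$ we have $\sigma_n\norm{D_t\dot u_n}_{\mathrm{L}^2([0,L_n])}\to 0$ and a fortiori $\sigma_n^2\norm{D_t\dot u_n}_{\mathrm{L}^2}^2\to 0$; similarly $\sigma_n\norm{D_t^2v_n}_{\mathrm{L}^2}\to 0$ from the second smallness assumption; and $\norm{v_n}_{\mathrm{L}^\infty}$, $\norm{D_tv_n}_{\mathrm{L}^2}$ are uniformly bounded, $\norm{D_tv_n}_{\mathrm{L}^\infty}$ as well if needed (it is among the hypotheses up to checking — actually only $\mathrm{L}^2$ convergence of $D_tv_n$ is assumed, so I would be careful to use only $\mathrm{L}^2$ bounds for $D_tv_n$ and $\mathrm{L}^\infty$ bounds for $v_n$). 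Also $L_n\to L$ and $|\dot u_n|=1$, so all integrals over $[0,L_n]$ can be compared with integrals over $[0,L]$ after extending or restricting; the curvature tensor $R$ and its covariant derivative are bounded on the compact manifold $M$.

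Next I would go through the three blocks of \eqref{hessienne}. The first block carries an overall factor $2\sigma_n^2$; inside it, every summand is estimated by Cauchy–Schwarz against the bounded quantities above. The genuinely dangerous terms are $2\sigma_n^2\norm{D_t^2v_n}_{\mathrm{L}^2}^2$, which $\to0$ by hypothesis; $2\sigma_n^2\norm{R(v_n,\dot u_n)\dot u_n}_{\mathrm{L}^2}^2\le 2\sigma_n^2\norm{R}_\infty^2\norm{v_n}_\infty^2 L_n\to 0$; the terms with one factor $D_t^2v_n$ and one factor $D_t\dot u_n$, bounded by $C\sigma_n\norm{D_t^2v_n}_{\mathrm{L}^2}\cdot\sigma_n\norm{D_t\dot u_n}_{\mathrm{L}^2}\to0$; the terms with $\kappa^2(u_n)=|D_t\dot u_n|^2$, bounded by $C\sigma_n^2\norm{D_t\dot u_n}_{\mathrm{L}^2}^2(1+\norm{v_n}_\infty^2+\ldots)\to0$ (here I use that e.g. $\scal{D_tv_n}{\dot u_n}^2\kappa^2$ integrates to at most $\norm{D_tv_n}_{\mathrm{L}^2}^2\norm{D_t\dot u_n}_{\mathrm{L}^\infty}^2$ — if no $\mathrm{L}^\infty$ bound on $D_t\dot u_n$ is available one instead pairs $\sigma_n^2\kappa^2$, which is small in $\mathrm{L}^1$, against the $\mathrm{L}^\infty$-bounded remaining factors, so one should be slightly careful to always leave the two powers of $D_t\dot u_n$ inside the small $\sigma_n^2\kappa^2$ weight); and the curvature/derivative-of-curvature terms, all controlled by $C\sigma_n^2\norm{D_t\dot u_n}_{\mathrm{L}^2}(1+\norm{D_tv_n}_{\mathrm{L}^2}+\norm{v_n}_\infty)\to 0$ using $\norm{\D R}_\infty<\infty$. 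Hence the whole first block tends to $0$. The second block is $4\sigma_n^2\int_0^{L_n}\scal{D_tv_n}{\dot u_n}(\scal{D_t^2v_n}{D_t\dot u_n}-2\scal{D_tv_n}{\dot u_n}\kappa^2(u_n)+\scal{R(v_n,\dot u_n)\dot u_n}{D_t\dot u_n})\,d\leb^1$; each of the three pieces again carries at least two powers of $\sigma_n$ distributed over $D_t^2v_n$ and $D_t\dot u_n$ (or over $\sigma_n^2\kappa^2$ directly), with the rest uniformly bounded, so it $\to 0$ too.

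Finally the third block, $\int_0^{L_n}(1+\sigma_n^2\kappa^2(u_n))(|D_tv_n|^2-\scal{D_tv_n}{\dot u_n}^2-\scal{R(\dot u_n,v_n)v_n}{\dot u_n})\,d\leb^1$, is the one that survives. The factor $\sigma_n^2\kappa^2(u_n)$ goes to $0$ in $\mathrm{L}^1$ while the parenthesis is bounded in $\mathrm{L}^\infty$ (using the $\mathrm{L}^2$ bound on $D_tv_n$ only pointwise one must be mildly careful: write $\int \sigma_n^2\kappa^2 |D_tv_n|^2 \le \norm{D_tv_n}_{\mathrm L^\infty}^2 \int \sigma_n^2\kappa^2 \to 0$ if an $\mathrm L^\infty$ bound on $D_tv_n$ holds — which I believe is among the stated hypotheses "$\ens{v_n}$ bounded in $\mathrm{L}^\infty$" combined with the boundedness of $D_tv_n$; if only $\mathrm L^2$ is available one integrates $\sigma_n^2\kappa^2|D_tv_n|^2$ by first noting $\sigma_n^2\kappa^2\to 0$ uniformly is false, so one genuinely needs the $\mathrm L^\infty$ control on $D_tv_n$, and I would cite it from the hypothesis list), so the $\sigma_n^2\kappa^2$ contribution vanishes. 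For the remaining $\int_0^{L_n}(|D_tv_n|^2-\scal{D_tv_n}{\dot u_n}^2-\scal{R(\dot u_n,v_n)v_n}{\dot u_n})\,d\leb^1$: the term $\int |D_tv_n|^2$ converges to $\int_0^L|D_tv|^2$ by the strong $\mathrm{L}^2$ convergence $D_tv_n\to D_tv$ (adjusting the domain $[0,L_n]\to[0,L]$, which costs $o(1)$ since $|D_tv_n|^2$ is bounded in $\mathrm{L}^1$ and $L_n\to L$); the term $\int\scal{D_tv_n}{\dot u_n}^2$ converges to $\int_0^L\scal{D_tv}{\dot u}^2$ because $D_tv_n\to D_tv$ strongly in $\mathrm{L}^2$ and $\dot u_n\to\dot u$ a.e. with $|\dot u_n|=1$ (dominated convergence on the product); and $\int\scal{R(\dot u_n,v_n)v_n}{\dot u_n}$ converges to $\int_0^L\scal{R(\dot u,v)v}{\dot u}$ since $R$ is continuous, $v_n\to v$ in $\mathrm{L}^\infty$, $\dot u_n\to\dot u$ a.e., all bounded, again by dominated convergence. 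Assembling the three blocks gives exactly $D^2E_{\sigma_n}(u_n)[v_n,v_n]\to \int_0^L(|D_tv|^2-\scal{D_tv}{\dot u}^2-\scal{R(\dot u,v)v}{\dot u})\,d\leb^1=D^2E_0(u)[v,v]$, the last equality being \eqref{hessienne} with $\sigma=0$. The main obstacle is purely bookkeeping: \eqref{hessienne} has on the order of twenty terms, and for each one must decide whether to absorb the small factor into $\sigma_n^2\kappa^2$ (small in $\mathrm{L}^1$) or to split it as $\sigma_n\cdot\sigma_n$ across a $D_t^2v_n$ and a $D_t\dot u_n$ (each small in $\mathrm{L}^2$); organizing this cleanly, and making sure every leftover factor is genuinely uniformly bounded using only the hypotheses actually stated (in particular not accidentally invoking an $\mathrm{L}^\infty$ bound on $D_t\dot u_n$, which is \emph{not} available), is where the care is needed.
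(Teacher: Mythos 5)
Your approach is essentially the paper's: term-by-term estimation of \eqref{hessienne}, distributing the small factors --- $\sigma_n^2\kappa^2(u_n)$ (small in $\mathrm{L}^1$), $\sigma_n D_t^2v_n$ and $\sigma_n D_t\dot u_n$ (each small in $\mathrm{L}^2$) --- so that every $\sigma_n^2$-weighted term vanishes, then passing to the limit in the surviving block by strong $\mathrm{L}^2$ convergence of $D_tv_n$ and dominated convergence for $\dot u_n$, $v_n$. The one point on which you hesitate --- whether a uniform $\mathrm{L}^\infty$ bound on $D_tv_n$ is available --- is worth settling: the paper's estimates use $\npl{D_tv_n}{\infty}$ repeatedly, so the bound is genuinely needed (you are right that $\mathrm{L}^2$ convergence alone is not enough against $\sigma_n^2\kappa^2$), and it \emph{is} available in the application since Lemma \ref{lemmealacon} supplies exactly "$\ens{D_tv_n}$ bounded in $\mathrm{L}^\infty$" for $v_n=P(u_n)v$; the clause "$\ens{v_n}$ bounded in $\mathrm{L}^\infty$" in the lemma's hypothesis list is redundant as written (it already follows from $v_n\to v$ in $\mathrm{L}^\infty$) and should be read as intending the bound on $D_tv_n$.
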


\begin{proof}
The hypothesis implies that $\ens{v_n}_{n\in\N}$ is bounded in $L^{\infty}(S^1,M)$, and in $\mathrm{W}^{1,2}(S^1,M)$. Furthermore, theorem \ref{géodésique} shows that $\ens{\dot{u_n}}_{n\in\N}$ converges to $L^p(S^1,M)$ (if $L_n=\mathfrak{L}(u)$, we have $L_n\conv L$) for all $1\leq p<\infty$ according to Lebesgue's dominated convergence theorem.

All estimates are elementary, using only Cauchy-Schwarz inequality, and otherwise, $R$ being a $C^{\nu-2}$ ($\nu-2\geq 1$) tensor on the compact $C^\nu$ manifold $(M,g)$ implies that
\begin{align*}
\max\{\np[M]{R}{\infty},\np[M]{\D R}{\infty}\}<\infty
\end{align*}

As for all $n\in\N$,  $|\dot{u}_n|=1$, 
\begin{align*}
&\sup_{n\in\N}\np[M]{\nabla_{\dot{u}_n}R}{\infty}\leq \np[M]{\D R}{\infty}<\infty\\
&\sup_{n\in\N}\np[M]{\nabla_{v_n}R}{\infty}\leq \np[M]{\D R}{\infty}\sup_{n\in\N}\np{v_n}{\infty}<\infty
\end{align*}
We have
\begin{align*}
\sigma_n^2\int_{0}^{L_n}|D_t^2v_n|^2+|R(v_n,\dot{u}_n)\dot{u}_n|^2d\leb^1\leq\int_{0}^{L_n}\sigma_n^2|D_t^2v_n|^2d\leb^1+\sigma_n^2\np[M]{R}{\infty}\npl{v_n}{2}^2\conv 0
\end{align*}
We write
\begin{align*}
K_n=\left(\int_{S^1}\sigma_n^2\kappa^2(u_n)|\dot{u}_n|d\leb^1\right)^{\frac{1}{2}}=\left(\int_{0}^{L_n}\sigma_n^2\kappa^2(u_n)d\leb^1\right)^{\frac{1}{2}}\conv 0.
\end{align*}
We estimate the other terms as following.
\begin{align*}
&\sigma_n^2\left|\int_0^{L_n}\left(4\scal{D_tv_n}{\dot{u}_n}^2+2\scal{D_tv_n}{\dot{u}_n}-|D_tv_n|^2+\scal{R(\dot{u}_n,v_n)v_n}{\dot{u}_n}\right)\kappa^2(u_n)d\leb^1\right|\\
&\leq \left(4\npl{D_tv_n}{\infty}^2+2\npl{D_tv_n}{\infty}+\np[M]{R}{\infty}\npl{v_n}{\infty}\right)K_n^2
\end{align*}
\begin{multline*}
\sigma_n^2\left|\int_{0}^{L_n}\left(\scal{D_t^2v_n}{\dot{u}_n}+\scal{D_tv_n}{D_t\dot{u}_n}\right)^2d\leb^1\right|\leq 2\int_{0}^{L_n}\sigma_n^2\npl{D_t^2v_n}{2}^2+2\npl{D_tv_n}{\infty}^2K_n^2
\end{multline*}
\begin{align*}
\sigma_n^2\left|\int_0^{L_n}\scal{\D_{\dot{u}_n}R(v_n,\dot{u}_n)v_n}{D_t\dot{u}_n}d\leb^1\right|&\leq \sigma_n\np[M]{\D R}{\infty}\npl{v_n}{\infty}\npl{v_n}{2}K_n
\end{align*}
\begin{align*}
\sigma_n^2\left|\int_{0}^{L_n}\scal{\D_{v_n}R(v_n,\dot{u}_n)\dot{u}_n}{D_t\dot{u}_n}d\leb^1\right|\leq \sigma_n\np[M]{R}{\infty}\npl{v_n}{\infty}\npl{v_n}{2}K_n
\end{align*}
\begin{align*}
\sigma_n^2\left|\int_{0}^{L_n}\scal{R(D_tv_n,\dot{u}_n)v_n}{D_t\dot{u}_n}d\leb^1\right|\leq \sigma_n\np[M]{\D R}{\infty}\npl{v_n}{\infty}\npl{D_tv_n}{2}K_n
\end{align*}
\begin{align*}
\sigma_n^2\left|\int_{0}^{L_n}\scal{R(D_t\dot{u}_n,v_n)v_n}{D_t\dot{u}_n}d\leb^1\right|\leq\np[M]{R}{\infty}\npl{v_n}{\infty}^2K_n^2
\end{align*}
\begin{align*}
\sigma_n^2\left|\int_{0}^{L_n}\scal{R(v_n,\dot{u}_n)D_tv_n}{D_t\dot{u}_n}d\leb^1\right|\leq\sigma_n\np[M]{R}{\infty}\npl{v_n}{\infty}\npl{D_tv_n}{2}K_n
\end{align*}
\begin{align*}
\sigma_n^2\left|\int_{0}^{L_n}\scal{R(\dot{u}_n,D_tv_n)\dot{u}_n}{D_t\dot{u}_n}d\leb^1\right|\leq\sigma_n\np[M]{R}{\infty}\npl{D_tv_n}{2}K_n
\end{align*}
\begin{align*}
\sigma_n^2\left|\int_{0}^{L_n}\scal{R(v_n,\dot{u}_n)D_tv_n}{D_t\dot{u}_n}d\leb^1\right|\leq\sigma_n\np[M]{R}{\infty}\npl{v_n}{\infty}\npl{D_tv_n}{2}K_n
\end{align*}
\begin{align*}
\sigma_n^2\left|\int_{0}^{L_n}\scal{D_tv_n}{\dot{u}_n}\scal{R(v_n,\dot{u}_n)\dot{u}_n}{D_t\dot{u}_n} d\leb^1\right|\leq\sigma_n\np[M]{R}{\infty}\npl{v_n}{\infty}\npl{D_tv_n}{2}K_n
\end{align*}
\begin{align*}
&\sigma_n^2\left|\int_0^{L_n}\scal{D_tv_n}{\dot{u}_n}\left(\scal{D_t^2v_n}{D_t\dot{u}_n}-2\scal{D_tv_n}{\dot{u}_n}\kappa^2(u_n)+\scal{R(v_n,\dot{u}_n)\dot{u}_n}{D_t\dot{u}_n}\right)d\leb^1\right|\\
&\leq \sigma_n^2\npl{D_tv_n}{\infty}\np{D_t^2v_n}{2}^2K_n+2\npl{D_tv_n}{\infty}^2K_n^2\\
&\quad+\sigma_n\np[M]{R}{\infty}\npl{v_n}{\infty}\npl{D_tv_n}{2}K_n
\end{align*}
\begin{align*}
&\sigma_n^2\left|\int_0^{L_n}\left(|D_tv_n|^2-\scal{D_tv_n}{\dot{u}_n}^2-\scal{R(\dot{u}_n,v_n)v_n}{\dot{u}_n}\right)\kappa^2(u_n)d\leb^1\right|\\
&\leq \left(\npl{D_tv_n}{\infty}^2+\np[M]{R}{\infty}\npl{v_n}{\infty}^2\right)K_n^2.
\end{align*}
Finally, the unit vector sequence $\ens{\dot{u}_n}_{n\in\N}$ converge almost everywhere to $\dot{u}$ (which is also a unit vector), so we can apply Lebesgue's dominated convergence theorem to get
\begin{align*}
&\int_{S^1}\left(|\D_{\frac{\dot{u}_n}{|\dot{u}_n|}}v_n|^2-\scal{\D_{\frac{\dot{u}_n}{|\dot{u}_n|}}v_n}{\frac{\dot{u}_n}{|\dot{u}_n|}}^2-\scal{R\left(\frac{\dot{u}_n}{|\dot{u}_n|},v_n\right)v_n}{\frac{\dot{u}_n}{|\dot{u}_n|}}\right)|\dot{u_n}|d\leb^1\\
&\conv \int_{S^1}\left(|\D_{\frac{\dot{u}}{|\dot{u}|}}v|^2-\scal{\D_{\frac{\dot{u}}{|\dot{u}|}}v}{\dot{u}}^2-\scal{R\left(\frac{\dot{u}}{|\dot{u}|},v\right)v}{\frac{\dot{u}}{|\dot{u}|}}\right)|\dot{u}|d\leb^1,
\end{align*}
which completes the proof of the lemma.
\end{proof}

\begin{theorem}\label{indextheorem}
Under the hypothesis of \ref{géodésique}, if $\ens{u_n}_{n\in\N}$ is the sequence of critical points associated to $\ens{E_{\sigma_n}}$, to a non-trivial closed geodesic $u\in\im$, we have
\begin{align*}
\mathrm{Ind}(u)\leq \liminf_{n\rightarrow\infty}\mathrm{Ind}(u_n).
\end{align*}
\end{theorem}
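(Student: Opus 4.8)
The goal is to show $\mathrm{Ind}(u)\leq\liminf_{n\to\infty}\mathrm{Ind}(u_n)$ for the limiting geodesic $u$. First I would argue by contradiction: suppose along a subsequence $\mathrm{Ind}(u_n)\leq k$ for all $n$, but $\mathrm{Ind}(u)\geq k+1$. Then there exists a $(k+1)$-dimensional subspace $V\subset\mathrm{W}^{2,2}_u(S^1,TM)$ on which $D^2E_0(u)$ is negative semi-definite; by a standard perturbation argument (since the index is the dimension of the \emph{maximal} such subspace and, by continuity of the quadratic form, a negative semi-definite form on a subspace of an infinite-dimensional Hilbert space can be pushed to negative \emph{definite} on a subspace of the same dimension minus at most the nullity — or more simply, one takes $\mathrm{Ind}(u)$ to contain a subspace where the form is strictly negative), I may assume $D^2E_0(u)[v,v]<0$ for all $v\in V\setminus\{0\}$. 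Fix a basis $v^{(1)},\dots,v^{(k+1)}$ of $V$, which we may take to be smooth ($C^\nu$) vector fields along $u$ since $u$ is a $C^\nu$ geodesic and smooth fields are dense in $\mathrm{W}^{2,2}_u$.

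Next I would transplant these fields to $u_n$. Set $v_n^{(j)}=P(u_n)v^{(j)}$, the orthogonal projection onto $T_{u_n}M$ (using the Nash embedding). By Lemma \ref{lemmealacon} — applied to each fixed smooth $v^{(j)}$, and using that $\sigma_n^2\int_{S^1}\kappa^2(u_n)|\dot u_n|\,d\leb^1\to 0$ from Theorem \ref{bonnextraction} — each sequence satisfies $v_n^{(j)}\overset{\mathrm{L}^\infty}\to v^{(j)}$, $D_tv_n^{(j)}\overset{\mathrm{L}^2}\to D_tv^{(j)}$, $\{v_n^{(j)}\}$ and $\{D_tv_n^{(j)}\}$ are bounded in $\mathrm{L}^\infty$, and $\int_0^{L_n}\sigma_n^2|D_t^2v_n^{(j)}|^2\,d\leb^1\to 0$. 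Thus the hypotheses of Lemma \ref{lemmeindice} are met for each $j$, and more generally — by bilinearity, polarising and applying the lemma to the fields $v_n^{(i)}\pm v_n^{(j)}$ — for every fixed $(\lambda_1,\dots,\lambda_{k+1})\in\R^{k+1}$ we get
\[
D^2E_{\sigma_n}(u_n)\Big[\sum_j\lambda_j v_n^{(j)},\sum_j\lambda_j v_n^{(j)}\Big]\ \conv\ D^2E_0(u)\Big[\sum_j\lambda_j v^{(j)},\sum_j\lambda_j v^{(j)}\Big].
\]

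Now I would exploit compactness of the unit sphere $S^k\subset\R^{k+1}$. On the unit sphere, the continuous function $\lambda\mapsto D^2E_0(u)[\sum_j\lambda_jv^{(j)},\sum_j\lambda_jv^{(j)}]$ attains a negative maximum $-2\mu<0$. The convergence above is pointwise in $\lambda$; to upgrade it to uniform convergence on $S^k$ I would note that $\lambda\mapsto D^2E_{\sigma_n}(u_n)[\sum_j\lambda_jv_n^{(j)},\cdot]$ is a quadratic form whose coefficients are the finitely many numbers $D^2E_{\sigma_n}(u_n)[v_n^{(i)},v_n^{(j)}]$ (each converging by the polarised Lemma \ref{lemmeindice}), so convergence of the coefficients gives uniform convergence of the forms on the compact sphere. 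Hence for $n$ large, $D^2E_{\sigma_n}(u_n)[w,w]<-\mu|w|^2<0$ for all nonzero $w$ in the span $V_n:=\mathrm{span}\{v_n^{(1)},\dots,v_n^{(k+1)}\}$. For this conclusion I also need $V_n$ to be genuinely $(k+1)$-dimensional: since the Gram matrix of $\{v_n^{(j)}\}$ in $\mathrm{W}^{1,2}$ converges to the (invertible) Gram matrix of the linearly independent $\{v^{(j)}\}$, the $v_n^{(j)}$ are linearly independent for $n$ large. Therefore $\mathrm{Ind}(u_n)\geq\dim V_n=k+1$, contradicting $\mathrm{Ind}(u_n)\leq k$. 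This contradiction proves $\mathrm{Ind}(u)\leq\liminf_n\mathrm{Ind}(u_n)$.

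\textbf{Main obstacle.} The delicate point is the reduction to \emph{strict} negativity: the index is defined via negative \emph{semi}-definiteness, so a priori the maximal negative-semi-definite subspace $V$ may contain directions on which $D^2E_0(u)$ vanishes, and for those the perturbation to $u_n$ could move the form to the positive side, losing dimensions. I would handle this by the standard fact that if $D^2E_0(u)$ is negative semi-definite on a $d$-dimensional space, then it is negative \emph{definite} on a $d$-dimensional space as well — one either invokes that $E_0$ is the length functional whose Hessian at a geodesic is the classical index form, whose kernel (Jacobi fields that are periodic) is finite-dimensional and for the min-max geodesic can be absorbed, or, cleanest, one simply redefines the argument to start from a subspace where the Morse index form is strictly negative of dimension equal to $\mathrm{Ind}(u)$ minus the nullity, and then separately argues the nullity contributes in the limit — but since the statement is only a \emph{lower} semi-continuity inequality, it suffices to produce, for each $\ell<\mathrm{Ind}(u)$, an $\ell$-dimensional strictly-negative subspace, which always exists inside any negative semi-definite subspace of dimension $\mathrm{Ind}(u)$ by a generic small perturbation within that subspace. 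Everything else is the bookkeeping of Lemma \ref{lemmeindice} and elementary linear algebra on a compact sphere.
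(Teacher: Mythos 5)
Your overall strategy --- transplant a maximal negative subspace of $D^2E_0(u)$ to $T_{u_n}\im$ via $v_n^{(j)}=P(u_n)v^{(j)}$, invoke Lemma \ref{lemmealacon} to verify the hypotheses of Lemma \ref{lemmeindice}, and then check linear independence for $n$ large --- is exactly the paper's. But you go a genuine step further: the paper only records $D^2E_{\sigma_n}(u_n)[v_n^j,v_n^j]\to D^2E_0(u)[v^j,v^j]<0$ for each basis vector separately and immediately concludes $\mathrm{Ind}(u_n)\ge I$. That is not sufficient: negativity of a quadratic form on the $I$ basis vectors of a span does not imply negativity (even semi-definiteness) on the span. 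Your polarization of Lemma \ref{lemmeindice} to get convergence of the off-diagonal entries $D^2E_{\sigma_n}(u_n)[v_n^{(i)},v_n^{(j)}]$, and the resulting uniform convergence of the finite-dimensional quadratic forms over the compact sphere $S^k$, is the correct way to close this gap. Good.

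Where your write-up does run into trouble is precisely the spot you yourself flag as the ``main obstacle.'' The index in the paper is defined via negative \emph{semi}-definiteness, and both the paper's argument and yours silently switch to $D^2E_0(u)[v,v]<0$ on the chosen subspace. Your proposed patch --- that inside any $\mathrm{Ind}(u)$-dimensional negative-semi-definite subspace one can find, ``by a generic small perturbation within that subspace,'' a strictly-negative subspace of dimension $\ell$ for every $\ell<\mathrm{Ind}(u)$ --- is false as stated. If the form has nullity $k\ge 2$ on that subspace (in the extreme, if it vanishes identically on a $2$-dimensional piece), the maximal strictly-negative subspace has dimension exactly $\mathrm{Ind}(u)-k$, and no perturbation of the \emph{vectors} (the form itself is fixed) can do better. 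Moreover, even if the patch worked, producing $\ell$-dimensional strictly-negative subspaces for all $\ell<\mathrm{Ind}(u)$ would only give $\liminf_n\mathrm{Ind}(u_n)\ge\mathrm{Ind}(u)-1$, since the index is integer-valued. What the argument (yours and the paper's) actually proves, cleanly, is that the \emph{strict} index $\mathrm{Ind}_0(u)=\dim\{$maximal subspace on which $D^2E_0(u)<0\}$ satisfies $\mathrm{Ind}_0(u)\le\liminf_n\mathrm{Ind}(u_n)$; passing to the kernel directions requires an additional argument (for instance, showing that the quadratic forms converge in a strong enough operator sense for the eigenvalues at $0$ to not cross into the positive half-line), which neither you nor the paper supplies. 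You identified the right difficulty, but the resolution you sketch does not hold; either the definition should read ``negative definite,'' or the nullity must be handled separately.
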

\begin{proof}
If $v\in\mathrm{W}_{u}^{2,2}(S^1,TM)$, and $P(u_n)$ is the orthogonal projection $\R^q\rightarrow T_{u_n}M$, if $v_n=P(u_n)$, la suite $\ens{v_n}_{n\in\N}$ thanks of lemma \ref{lemmealacon}, $\ens{v_n}_{n\in\N}$ satisfies the hypothesis of lemma \ref{lemmeindice}. If $v^1,\cdots,v^{I}\in\mathrm{W}^{1,2}(S^1,M)$ is a free orthonormal family in $\mathrm{L}^2(S^1,M)$ such that $D^2E_0$ is negative semi-definite on $\mathrm{Span}\ens{v^1,\cdots,v^I}$, then, if $v_n^j=P(u_n)v^j$, the family $\ens{v_n^1,\cdots,v_n^I}$ is free in $\mathrm{W}_{u_n}^{2,2}(S^1,M)$, for $n$ large enough. As
$D^2E_0(u)[v^j,v^j]<0$, we have
\[
D^2E_{\sigma_n}(u_n)[v_n,v_n]\conv D^2E_0(u)[v^j,v^j]<0
\] 
so for $n$ large enough, $D^2E_{\sigma_n}(u_n)[v_n,v_n]<0$, and $\ens{v_n^1,\cdots,v_n^I}$ is free, thus
\begin{align*}
I\leq \liminf_{n\rightarrow\infty}\mathrm{Ind}(u_n).
\end{align*}
This implies that
\begin{align*}
\mathrm{Ind}(u)\leq \liminf_{n\rightarrow\infty}\mathrm{Ind}(u_n).
\end{align*}
which concludes the proof of the theorem.
\end{proof}

\newpage

\section{Counter-examples}\label{counterexamples}

\subsection{Counter-examples in Dimension 1}

Let $(M^2,g)$ a compact $C^3$ Riemannian surface of constant Gauss curvature $K_M\in\R$ (which is just equal to the sectional curvature in our convention). Let $\sigma>0$, and $u_\sigma$ a critical point of $E_\sigma$. We know that $u$ is $C^2$ and satisfies \eqref{presque2}
\begin{equation}\label{presque2}
D_t\dot{u}=\sigma^2\left\{D_t\left(2D^2_t\dot{u}+3\kappa^2\dot{u}\right)+2R(D_t\dot{u},\dot{u})\dot{u}\right\}
\end{equation}

Let $\nu$ a normal vector to the curve $u$, et $k$ the signed curvature, defined as 
\[
D_t\dot{u}=k\nu
\]
As $\scal{D_t\dot{u}}{\dot{u}}=0$, $k$ is well-defined. Moreover, Frénet equations in dimension $2$ imply that
\[
D_t\nu=-k\dot{u},
\]
so
\begin{align*}
 D_t^2\dot{u}&=\dot{k}\nu-k^2\dot{u},\\
 D_t^3\dot{u}&=\ddot{k}\nu+\dot{k}(-k\dot{u})-2\dot{k}k\dot{u}-k^2(k\nu)=\ddot{k}\nu-3\dot{k}k\dot{u}-k^3\nu.
\end{align*}
As 
\begin{align*}
D_t(k^2\dot{u})=2\dot{k}k\dot{u}+k^3\nu
\end{align*}
the equation \eqref{presque2} is equivalent to
\[
k\nu=\sigma^2(2\ddot{k}+k^3)\nu+2\sigma^2kR(\nu,\dot{u})\dot{u}
\]
Taking the scalar product with $\nu$, we get
\begin{equation}\label{bonneq}
k_\sigma(t)=\sigma^2(2\ddot{k}_\sigma(t)+k_\sigma^3(t)+2K_Mk_\sigma(t))
\end{equation}

We can explicitly solve this equation thanks of Jacobi elliptic functions (see \cite{LZ1}, \cite{Handbook}).

Let $0\leq p<1$, we define $f_p:\R\rightarrow\R$
\begin{equation}
f_p(t)=\int_{0}^t\frac{d\theta}{\sqrt{1-p^2\sin^2\theta}}
\end{equation}
and Jacobi elliptic functions $\sn,\cn$ and $\dn$ as
\begin{align*}
\sn(t,p)&=\sin f_p^{-1}(t)\\
\cn(t,p)&=\cos f_p^{-1}(t)\\
\dn(t,p)&=\sqrt{1-p^2\sn^2(t)}
\end{align*}
and the function $\K:[0,1[\rightarrow \R_+$ by
\[
\K(p)=f_p\left(\frac{\pi}{2}\right)=\int_0^{\frac{\pi}{2}}\frac{d\theta}{\sqrt{1-p^2\sin^2\theta}}.
\]
The functions $\sn, \cn$, are $4\K$-periodic, and $\dn$ is $2\K$-periodic. If we write $\sn_p=\sn(\cdot,p)$, $\cn_p=\cn(\cdot,p)$, $\dn_p=\dn(\cdot,p)$, we have
\begin{align*}
\dot{\sn_p}&=\cn_p\dn_p\\
\dot{\cn_p}&=-\cn_p\dn_p\\
\dot{\dn_p}&=-p^2\sn_p\cn_p
\end{align*}
and if $\dn=\dn(\cdot,p)$ ($0\leq p<1$),
\begin{align*}
\ddot{\dn}+2\dn^3-(2-p^2)\dn=0.
\end{align*}
The function $t\mapsto u(t)=a\,\dn(bt,p)$, $u$ is a solution of the differential equation
\[
\ddot{u}+2\left(\frac{b}{a}\right)^2u^3-b^2(2-p^2)u=0
\]
Fix $0\leq p<1$, we have
\begin{align}\label{formulecounter}
k_\sigma(t)=\pm \left(\frac{1}{\sigma^2}\frac{2(1-2\sigma^2K_M)}{(2-p^2)}\right)^\frac{1}{2}\dn\left({\left(\frac{1-2\sigma^2K_M}{2(2-p^2)}\right)}^{\frac{1}{2}}\frac{t}{\sigma},p\right)
\end{align}
and
\[
\sigma^2k_\sigma^2(t)= 2\frac{1-2\sigma^2K_M}{2-p^2}\left(1-p^2\sn^2\left(\left(\frac{1-2\sigma^2K_M}{2(2-p^2)}\right)^{\frac{1}{2}}\frac{t}{\sigma},p\right)\right)
\]
If $C(\sigma)$ is the constant
\[
C(\sigma)=\left(\frac{1-2\sigma^2K_M}{2(2-p(\sigma)^2)}\right)^{\frac{1}{2}}
\]
Then, $k_\sigma^2$ is a $2\sigma C(\sigma)^{-1}\K(p(\sigma))$-periodic function and $L(\sigma)$-periodic ($L(\sigma)=\mathfrak{L}(u)$). Thus there exists $m(\sigma)\in\N$ such that $L(\sigma)=2\sigma m(\sigma)C(\sigma)^{-1}\K(p(\sigma))$. In particular (see \cite{LZ1}, p. 19),
\begin{align*}
\int_{0}^{L(\sigma)}\sigma^2k_\sigma^2(t)dt&=8\sigma\, m(\sigma)C(\sigma)\int_{0}^{\K(\sigma)}\dn_{p(\sigma)}^2(t)dt\\
&=4\sigma\,m(\sigma)\sqrt{\frac{2-4\sigma^2K_M}{2-p(\sigma)^2}}\int_{0}^{\frac{\pi}{2}}\sqrt{1-p(\sigma)^2\sin^2(t)}dt
\end{align*}

If $\ens{\sigma_n}_{n\in\N}$ is a sequence of positive numbers converging to $0$ such that
\begin{align*}
L=\lim_{n\rightarrow\infty} L(\sigma_n)>0,
\end{align*}
if $p(\sigma_n)\conv p\in [0,1[$, we have $\displaystyle\K(p(\sigma_n))\conv \K(p)\in \left[\frac{\pi}{2},\infty\right)$
and
\begin{align}
\int_{0}^{L(\sigma_n)}\sigma_n^2 k_{\sigma_n}^2(t)dt &= \frac{2L(\sigma_n)C(\sigma_n)^2}{\K(p(\sigma_n))}\int_{0}^{\frac{\pi}{2}}\sqrt{1-p(\sigma_n)^2\sin^2(t)}dt\nonumber\\
&\conv \frac{4L}{(2-p^2)\K(p)}\int_{0}^{\frac{\pi}{2}}\sqrt{1-p^2\sin^2(t)}\,dt>0,\label{degenerescence}
\end{align}
which give a family of counter-examples, as we will see in next section.

\subsubsection{Explicit Counter-example on $S^2$}

The goal of this section is to prove the following result.

\begin{prop}\label{countersphere}
	On $S^2$ equipped with its standard metric, let $\A$ the admissible set of curves given by the canonical sweep-out on $S^2$.  There exists a sequence $\ens{\sigma_n}_{n\in\N}$ of positive real numbers converging to $0$ and a sequence of critical points $\ens{u_n}_{n\in\N}$ of $\ens{E_{\sigma_n}}_{n\in\N}$, and a curve $u\in \mathrm{W}^{1,2}(S^1,M)$, such that \vspace{-0.4em}
	\begin{align*}
	E_{\sigma_n}(u_n)\conv \beta(0)=\pi, \quad \mathfrak{L}(u_n)\conv \frac{\pi}{2}
	\end{align*}
	and
	\begin{align*}
	u_n\conv[\mathrm{L}^\infty] u\quad\text{strongly},\quad \quad u_n\xrightarrow[n\rightarrow\infty]{\mathrm{W}^{1,2}} u\quad\text{weakly},\quad \text{and}\quad \dot{u}_n\centernot{\xrightarrow[n\rightarrow\infty]{}} \dot{u} \quad \text{a.e.}
	\end{align*}
	Furthermore, there exists a negligible subset $N\subset S^1$ such that
	$\ens{\dot{u}_n(t)}_{n\in\N}$ has no limit point for all $t\in S^1\setminus N$,
	and for all open interval $I\subset S^1$,
	\begin{align*}
	\mathfrak{L}(u|I)<\liminf_{n\rightarrow\infty}\mathfrak{L}(u_n|I).
	\end{align*}
\end{prop}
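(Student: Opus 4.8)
The plan is to exhibit the curves $u_n$ \emph{explicitly} as closed curves on $S^2$ whose signed geodesic curvature is prescribed by the Jacobi--elliptic formula \eqref{formulecounter} (with $K_M=1$), and to read off every asserted asymptotic from the identities already obtained above; the main new ingredient is the reconstruction of a curve on $S^2$ from its signed curvature. First fix the modulus $p^{\ast}\in(0,1)$ for which
\[
\frac{1}{\K(p^{\ast})}\int_{0}^{\pi/2}\sqrt{1-p^{\ast 2}\sin^{2}t}\;dt=\frac{2-p^{\ast 2}}{4},
\]
which exists by the intermediate value theorem since the left--hand side decreases continuously from $1$ (at $p=0$) to $0$ (as $p\to1$) while the right--hand side stays in $[\tfrac14,\tfrac12]$; by \eqref{degenerescence} this is precisely the relation forcing $\sigma_n^{2}\int_{S^1}\kappa^{2}(u_n)|\dot u_n|\,d\leb^1\to\lim_{n}\mathfrak{L}(u_n)$, hence $E_{\sigma_n}(u_n)\to2\lim_{n}\mathfrak{L}(u_n)$, so it will be enough to arrange $\mathfrak{L}(u_n)\to\pi/2$. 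For each small $\sigma$, solving on $S^2$ the Fr\'enet system $D_{t}\dot u=k_{\sigma}\,\nu$, $D_{t}\nu=-k_{\sigma}\,\dot u$ with $k_{\sigma}$ as in \eqref{formulecounter} (modulus close to $p^{\ast}$) and unit--speed initial data, Cauchy--Lipschitz produces a $C^{\nu}$ unit--speed curve which is critical for $E_{\sigma}$ by the computation leading to \eqref{bonneq}. Since $k_{\sigma}$ has minimal period $T(\sigma)=2\sigma C(\sigma)^{-1}\K(p)$, one chooses $\sigma_n\downarrow0$, an integer $m_n\to\infty$ of periods, and a vanishing adjustment of $p$ around $p^{\ast}$ --- two continuous parameters to satisfy the two closure conditions, periodicity of the curve and periodicity of its tangent frame on $S^2$ --- to obtain $u_n\in\im$ closed and critical with $\mathfrak{L}(u_n)=m_nT(\sigma_n)\to\pi/2$, exactly as in the closure discussion preceding \eqref{degenerescence}; then $E_{\sigma_n}(u_n)\to\pi$.

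Next one shows $\beta(0)=\pi$. The lower bound $\beta(0)\ge\mathrm{inj}(S^2)=\pi$ is the topological argument of Theorem~\ref{admissiblefamily}: if some sweep--out in $\A$ consisted of curves all of length $<\mathrm{inj}(S^2)$, each would lie in a geodesically convex ball, hence be null--homotopic, which would make the degree--one map $f\circ g:S^2\to S^2$ null--homotopic. For the converse one exhibits an admissible sweep--out realizing $\pi$: starting from the latitude--circle sweep--out of Theorem~\ref{admissiblefamily}, apply a homeomorphism of $\im$ isotopic to the identity that cuts off the circles of radius $<\sigma$ near the poles (replacing them by constant curves, contributing $E_{\sigma}=O(\sigma)$) and shortens the circles of length $>\pi$ to concentric circles of length $\le\pi$ (which have bounded geodesic curvature, hence $E_{\sigma}\le\pi+O(\sigma^{2})$). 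The resulting family lies in $\A_0$ and has $\sup E_{\sigma}\le\pi+O(\sigma)$, so $\beta(\sigma)\le\pi+O(\sigma)$; combined with $\beta(\sigma)\to\beta(0)$ (Section~\ref{bonnesuite}) this gives $\beta(0)=\pi$.

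Finally one analyses the limit. Since $|\dot u_n|\equiv1$ and $M=S^2$ is compact, Arzel\`a--Ascoli gives, along a subsequence, $u_n\to u$ in $\mathrm{L}^{\infty}(S^1,M)$, and Banach--Alaoglu gives $u_n\rightharpoonup u$ weakly in $\mathrm{W}^{1,2}$ with $|\dot u|\le1$ a.e. The decisive point is that, writing $\theta_n$ for the tangent angle of $u_n$ in a local orthonormal frame along $u$, one has $|\dot\theta_n|=|k_{\sigma_n}|\ge c\,\sigma_n^{-1}\to\infty$ \emph{uniformly} (because $\dn_{p^{\ast}}\ge\sqrt{1-p^{\ast 2}}>0$ by \eqref{formulecounter}), while the rescaled curvature profile $\dn_{p^{\ast}}$ is non--constant. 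Hence on every fixed interval $I$ the total variation of $\theta_n$ tends to $\infty$ with a non--constant local rate, and a Fubini argument shows that for a.e.\ $t\in S^1$ the unit vectors $\dot u_n(t)$ are asymptotically equidistributed on the unit circle of $T_{u(t)}M$; in particular $\{\dot u_n(t)\}_{n\in\N}$ does not converge, which provides the negligible exceptional set $N$ and shows $\dot u_n\not\to\dot u$ a.e. In particular $\dot u_n$ does not converge strongly in $\mathrm{L}^2(I)$ on any interval $I$, so the weak limit $\dot u$ satisfies $\int_I|\dot u|^2\,d\leb^1<\leb^1(I)$ (strict lower semicontinuity of the $\mathrm{L}^2$ norm under weak, non--strong convergence), whence by Cauchy--Schwarz
\[
\mathfrak{L}(u|I)=\int_{I}|\dot u|\,d\leb^1<\leb^1(I)=\lim_{n\to\infty}\mathfrak{L}(u_n|I),
\]
and in particular $u_n\not\to u$ strongly in $\mathrm{W}^{1,2}$.

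The two delicate steps are the following. First, realizing simultaneously the closure of $u_n$ and $\mathfrak{L}(u_n)\to\pi/2$: the curve obtained on $S^2$ from a periodic curvature closes only after a holonomy condition (its frame is rotated by the enclosed area over each period), and one must verify, by a continuity/monotonicity analysis of $T(\sigma)$ and of this holonomy in $(\sigma,p)$, that both conditions are met along a sequence $\sigma_n\downarrow0$ with $m_n\to\infty$ and $m_nT(\sigma_n)\to\pi/2$. Second, upgrading ``$\theta_n$ has unbounded variation on every interval'' to the almost--everywhere non--convergence of $\{\dot u_n(t)\}$: this uses the equidistribution of the fixed non--constant profile $\dn_{p^{\ast}}$ sampled at frequency $\sigma_n^{-1}$, together with a measure--theoretic argument confining $N$ to a null set; everything else reduces to the elementary estimates already used in Sections~\ref{bonnesuite}--\ref{Passage} and to the explicit elliptic identities above.
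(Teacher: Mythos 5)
Your proposal misses the key simplification that the paper exploits: the modulus $p=0$. The paper takes the constant-curvature solution of \eqref{bonneq}, which on $S^2$ is an explicit circle of latitude near the north pole,
\[
u_\sigma(t)=\frac{\sigma}{\sqrt{1-2\sigma^2}}\left(\cos\Big(\tfrac{\sqrt{1-2\sigma^2}}{\sigma}t\Big),\ \sin\Big(\tfrac{\sqrt{1-2\sigma^2}}{\sigma}t\Big),\ \tfrac{\sqrt{1-2\sigma^2}}{\sigma}\sqrt{1-\tfrac{\sigma^2}{1-2\sigma^2}}\right),
\]
wound $m(\sigma)$ times. Here $\sigma^2 k_\sigma^2 \equiv 1-2\sigma^2$, so $E_\sigma(u_\sigma)=2L(\sigma)(1-\sigma^2)$ identically — the ratio $E/L\to 2$ is automatic, no tuning of $p$ is needed. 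Taking $\sigma_n=1/(4n)$ and $m_n=n$ gives $L_n\to\pi/2$, $E_{\sigma_n}(u_n)\to\pi$; the $L^\infty$ limit is the constant curve at the north pole, so $\mathfrak{L}(u|_I)=0$ trivially; Riemann--Lebesgue gives weak $\mathrm{W}^{1,2}$ convergence; and the a.e.\ non-convergence of $\dot u_n(t)$ is the elementary density of $\{\cos(n\alpha)\}_{n}$, $\{\sin(n\alpha)\}_n$ for irrational $\alpha$.

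Your choice of a transcendental modulus $p^{\ast}\in(0,1)$ is driven by the equation $\tfrac{1}{\K(p)}\int_0^{\pi/2}\sqrt{1-p^2\sin^2 t}\,dt=\tfrac{2-p^2}{4}$, which you derived from \eqref{degenerescence}. That formula, however, carries a spurious factor of $2$: substituting $s=C(\sigma)t/\sigma$ and using $\int_0^{\K(p)}\dn_p^2=E(p)$ gives $\int_0^{L}\sigma^2 k_\sigma^2\,dt\to\frac{2L}{(2-p^2)\K(p)}E(p)$, not $\frac{4L}{(2-p^2)\K(p)}E(p)$, and at $p=0$ this limit is exactly $L$, which is what the paper's explicit identity $E_\sigma(u_\sigma)=2L(\sigma)(1-\sigma^2)$ and its stated limits $\epsilon(p)\to\tfrac12$ as $p\to 0$ both confirm. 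So $p=0$ already achieves $E/L\to 2$.

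Once you insist on $p^\ast\neq 0$, you incur two real obstructions that you yourself flag as ``delicate'' and then do not resolve. The first — the closure problem for a curve on $S^2$ reconstructed from a prescribed non-constant periodic curvature $k_\sigma=\mathrm{const}\cdot\dn_{p}(\cdot)$ — is a genuine holonomy condition; a two-parameter degree argument in $(\sigma,p)$ \emph{might} close it, but this is where the bulk of the work would actually lie, and the paper's $p=0$ curves are closed by inspection (they are round circles). The second — upgrading ``$\theta_n$ has divergent derivative with non-constant profile'' to ``$\dot u_n(t)$ has no a.e.\ limit and $\mathfrak{L}(u|_I)<\liminf\mathfrak{L}(u_n|_I)$'' — would need a quantitative Weyl-type equidistribution of the sampled $\dn_{p^\ast}$ profile; at $p=0$ this degenerates to the classical irrational-rotation density fact, which is immediate. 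As written these two steps are gaps, not details, and they disappear entirely if you take $p=0$. (Your strict lower-semicontinuity argument for the last inequality is correct logic, but it is also superfluous once one notices the limit curve is constant.) Finally, note your verification of $\beta(0)=\pi$ is additional material the paper's proof does not attempt; the paper simply identifies $\pi$ as the expected min-max value for the canonical sweep-out. That part of your argument is a reasonable supplement but is not where the problem lies.
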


\begin{proof}
	The shortest closed geodesics on $S^2$ equipped with the standard metric are of length $\pi$ (the great circles). We choose $p=0$ in  and define 
\begin{align*}
u_\sigma(t)=\frac{\sigma}{(1-2\sigma^2)^{\frac{1}{2}}}\left(\cos\left((1-2\sigma^2)^\frac{1}{2}\frac{t}{\sigma}\right),\sin\left((1-2\sigma^2)^\frac{1}{2}\frac{t}{\sigma}\right),\frac{(1-2\sigma^2)^{\frac{1}{2}}}{\sigma}\sqrt{1-\frac{\sigma^2}{1-2\sigma^2}}\right)
\end{align*}
then $|\dot{u}_\sigma|=1$, and on $S^2$, 
\begin{align*}
D_t\dot{u}_\sigma(t)=\ddot{u}_\sigma(t)=-\frac{(1-2\sigma^2)^\frac{1}{2}}{\sigma}u_\sigma(t)
\end{align*}
so 
\begin{align*}
k(u_\sigma(t))=-\frac{(1-2\sigma^2)^{\frac{1}{2}}}{\sigma}
\end{align*}
and $u_\sigma$ is a critical point of $E_\sigma$ for all $\sigma>0$. And for all $\ens{\sigma_n}_{n\in\N}$ converging to $0$,
\begin{align*}
u_{\sigma_n}\overset{\mathrm{L}^\infty}{\conv} (0,0,1)
\end{align*}
and as $C(\sigma)=\dfrac{1}{2}(1-2\sigma^2)^\frac{1}{2}$, $\K(0)=\dfrac{\pi}{2}$, we have
\begin{align*}
E_\sigma(u_\sigma)=2L(\sigma)(1-\sigma^2)=4\pi\sigma m(\sigma)\frac{1-\sigma^2}{(1-2\sigma^2)^\frac{1}{2}}
\end{align*}
where $m(\sigma)$ is an arbitrary integer. So if we choose
\begin{align*}
\sigma_n=\frac{1}{4n},\quad m(\sigma_n)=n
\end{align*}
then writing $u_n=u_{\sigma_n}$,
\begin{align*}
E_{\sigma_n}(u_{n})\conv \pi=\beta(0),\quad L(u_{n})\conv \frac{\pi}{2}= \frac{\beta(0)}{2}
\end{align*}
while
\begin{align*}
u_{n}\overset{\mathrm{L}^\infty}{\conv} u\equiv(0,0,1)
\end{align*}
and according to Riemann-Lebesgue lemma, $\ens{\dot{u}_n}_{n\in\N}$ converges weakly in $\mathrm{L}^2$ to $\dot{u}$, and if we consider $\ens{\dot{u}_n}_{n\in\N}$ as a sequence of functions on $\R$ (by periodicity), for all $t\in\R/\Q$, $\ens{\dot{u}_n(t)}_{n\in\N}$ has no limit point (as for all $\alpha\in\R/\Q$, $\ens{\cos(n\alpha)}_{n\in\N}$ and $\ens{\sin(n\alpha)}_{n\in\N}$ are dense in $[-1,1]
$). So finally, we have
\begin{align*}
u_n\conv u\quad\text{weakly in}\; \mathrm{W}^{1,2}(S^1,S^2)
\end{align*}
and for all open interval $I\subset S^1$,
\begin{align*}
0=\mathfrak{L}(u(I))<\liminf_{n\rightarrow\infty}\mathfrak{L}(u_n(I))=\frac{\pi}{2}\leb^1(I).
\end{align*}
which concludes the proof of the proposition.
\end{proof}

\subsubsection{Surfaces of Constant Gauss Curvature}

	\begin{prop}
		Let $(M^2,g)$ a compact Riemannian surface of constant Gauss curvature, and $\beta(0)$ is the length of the shortest closed geodesic in $(M^2,g)$. For all $1\leq 2\epsilon<2$, there exists a sequence of positive numbers $\ens{\sigma_n}_{n\in\N}$ converging to $0$, and a sequence of critical points $\ens{u_n}_{n\in\N}$ associated to $\ens{E_{\sigma_n}}_{n\in\N}$ such that
		\begin{align*}
		E_{\sigma_n}(u_n)\conv \beta(0),\quad \mathfrak{L}(u_n)\conv \epsilon \beta(0)
		\end{align*}
		and
		\begin{align*}
		u_n\conv[\mathrm{L}^\infty] u\quad\text{strongly},\quad  u_n\xrightarrow[n\rightarrow\infty]{\mathrm{W}^{1,2}} u\quad\text{weakly},\quad\text{and}\quad \dot{u}_n\centernot{\xrightarrow[n\rightarrow\infty]{}} \dot{u} \quad \text{a.e.}
		\end{align*}
		and
		\begin{align*}
		\mathfrak{L}(u)<\liminf_{n\rightarrow\infty}\mathfrak{L}(u_n).
		\end{align*}
		\end{prop}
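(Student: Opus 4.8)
The plan is to generalize the explicit construction carried out for $S^2$ in the previous proposition to an arbitrary compact surface $(M^2,g)$ of constant Gauss curvature $K_M$, using the explicit solutions of the Euler--Lagrange equation \eqref{bonneq} given by the Jacobi elliptic function formula \eqref{formulecounter}. First I would fix a shortest closed geodesic $\gamma:S^1\to M$ of length $\beta(0)$ and work in a tubular neighbourhood of $\gamma$; by constant curvature the geometry near $\gamma$ is a model space (a flat cylinder if $K_M=0$, or the appropriate constant-curvature band if $K_M\neq 0$), so that explicit curves can be written down. The idea is to choose the elliptic modulus $p\in[0,1)$ and the integer $m(\sigma)$ in the formula for $k_\sigma$ so that the resulting closed curve $u_\sigma$ wraps $m(\sigma)$ times around a small ``latitude'' near $\gamma$ while its curvature oscillates rapidly; as $\sigma\to 0$ the curve collapses in $L^\infty$ onto a point or onto a short multiple cover, but its derivative oscillates and does not converge a.e.

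The key steps in order: (1) Using \eqref{formulecounter} and the periodicity bookkeeping already done (the period of $k_\sigma^2$ is $2\sigma C(\sigma)^{-1}\K(p(\sigma))$, hence $L(\sigma)=2\sigma m(\sigma)C(\sigma)^{-1}\K(p(\sigma))$), solve for the free parameters: given the target ratio $\epsilon$ with $1\le 2\epsilon<2$, pick $p=p(\epsilon)\in[0,1)$ so that the limiting relation between $L(\sigma)$, the energy $E_\sigma(u_\sigma)=\mathfrak{L}(u_\sigma)+\int\sigma^2 k_\sigma^2$ and $\beta(0)$ forces $E_{\sigma_n}(u_n)\to\beta(0)$ and $\mathfrak{L}(u_n)\to\epsilon\beta(0)$; this uses precisely the computation \eqref{degenerescence}, namely $\int_0^{L(\sigma_n)}\sigma_n^2k_{\sigma_n}^2\to \tfrac{4L}{(2-p^2)\K(p)}\int_0^{\pi/2}\sqrt{1-p^2\sin^2 t}\,dt$, and the constraint $\epsilon\beta(0)+\text{(this limit)}=\beta(0)$ should be solvable for $p$ in the stated range. (2) Choose $\sigma_n\to 0$ and $m(\sigma_n)\to\infty$ so that $L(\sigma_n)\to\epsilon\beta(0)$ exactly, mimicking the choice $\sigma_n=\frac{1}{4n}$, $m(\sigma_n)=n$ from the $S^2$ case. (3) Verify $u_n$ is a genuine critical point of $E_{\sigma_n}$: this is where one checks that the elliptic-function curve, which solves the scalar ODE \eqref{bonneq} for the signed curvature, actually lifts to a closed curve in $M$ solving \eqref{presque2}, using the Frenet equations in dimension $2$ and constant curvature exactly as in the derivation of \eqref{bonneq}. (4) Extract the $L^\infty$ and weak $W^{1,2}$ limit $u$: boundedness of $\{u_n\}$ in $W^{1,\infty}$ (arc-length parametrization, bounded length) gives this via Arzelà--Ascoli and Banach--Alaoglu, as in Step 2 of the proof of Theorem \ref{géodésique}. (5) Show $\dot u_n$ does not converge a.e.\ and the strict length inequality: since $\dot u_n$ is built from $\sn,\cn,\dn$ evaluated at arguments $\sim m(\sigma_n)\cdot(\text{period})$, equidistribution (irrationality of the relevant rotation numbers, exactly as the density of $\{\cos n\alpha\},\{\sin n\alpha\}$ was used on $S^2$) shows $\{\dot u_n(t)\}$ has no limit point off a null set, and then lower semicontinuity of length under weak $W^{1,2}$ convergence combined with $|\dot u_n|=1$, $|\dot u|<1$ a.e.\ on the relevant set yields $\mathfrak{L}(u)<\liminf \mathfrak{L}(u_n)$.

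The main obstacle I anticipate is step (1)--(3): making the parameter count work for a prescribed $\epsilon$ in the whole range $1\le 2\epsilon<2$ while simultaneously keeping $u_n$ a closed curve (the period of $k_\sigma^2$ must divide $L(\sigma)$, forcing the integrality constraint $L(\sigma)=2\sigma m(\sigma)C(\sigma)^{-1}\K(p(\sigma))$) and a bona fide critical point of $E_\sigma$ on $M$ rather than just on the model space. One must check that the function $p\mapsto \tfrac{2}{(2-p^2)\K(p)}\int_0^{\pi/2}\sqrt{1-p^2\sin^2 t}\,dt$ sweeps out the interval needed to realize every admissible $\epsilon$ — this is a monotonicity/continuity statement about complete elliptic integrals (it ranges over $(\tfrac12,1]$ as $p$ ranges over $[0,1)$, matching $1-\epsilon\in(0,\tfrac12]$). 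The rest is, as in the $S^2$ case, an exercise in bookkeeping and in invoking Riemann--Lebesgue and equidistribution; the passage to a general constant-curvature surface only requires replacing the great circle by a shortest geodesic and the ambient standard sphere by the local model, which is legitimate because the Euler--Lagrange equation \eqref{presque2} localizes near $\gamma$ and depends on the metric only through $K_M$.
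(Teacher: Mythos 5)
Your overall plan reproduces the construction correctly (step (1)--(4): the formula \eqref{formulecounter}, the periodicity bookkeeping, the parameter count via the complete elliptic integrals, and the $L^\infty$/$W^{1,2}$ extraction all follow the paper). But your step (5) — proving $\dot u_n\centernot\to\dot u$ a.e.\ and then the strict length inequality — takes a genuinely different and, I think, substantially harder route than the one in the paper.

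You propose to show the non-convergence directly by an equidistribution argument, ``exactly as the density of $\{\cos n\alpha\}$'' was used for $S^2$. That is clean when $p=0$ because the Jacobi functions degenerate to $\cos,\sin$, but for general $p\in(0,1)$ the tangent vector $\dot u_n$ is obtained by integrating $k_{\sigma_n}\sim\sigma_n^{-1}\dn_p(\,\cdot/\sigma_n,p)$, so its oscillation is governed by the Jacobi amplitude (a quasi-periodic but not trigonometric function); you would have to establish an equidistribution/density result for these arguments modulo the period of $\dn_p$, which is a non-trivial additional claim your proposal leaves open. The paper sidesteps this entirely: it proves the strict inequality $\mathfrak{L}(u)<\liminf\mathfrak{L}(u_n)$ by contradiction. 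Assuming equality, the arc-length normalisation $|\dot u_n|=1$ plus weak $W^{1,2}$ convergence forces $\dot u_n\to\dot u$ a.e., and then it passes to the limit in the Euler--Lagrange equation \eqref{geodegenere} using only a one-shot Riemann--Lebesgue argument (for the weak-$L^2$ limit of $\sigma_n^2\kappa^2(u_n)$, whose oscillating part is a bounded periodic function with rapidly scaling argument). This produces a closed geodesic of length $\epsilon(p)\beta(0)<\beta(0)$, which is impossible precisely because the proposition's hypothesis is that $\beta(0)$ is the length of the \emph{shortest} closed geodesic. The a.e.\ non-convergence of $\dot u_n$ then comes for free as a corollary of the strict inequality. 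Your proposal does not use the minimality hypothesis at all, which is the tell that you are taking a harder road: that hypothesis is exactly what makes the soft contradiction argument close.

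Two smaller remarks. First, your claim that $p\mapsto\tfrac{2}{(2-p^2)\K(p)}\int_0^{\pi/2}\sqrt{1-p^2\sin^2 t}\,dt$ ``ranges over $(\tfrac12,1]$'' is off: this quantity equals $1$ at $p=0$ and tends to $0$ as $p\to1$ (since $\K(p)\to\infty$ while the elliptic integral of the second kind stays bounded), so it ranges over $(0,1]$, which via $\epsilon=(1+Q)^{-1}$ does give $\epsilon\in[\tfrac12,1)$ as needed; the arithmetic is fine in the end but the intermediate statement is wrong. Second, the geometric preparatory work you describe (tubular neighbourhoods, model bands, checking the lift closes up) is indeed glossed over in the paper as well; your instinct that this deserves more care is reasonable and not a defect of your proposal relative to the source.
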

\begin{proof} We fix $0\leq p<1$ is fixed, and recall that $K_M\in\R$ is the Gauss curvature. We consider a sequence
$\ens{u_n}_{n\in\N}$ of critical points of $\ens{E_{\sigma_n}}_{n\in\N}$ given by \eqref{formulecounter}, where we chose $\ens{\sigma_n}_{n\in\N}$ and $\ens{m(\sigma_n)}_{n\in\N}$ such that
\begin{align*}
E_{\sigma_n}(u_n)\conv \beta(0)
\end{align*}
The sequence $\ens{u_n}_{n\in\N}$ is bounded in $\mathrm{W}^{1,2}(S^1,M)$, so we can extract a subsequence (still denotes $\ens{u_n}_{n\in\N}$) strongly converging in $\mathrm{L}^{\infty}(S^1,M)$, and weakly converging in $\mathring{W}^{1,2}(S^1,M)$\hspace{0.2em} to a function $u\in\mathrm{W}^{1,2}(S^1,M)$. 

For all interval $I\subset [0,L]$ such that $I\subset [0,L_n]$ for $n$ large enough, we have
\begin{align*}
\mathfrak{L}(u|_{I})^2=\left(\int_{I}|\dot{u}|d\leb^1\right)^2\leq |I|\int_{I}|\dot{u}|^2d\leb^1\leq |I|\liminf_{n\rightarrow\infty} \int_{I}|\dot{u}_n|^2d\leb^1
\end{align*}
and $\ens{u_n}_{n\in\N}$ is in arc-length parametrization, so 
\[
\mathfrak{L}(u_n)^2=|I|\int_{I}|\dot{u}_n|^2d\leb^1
\]
and
\begin{align}
\mathfrak{L}(u|_{I})\leq\liminf_{n\rightarrow\infty}\mathfrak{L}(u_n|_I).
\end{align}
Furthermore, we have
\begin{align}\label{longueurconvergepas}
\mathfrak{L}(u)<\liminf_{n\rightarrow\infty}\mathfrak{L}(u_n).
\end{align}
We prove this assertion by contradiction. If we have the equality in \ref{longueurconvergepas}, then $\ens{\dot{u}_n}_{n\in\N}$ converges almost everywhere to $\dot{u}$, and in particular,
\begin{align*}
L_n=\mathfrak{L}(u_n)\conv \mathfrak{L}(u)=L,
\end{align*}
and $u\in\im$, as we can pass to the limit in the arlength expression $|\dot{u}_n|=1$.
Thanks of the proof of theorem, for all $v\in\mathrm{W}^{2,2}_{u}$, if $v_n=P(u_n)v$ we have
\begin{align}\label{geodegenere}
\int_{0}^{L_n}\scal{\dot{u}_n}{D_tv_n}=-3\int_{0}^{L_n}\sigma_n^2\kappa^2(u_n)\scal{\dot{u}_n}{D_tv_n}+o(1)
\end{align}
and $\ens{D_tv_n}_{n\in\N}$ is bounded in $\mathrm{L}^{\infty}$, so $\ens{\scal{\dot{u}_n}{D_tv_n}}_{n\in\N}$ is bounded in $\mathring{L}^{\infty}$ and is a sequence of continuous functions, while $\ens{\sigma_n^2\kappa^2(u_n)}_{n\in\N}$ converges weakly in $\mathring{L}^{2}$ to $1$. Indeed, 
\begin{align*}
\sigma_n^2\kappa^2(u_n)=\frac{2(1-2\sigma_n^2K_M)}{2-p^2}\left(1-\frac{p^2}{2}\right)+\frac{p^2(1-2\sigma_n^2K_M)}{2-p^2}\cn_p\left(\left(\frac{2(1-2\sigma_n^2K_M)}{2-p^2}\right)^{\frac{1}{2}}\frac{t}{\sigma}\right)
\end{align*}
and the last term converges weakly in $\mathrm{L}^2$ to $0$ according to Riemann-Lebesgue theorem. So we can pass to the limit in \eqref{geodegenere} to find that
\begin{align*}
\int_{0}^{L}\scal{\dot{u}}{D_tv}=-3\int_{0}^{L}\scal{\dot{u}}{D_tv}d\leb^1
\end{align*}
so $u$ is a closed geodesic. As we have chosen $\ens{\sigma_n}_{n\in\N}$, and $\ens{m(\sigma_n})$ such that
\begin{align*}
E_{\sigma_n}(u_n)\conv \beta(0),
\end{align*}
then 
\begin{align*}
L=\epsilon(p)\beta(0)=\left(1+\frac{4}{(2-p^2)\K(p)}\int_{0}^{\frac{\pi}{2}}\sqrt{1-p^2\sin^2(t)}dt\right)^{-1}\beta(0)<\beta(0)
\end{align*}
so $u$ is a non-trivial closed geodesic of length strictly inferior that the length of the shortest closed geodesic, which yields the desired contradiction. Finally as $0\leq p<1$, and
\begin{align*}
	\epsilon(p)\xrightarrow[p\rightarrow 0]{}\frac{1}{2},\quad \epsilon(p)\xrightarrow[p\rightarrow 1]{}1,
\end{align*}
this completes the proof of the proposition.
\end{proof}

\subsubsection{General Surfaces}

In the case of a general surface, we get
\begin{align}\label{Kgen}
k_\sigma(t)=\sigma^2\left(2\ddot{k}_{\sigma}(t)+k_{\sigma}^3(t)+2K\left(\dot{u}(t),\nu(t)\right)k_\sigma(t))\right)
\end{align}
where $K(\dot{u}(t),\nu(t))$ is the sectional curvature of the $2$-plan $\dot{u}(t)\wedge \nu(t)$. Let $K_M^{+}$ (resp. $K_M^{-}$) the maximum (resp. minimum) of sectional curvature of $(M,g)$. If $\mathrm{s}$ is the sign function
\begin{align*}
&\ddot{k}_\sigma(t)+\frac{1}{2}k_\sigma^3(t)+\left(K_M^{\mathrm{s}(k_\sigma(t))}-\frac{1}{2\sigma^2}\right)k_\sigma(t)\geq 0\\
&\ddot{k}_\sigma(t)+\frac{1}{2}k_\sigma^3(t)+\left(K_M^{-\mathrm{s}(k_\sigma(t))}-\frac{1}{2\sigma^2}\right)k_\sigma(t)\leq 0
\end{align*}
If we write $\displaystyle
C(\sigma,K)=\left(\frac{1-2\sigma^2K}{2(2-p(\sigma)^2)}\right)
$, an elementary application of comparison principle implies that there exists a solution $k_\sigma$ of \eqref{Kgen} such that
\begin{equation}\label{comparaison}
\frac{2}{\sigma}C(\sigma,K_M^+)\dn\left(C(\sigma,K_M^+)\frac{t}{\sigma},p(\sigma)\right)\leq k_\sigma(t)\leq \frac{2}{\sigma}C(\sigma,K_M^-)\dn\left(C(\sigma,K_M^-)\frac{t}{\sigma},p(\sigma)\right)
\end{equation}

Furthermore, thanks of a result of Joel Langer and David A. Singer (\cite{LZ1}), we can extend this procedure to get counter-examples in any dimension $m\geq 2$.
Finally inequality \eqref{comparaison} permits to extend the result of the general result of the previous subsection.
	\begin{prop}
		If $(M^2,g)$ is a Riemannian surface, for all $\beta>0$, for all $1<2\epsilon<2$, there exists a sequence of positive numbers $\ens{\sigma_n}_{n\in\N}$ converging to $0$, and a sequence of critical points $\ens{u_n}_{n\in\N}$ associated to $\ens{E_{\sigma_n}}_{n\in\N}$ such that
		\begin{align*}
		E_{\sigma_n}(u_n)\conv \beta,\quad \mathfrak{L}(u_n)\conv \epsilon \beta
		\end{align*}
		and
		\begin{align*}
		u_n\conv[\mathrm{L}^\infty] u\quad\text{strongly},\quad \text{and}\quad u_n\xrightarrow[n\rightarrow\infty]{\mathrm{W}^{1,2}} u\quad\text{weakly},\quad \dot{u}_n\centernot{\xrightarrow[n\rightarrow\infty]{}} \dot{u} \quad \text{a.e.}
		\end{align*}
		and
		\begin{align*}
		\mathfrak{L}(u)<\liminf_{n\rightarrow\infty}\mathfrak{L}(u_n).
		\end{align*}
	\end{prop}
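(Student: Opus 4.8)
The plan is to run the constant--curvature argument of the previous subsection, replacing the explicit Jacobi--elliptic critical points by solutions of the scalar equation \eqref{Kgen} furnished by the comparison estimate \eqref{comparaison}, and tuning the number of periods so that the total energy converges to the prescribed value $\beta$. First I would fix the shape parameter: writing
\[
\epsilon(p)=\left(1+\frac{4}{(2-p^2)\K(p)}\int_{0}^{\frac{\pi}{2}}\sqrt{1-p^2\sin^2 t}\,dt\right)^{-1},
\]
which is continuous on $[0,1[$ with $\epsilon(p)\to\frac12$ as $p\to0$ and $\epsilon(p)\to1$ as $p\to1$ (as already observed), I may pick $p\in{}]0,1[$ with $\epsilon(p)=\epsilon$ — possible since $\frac12<\epsilon<1$ — and set $L=\epsilon\beta$. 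It is precisely the need for $p>0$, so that \eqref{comparaison} produces a genuinely oscillating (hence closeable) solution of \eqref{Kgen} on a non--constant--curvature surface, that forces the strict inequality $1<2\epsilon$.

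Second, the construction of the critical points. In dimension $2$ an arclength critical point $u$ of $E_\sigma$ is determined, up to an ambient isometry, by its signed geodesic curvature $k_\sigma$ through the Frénet system $D_t\dot u=k_\sigma\nu$, $D_t\nu=-k_\sigma\dot u$, with $k_\sigma$ solving \eqref{Kgen}. The comparison principle behind \eqref{comparaison} gives, for each small $\sigma>0$, a periodic solution $k_\sigma$ of \eqref{Kgen} trapped between the two model functions $\tfrac{2}{\sigma}C(\sigma,K_M^{\pm})\dn\!\big(C(\sigma,K_M^{\pm})\tfrac t\sigma,p\big)$; since $C(\sigma,K)$ tends to $(2(2-p^2))^{-1/2}$ as $\sigma\to0$ \emph{independently} of $K$, the minimal period $T(\sigma)$ of $k_\sigma$ satisfies $T(\sigma)/\sigma\to 2\sqrt{2(2-p^2)}\,\K(p)$. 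Reconstructing $u_\sigma$ from $k_\sigma$ and closing it up after $m(\sigma)$ periods yields a closed curve of length $m(\sigma)\,T(\sigma)$; taking $\sigma_n\downarrow0$ and $m(\sigma_n)=\lceil L/T(\sigma_n)\rceil$ (so $m(\sigma_n)\to\infty$) produces critical points $u_n:=u_{\sigma_n}$ of $E_{\sigma_n}$ with $L_n:=\mathfrak{L}(u_n)\to L=\epsilon\beta$.

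Third, the limits and the convergence statements, which are routine once the curves are in hand. As in \eqref{degenerescence} — using \eqref{comparaison} to squeeze the general--surface curvature integral between the two constant--curvature ones, which share the same limit because $C(\sigma,K_M^{\pm})$ do — one gets
\[
\sigma_n^2\int_{S^1}\kappa^2(u_n)|\dot u_n|\,d\leb^1\conv\frac{4L}{(2-p^2)\K(p)}\int_{0}^{\frac{\pi}{2}}\sqrt{1-p^2\sin^2 t}\,dt=L\left(\tfrac1\epsilon-1\right)=(1-\epsilon)\beta,
\]
hence $E_{\sigma_n}(u_n)=L_n+\sigma_n^2\int_{S^1}\kappa^2(u_n)|\dot u_n|\,d\leb^1\conv\epsilon\beta+(1-\epsilon)\beta=\beta$ and $\mathfrak{L}(u_n)\conv\epsilon\beta$. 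Since $|\dot u_n|=1$ and $|D_t\dot u_n|=|k_{\sigma_n}|\sim\sigma_n^{-1}$, each $u_n$ lies in a geodesic ball of radius $O(\sigma_n)$ about some $x_n\in M$; after extraction $x_n\to x_\infty$, so $u_n\conv[\mathrm{L}^\infty] u\equiv x_\infty$ in $\mathrm{L}^\infty(S^1,M)$, $\ens{u_n}_{n\in\N}$ is bounded in $\mathrm{W}^{1,2}(S^1,M)$, and by Riemann--Lebesgue applied to the unit tangent rotating with frequency tending to $\infty$ we get $\dot u_n\rightharpoonup0=\dot u$ weakly in $\mathrm{L}^2$, whence $u_n\rightharpoonup u$ weakly in $\mathrm{W}^{1,2}$; the same oscillation shows $\ens{\dot u_n(t)}_{n\in\N}$ has no limit point for a.e.\ $t$, so $\dot u_n\centernot{\xrightarrow[n\rightarrow\infty]{}}\dot u$ a.e., and finally $\mathfrak{L}(u)=0<\epsilon\beta=\liminf_{n\rightarrow\infty}\mathfrak{L}(u_n)$, which is the asserted strict inequality.

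The step I expect to be the genuine obstacle is the second one: \eqref{Kgen} is not a scalar ODE with prescribed coefficients, since the sectional curvature $K(\dot u(t),\nu(t))$ depends on the very curve one is building, so the comparison principle alone does not immediately yield a \emph{closed} solution. Turning it into a bona fide critical curve requires either a fixed--point/perturbation scheme around the constant--curvature Jacobi solution — legitimate because the curve is confined to an $O(\sigma)$--ball, on which $K$ is almost constant — or a shooting argument enforcing periodicity, together with enough quantitative control on $T(\sigma)$ to hit the target length $L=\epsilon\beta$ in the limit. Everything after that parallels the previous subsection and Theorem \ref{géodésique}.
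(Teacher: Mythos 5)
The paper's own treatment of this proposition is essentially a one-line appeal to the comparison inequality \eqref{comparaison} — it states that this inequality ``permits to extend the result of the previous subsection'' and stops there. Your proposal therefore goes considerably further than what the paper records, and its outline (fix $p$ by solving $\epsilon(p)=\epsilon$; use \eqref{Kgen} and \eqref{comparaison}; reconstruct the curve by Frénet; take $m(\sigma)\to\infty$ copies of a period; pass to the limit as in \eqref{degenerescence}) is exactly the intended route. Your arithmetic $\sigma_n^2\int\kappa^2(u_n)|\dot u_n|\conv L(\tfrac1\epsilon-1)=(1-\epsilon)\beta$, hence $E_{\sigma_n}(u_n)\to\beta$, checks out, and the observation that $|\dot u_n|=1$, $|k_{\sigma_n}|\sim\sigma_n^{-1}$ forces $u_n$ into an $O(\sigma_n)$-geodesic ball is the right mechanism for the constant $\mathrm{L}^\infty$ limit — consistent with the explicit $S^2$ example where $u\equiv(0,0,1)$.

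You are also right to flag the gap, and it is a real one that the paper silently leaves open. A ``comparison principle'' gives a priori bounds on solutions of \eqref{Kgen}, not existence of a periodic solution, and \eqref{Kgen} is a coupled problem: in dimension $2$, $K(\dot u(t),\nu(t))=K_M(u(t))$ depends on the position of the curve one is trying to build from $k_\sigma$ via the Frénet system. Even granted a $T(\sigma)$-periodic solution $k_\sigma$, the reconstructed curve need not close up after $m$ periods — the Frénet flow on the unit tangent bundle must return to its starting point, a holonomy condition that is automatic in constant curvature (by symmetry) but not in general. Your proposed remedies (a perturbation/implicit-function argument around the constant-curvature Jacobi solution, using that $K_M$ is $O(\sigma)$-close to a constant on the $O(\sigma)$-ball where the curve lives, or a shooting argument to enforce closure) are the natural routes, but neither is carried out in the paper, so this step is genuinely missing on both sides. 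Two smaller remarks: you justify the strict inequality $1<2\epsilon$ by appeal to $p>0$ being needed for oscillation; a more direct reading is that $\epsilon(p)$ ranges over $(\tfrac12,1)$ as $p$ ranges over $(0,1)$, so $\epsilon\le\tfrac12$ is simply unreachable by this family. And your claim that $\ens{\dot u_n(t)}$ has \emph{no limit point} a.e.\ is stronger than the stated conclusion $\dot u_n\centernot{\to}\dot u$ a.e.\ and would need a genuine equidistribution argument on a general surface (the paper proves it on $S^2$ via irrationality of the rotation); it is safer to argue only that a.e.\ pointwise convergence fails, which already follows from the strict length inequality combined with $|\dot u_n|\equiv1$ and $|\dot u|<1$ a.e.
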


\subsection{Counter-examples in Dimension $2$}

Thanks of an article of Pinkall (see \cite{pinkall}), if $u$ is a critical point of $E_\sigma$, then thanks of the Hopf fibration, we can create an Hopf torus which is a critical point of the Willmore energy. We will take slightly different conventions than the article of Pinkall. Let
 $p:S^3\rightarrow S^2$ the map defined by 
\[
p(w,z)=(|w|^2-|z|^2,2w\bar{z})
\] for all $(w,z)\in S^3$, where \[S^3=\C^2\cap\ens{(w,z): |w|^2+|z|^2=1}
\]
We recall that $p$ is surjective, and we see that it is invariant by the action of $S^1$ by rotation.
It will be convenient for computations to use quaternions for writing Hopf fibration. Le $q\mapsto \tilde{q}$ is the quaternionic  automorphism such that $q$ leaves $1$, $j$ and $k$ unchanged, and which sends $i$ to $-i$. It is easy to verify that the Hopf fibration is given by
\begin{align*}
p(q)=\widetilde{q}q
\end{align*}
if we identify $q=(w,z)\in S^3$. 

Let $\gamma$ a curve $\gamma:[0,L]\rightarrow S^2$ a closed curve of length $L>0$. Let $\Gamma$ a lift of $\gamma$ by the fibration $p$, \textit{i.e.} a curve $\Gamma :[0,L]\rightarrow S^3$ such that $p\circ \Gamma=\gamma$. We now parametrize $\Gamma$ by arclength, and we defined the Hopf torus of $\gamma$ as 
\begin{align*}
\Gamma(t,\theta)=e^{i\theta}\Gamma(t)
\end{align*}
and assume that $t\mapsto \Gamma'(t)$ is orthogonal to $\partial_\theta\Gamma$. This implies that $\Gamma'$ is proportional to $\Gamma$, there exist a smooth function $\lambda$ on $[0,L]$ such that
\begin{align*}
\Gamma'(t)=\lambda(t)\Gamma(t)
\end{align*}
as $\lambda$ is orthogonal to $e^{i\theta}$ for all $\theta\in S^1$, so $\lambda\in\mathrm{Span}(j,k)$. 
To produce the counter-example, we now proceed with the derivation of the mean curvature of the Hopf torus $\Gamma$.

We have
\begin{align*}
\dot{\gamma}=2\widetilde{\Gamma}\lambda\Gamma.
\end{align*}
so $|\dot{\gamma}|=2$. We should be now careful that $\gamma:[0,\frac{L}{2}]\rightarrow S^2$, and $\Gamma:[0,\frac{L}{2}]\times S^1\rightarrow S^3$. We can easily check that $\mathfrak{n}:[0,\frac{L}{2}]\rightarrow S^3$ is a unit normal vector field to the surface $\Gamma$, if 
\begin{align*}
\mathfrak{n}(t,\theta)=ie^{i\theta}\lambda(t)\Gamma(t).
\end{align*}
If we define the function $\kappa$ by the formula 
\begin{align*}
\lambda'=2i\kappa\lambda,
\end{align*}
then 
\begin{align*}
\left\{
\begin{aligned}
\partial_t\mathfrak{n}(t,\theta)&=-2\kappa(t)\partial_t\Gamma(t,\theta)-\partial_\theta\Gamma(t,\theta)\\
\partial_\theta\mathfrak{n}(t,\theta)&=-\partial_t\Gamma(t,\theta)
\end{aligned}\right.
\end{align*}
and $\kappa$ is also the curvature of the curve $\gamma$. Indeed, 
\begin{align*}
\ddot{\gamma}=2\widetilde{\Gamma}\lambda'\Gamma-4\gamma
\end{align*}
so
\begin{align*}
\nabla_{\frac{\dot{\gamma}}{|\dot{\gamma}|}}\frac{\dot{\gamma}}{|\dot{\gamma}|}=\frac{1}{4}4\kappa\widetilde{\Gamma}i\lambda\Gamma=\kappa \nu
\end{align*}
if $\nu$ is the normal of $\gamma$. If we had taken in the beginning a curve parametrized by arclength, then, if  we write $\gamma_0$ the new curve in the arclength parametrization of $\Gamma$,
\begin{align*}
\gamma_0(t)=\gamma(2t),
\end{align*}
and if we now write the curvature with the original curve, we get
\begin{equation}
\left\{
\begin{aligned}
\partial_t\mathfrak{n}(t,\theta)&=-2\kappa(t)\partial_t\Gamma(t,\theta)-\partial_\theta\Gamma(t,\theta)\\
\partial_\theta\mathfrak{n}(t,\theta)&=-\partial_t\Gamma(t,\theta)
\end{aligned}\right.
\end{equation}
The mean curvature is defined as
\begin{align*}
H(t,\theta)=\frac{1}{2}\mathrm{Tr}\,d\mathfrak{n}(t,\theta)
\end{align*}
and the Gaussian curvature by
\begin{align*}
K(t,\theta)=\det d\mathfrak{n}(t,\theta).
\end{align*}
With the new convention about $\kappa$, we have
\begin{align*}
H(t,\theta)=\kappa(2t),\quad K(t,\theta)=-1.
\end{align*}
We now define the Willmore $\sigma$-energy, by
\begin{align*}
W_{\sigma}(\ffg)=\int_{\Sigma}(1+\sigma^2|H_{\ffg}|^2)d\vg
\end{align*}
is $H_{\ffg}$ is a the average of the principal curvature of an immersion $\ffg$ from a Riemannian surface $\Sigma$ in $S^3$.
Then $\ffg$ is a critical point of $\ffg$ if and only if
\begin{align}\label{willmore}
2H=\sigma^2(\Delta_g H+2H(H^2-2K))
\end{align}
if $\Delta_g$ is the Laplace operator for the metric $g$ induced by $\ffg$ on $\Sigma$ by the metric of $S^3$, and $K$ is the Gauss curvature. 
As $|\partial_t\Gamma|=|\partial_\theta\Gamma|=1$, and $\partial_t\Gamma$ is orthogonal to $\partial_\theta\Gamma$, we have
\begin{align*}
\Delta_gH(t,\theta)=\frac{d^2}{dt^2}\kappa(2t)=4\ddot{\kappa}(2t)
\end{align*}
so \eqref{willmore} is equivalent to
\begin{align*}
2\kappa(2t)=\sigma^2(4\ddot{\kappa}(2t)+2\kappa(2t)^3+4\kappa(2t))
\end{align*}
which is equivalent to
\begin{align*}
\kappa=\sigma^2(\ddot{\kappa}+2\kappa^3+2\kappa).
\end{align*}
This last expression is nothing else than \eqref{presque2}, so $\Gamma$ is a critical point of $W_\sigma$ if and only if $\gamma$ is a critical  point of $E_\sigma$. And
\begin{align*}
W_\sigma(\Gamma)=\int_{[0,\frac{L}{2}]}(1+\sigma^2\kappa(2t))dt\,d\theta=\pi E_\sigma(\gamma).
\end{align*}
Furthermore, the second fundamental form $|\mathbb{I}|^2$ is equal to $2+4\kappa^2$, so
\begin{align*}
A_{\sigma}(\Gamma)&=\int_{\Gamma}(1+\sigma^2|\mathbb{I}_\Gamma|^2)d\mathrm{vol}_g=\int_{[0,\frac{L}{2}]\times S^1}(1+2\sigma^2+4\sigma^2\kappa^2(2t))dtd\theta\\&=(1+2\sigma^2)\pi E_{\sigma'}(\gamma),\quad \sigma'=\frac{2\sigma}{\sqrt{1+2\sigma^2}}.
\end{align*}
and as we can show that if $|\mathbb{I}_\Gamma|^2$ depends only of $H$, $\Gamma$ is a critical point of $A_{\sigma}$ if and only if it is a critical point of $W_{\sigma'}$, every $1$-dimension elliptic Jacobi function constructed in the preceding section raises to a critical point of $A_\sigma$.

\begin{prop}
	For all $\beta>0$, there exists a sequence $\ens{\sigma_n}_{n\in\N}$ of positive real numbers converging to $0$, a sequence of flat torii $\ens{T_n^2}_{n\in\N}$ converging to a torus $T^2$, and a sequence $\{\ffg_n:T_n^2\rightarrow S^3\}_{n\in\N}$ of conformal immersions which are critical points associated to $\ens{A_{\sigma_n}}_{n\in\N}$, such that
	\begin{align*}
		\lim_{n\rightarrow\infty}A_{\sigma_n}(\ffg_n)=\beta,\quad \lim_{n\rightarrow\infty}\h^2(\ffg_n(T^2_n))= \frac{\beta}{2},
	\end{align*}
	and $\{\ffg_n\}_{n\in\N}$ weakly converges to a limiting map $\ffg\in \mathrm{W}^{1,2}(T^2,S^3)$, but $\{\ffg_n\}_{n\in\N}$ nowhere strongly converges;  for all open subset $U\subset T^2$
	\begin{align*}
		\h^2(\ffg(T^2\cap U))<\liminf_{n\rightarrow\infty}\h^2(\ffg_n(T_n^2\cap U)).
	\end{align*}
\end{prop}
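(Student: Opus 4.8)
The plan is to transport the one-dimensional counter-examples of the previous subsection to $S^3$ through the Pinkall correspondence set up above. Recall what is at our disposal: to each closed arclength-parametrized curve $\gamma$ on the round $S^2$ there is associated a flat torus $T_\gamma=(\R/\tfrac12\mathfrak{L}(\gamma)\Z)\times(\R/2\pi\Z)$ (possibly skew, through the holonomy of $\gamma$) and a conformal immersion $\ffg_\gamma:T_\gamma\to S^3$, its Hopf torus, with $\mathrm{vol}_{g_{\ffg_\gamma}}(T_\gamma)=\pi\,\mathfrak{L}(\gamma)$, $A_\sigma(\ffg_\gamma)=(1+2\sigma^2)\,\pi\,E_{\sigma'}(\gamma)$ for $\sigma'=2\sigma/\sqrt{1+2\sigma^2}$, and, since $\chi(T^2)=0$, the property that $\ffg_\gamma$ is a critical point of $A_\sigma$ if and only if $\gamma$ is a critical point of $E_{\sigma'}$.

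First I would fix an arbitrary sequence $\sigma_n\to0^+$, put $\tau_n=2\sigma_n/\sqrt{1+2\sigma_n^2}\to0^+$, and feed into the Hopf construction the explicit critical curves built in the proof of Proposition \ref{countersphere}: take $u_n=u_{\tau_n}$, the $m_n$-fold covering of a small latitude circle $C_n$ about a point $p_0\in S^2$, with winding number $m_n=\big\lfloor\beta/(4\pi^2\tau_n)\big\rfloor$, so that $\mathfrak{L}(u_n)=\tfrac{2\pi\tau_n m_n}{(1-2\tau_n^2)^{1/2}}\to\beta/(2\pi)$ and hence $E_{\tau_n}(u_n)=2(1-\tau_n^2)\mathfrak{L}(u_n)\to\beta/\pi$, with $u_n\to p_0$ in $L^\infty$, $u_n\rightharpoonup p_0$ weakly in $\mathrm{W}^{1,2}$, and $\dot u_n$ possessing no limit point off a Lebesgue-null set (exactly as in Proposition \ref{countersphere}). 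Setting $\ffg_n:=\ffg_{u_n}:T_n^2\to S^3$ I then read off at once: $\ffg_n$ is a conformal immersion and a critical point of $A_{\sigma_n}$; $A_{\sigma_n}(\ffg_n)=(1+2\sigma_n^2)\pi E_{\tau_n}(u_n)\to\beta$; $\h^2(\ffg_n(T_n^2))=\mathrm{vol}_{g_{\ffg_n}}(T_n^2)=\pi\,\mathfrak{L}(u_n)\to\beta/2$; and $T_n^2\to T^2:=(\R/\tfrac{\beta}{4\pi}\Z)\times(\R/2\pi\Z)$ in moduli, the skew (equal to $m_n$ times the area enclosed by $C_n$, i.e.\ $O(\tau_n)$) tending to $0$.

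Next I would analyse the limit. Reparametrizing each $T_n^2$ affinely onto the fixed torus $T^2$ and using $|\partial_t\ffg_n|=|\partial_\theta\ffg_n|=1$, the reparametrized maps are uniformly Lipschitz and bounded in $\mathrm{W}^{1,2}(T^2,S^3)$, so a subsequence converges uniformly and weakly in $\mathrm{W}^{1,2}$ to some $\ffg\in\mathrm{W}^{1,2}(T^2,S^3)$. The decisive observation is that the Hopf $S^1$-action preserves the fibres of $p$, so $p\circ\ffg_n=u_n(2\,\cdot)\to p_0$ uniformly, whence $\ffg(T^2)\subseteq p^{-1}(p_0)$, a single Hopf fibre, that is a great circle; in particular $\h^2(\ffg(T^2\cap U))=0$ for every open $U\subseteq T^2$. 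The failure of strong convergence on every open set comes from the same rapid oscillation responsible for it in dimension $1$: writing $\partial_t\ffg_n=\lambda_n\ffg_n$ with $\lambda_n\in\mathrm{Span}(j,k)$ a unit quaternion satisfying $\lambda_n'=2i\,k(u_n)\,\lambda_n$ and $|k(u_n)|=(1-2\tau_n^2)^{1/2}/\tau_n\to\infty$, the $\lambda_n(t)$ rotate in the $(j,k)$-plane at frequency $\to\infty$, so for $t$ outside a null set $\{\lambda_n(t)\}_n$, hence $\{\partial_t\ffg_n(t,\theta)\}_n$, is dense on a circle and has no limit point (the same elementary fact about $\{\cos(n\alpha)\}_n$ used in Proposition \ref{countersphere}), which rules out $L^2$-convergence of $\partial_t\ffg_n$ on any open subset of $T^2$. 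Finally, for every open $U\subseteq T^2$,
\[
\h^2(\ffg(T^2\cap U))=0<\leb^2(U)=\liminf_{n\to\infty}\h^2(\ffg_n(T_n^2\cap U)),
\]
the right-hand side being the area of the immersion $\ffg_n$ over the region of $T_n^2$ matching $U$, which converges to $\leb^2(U)>0$ since the induced metrics $g_{\ffg_n}$ are flat and $T_n^2\to T^2$; this is the claimed strict loss of semicontinuity.

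The one genuine obstacle — apart from routine bookkeeping with the varying domains, which is handled by reparametrizing onto a fixed torus — is conceptual: the curves $u_n$ have geodesic curvature of fixed sign and of order $1/\tau_n$, so by Gauss--Bonnet they cannot be simple, the Hopf tori $\ffg_n$ are not embedded, and their image sets (small Clifford-type tori collapsing onto a circle) carry vanishing Hausdorff measure. It is therefore the area of the immersion $\ffg_n$ — equivalently the mass of the associated integral $2$-varifold — that must be meant by $\h^2(\ffg_n(T_n^2))$, and it is this varifold area that both tends to $\beta/2$ and, over every subregion, stays bounded away from the vanishing area of the degenerate weak limit $\ffg$.
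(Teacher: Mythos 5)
Your proof follows the same route as the paper: lift the one--dimensional degenerate family from Proposition~\ref{countersphere} through the Pinkall correspondence $\gamma\mapsto\ffg_\gamma$ built just above, use $|\partial_t\ffg_n|=|\partial_\theta\ffg_n|=1$ to get a $\mathrm{W}^{1,2}$ bound and a weak limit, and import the strict loss of area from the one--dimensional strict inequality \eqref{longueurconvergepas}. The paper's own proof is essentially a one--paragraph sketch doing exactly this (with $T_n=\R^2/(a_n+2\pi i)\Z^2$, $a_n=L_n/2$), so you have supplied considerably more detail but no new idea; the one genuinely useful thing you add is the observation that, since the underlying curves are $m_n$-fold covers of a small circle (forced by Gauss--Bonnet), the Hopf tori $\ffg_n$ are non-embedded and $\h^2(\ffg_n(T_n^2))$ must be read as the parametrized area $\int_{T_n^2}d\mathrm{vol}_{g_{\ffg_n}}$ (equivalently the mass of the integral $2$-varifold pushed forward by $\ffg_n$), not the set-theoretic Hausdorff measure of the image, which collapses to $0$ -- a notational abuse the paper leaves implicit.
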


\begin{proof}
	The proof is now an easy consequence of \eqref{longueurconvergepas}, as we can lift the degenerate family of critical $\ens{u_n}_{n\in\N}$ constructed in the preceding subsection in a family of immersions $\{\ffg_n\}_{n\in\N}$ which are critical points of $\ens{A_{\sigma_n}}$\footnote{Changing the $\sigma_n$ of the $1$-dimensional counter-example into $\sigma_n'$.}, where for all $n\in\N$, 
	\begin{align*}
	T_n=[0,a_n]\times S^1=\R^2/\left(a_n+2\pi i\right)\Z^2\quad \left(a_n=\frac{L_n}{2}\right)
	\end{align*}
	As $L_n\conv L$, and the lifted curves are conformal immersions $\{\ffg_n\}_{n\in\N}$ such that
	\begin{align*}
		|\partial_t\ffg_n|=|\partial_\theta\ffg_n|=1,
	\end{align*}
	this sequence is bounded in $\mathrm{W}^{1,2}(S^1,M)$, so converges weakly to an element $\ffg\in\mathrm{W}^{1,2}(S^1,M)$, and thanks of proposition \ref{countersphere}, for all open subset $U\subset T^2$
\begin{align*}
\h^2(\ffg(T^2\cap U))<\liminf_{n\rightarrow\infty}\h^2(\ffg_n(T_n^2\cap U)),
\end{align*}
which concludes the proof.
\end{proof}

\newpage

\nocite*{}
\bibliographystyle{alpha}
\bibliography{bibliography2}

\end{document}